\renewcommand{\cftsecleader}{\cftdotfill{\cftdotsep}}
\numberwithin{figure}{section}
\definecolor{blue2}{cmyk}{.94,.11,0,0}
\definecolor{myblue}{rgb}{.8, .8, 1}
\newlength\mytemplen
\newsavebox\mytempbox
\DeclareFontFamily{U}{mathx}{}
\DeclareFontShape{U}{mathx}{m}{n}{<-> mathx10}{}
\DeclareSymbolFont{mathx}{U}{mathx}{m}{n}
\DeclareMathAccent{\widecheck}{0}{mathx}{"71}
\DeclareMathAlphabet\mathbfcal{OMS}{cmsy}{b}{n}
\renewenvironment{thebibliography}[1]
     {\section*{\refname}
      \@mkboth{\MakeUppercase\refname}{\MakeUppercase\refname}
      \begin{enumerate}[label={[\arabic{enumi}]},itemindent=*,leftmargin=2.5em]
      \@openbib@code
      \sloppy
      \clubpenalty4000
      \@clubpenalty \clubpenalty
      \widowpenalty4000
      \sfcode`\.\@m}
     {\def\@noitemerr
       {\@latex@warning{Empty `thebibliography' environment}}
      \end{enumerate}}
\newcommand{\newparallel}{\mathrel{\mathpalette\new@parallel\relax}}
\newcommand{\new@parallel}[2]{
  \begingroup
  \sbox\z@{$#1T$}
  \resizebox{!}{\ht\z@}{\raisebox{\depth}{$\m@th#1/\mkern-4.5mu/$}}
  \endgroup
}
\newcommand{\nnewparallel}{\mathrel{\mathpalette\nnew@parallel\relax}}
\newcommand{\nnew@parallel}[2]{
  \begingroup
  \sbox\z@{$#1T$}
  \resizebox{!}{\ht\z@}{\raisebox{\depth}{$\m@th#1/\mkern-4.5mu/\!\!\!\!\backslash $}}
  \endgroup
}
\newcommand\cb{
    \@ifnextchar[
       {\@cb}
       {\@cb[5pt]}}
\def\@cb[#1]{
    \@ifnextchar[
       {\@@cb[#1]}
       {\@@cb[#1][5pt]}}
\def\@@cb[#1][#2]#3{
    \sbox\mytempbox{#3}
    \mytemplen\ht\mytempbox
    \advance\mytemplen #1\relax
    \ht\mytempbox\mytemplen
    \mytemplen\dp\mytempbox
    \advance\mytemplen #2\relax
    \dp\mytempbox\mytemplen
    \colorbox{myblue}{\hspace{1em}\usebox{\mytempbox}\hspace{1em}}}
\newcommand{\less}{\lesssim}
\newcommand{\more}{\gtrsim}
\newcommand{\e}{{\mathrm e}}
\newcommand{\1}{\mathds 1}
\newcommand{\C}{{\mathscr C}}
\newcommand{\F}{\mathscr F}
\newcommand{\B}{\mathscr B}
\newcommand{\vep}{{\varepsilon}}
\newcommand{\da}{{\downarrow}}
\newcommand{\ua}{{\uparrow}}
\newcommand{\bs}{\boldsymbol}
\newcommand{\ms}{\mathscr}
\renewcommand{\P}{{\mathbb P}}
\newcommand{\E}{{\mathbb E}}
\newcommand{\mc}{\mathcal}
\renewcommand{\d}{{\mathrm d}}
\newcommand{\R}{{\Bbb R}}
\newcommand{\s}{{\mathfrak s}}
\renewcommand{\i}{{\mathtt i}}
\newcommand{\defeq}{{\stackrel{\rm def}{=}}}
\newcommand{\EN}{{\mathcal E_N}}
\renewenvironment{proof}[1][\proofname]{\noindent {\bfseries #1.}\;}{\hfill\ensuremath{\blacksquare}\\}
\newcommand{\EM}{\gamma_{\mathsf E\mathsf M}}
\newcommand{\cc}{{^\circ}}
\newcommand{\bi}{\mathbf i}
\newcommand{\bj}{\mathbf j}
\newcommand{\bk}{\mathbf k}
\newcommand{\hK}{{\widehat{K}}}
\newcommand{\two}{{\sqrt{2}}}
\newcommand{\bbeta}{{\bs \beta}}
\renewcommand{\sp}{{\shortparallel}}
\newcommand{\nsp}{{\nshortparallel}}
\newcommand{\CN}{{\mathbb C}^{\it N}}
\newcommand{\interact}{\leftrightsquigarrow}
\newcommand{\Tw}{T^{\mathcal E_N}}
\newcommand{\Twi}
{T^{\bi\complement}}
\newcommand{\TwJm}
{T^{\bs J_m\complement}}
\newcommand{\CNw}{{\CN_{\,\sp}}}
\newcommand{\CNwi}{{\CN_{\bi\,\nsp,\,\bi\complement\,\sp}}}
\newcommand{\CNwia}{{\CN_{\bi_1\,\nsp,\,\bi_1\complement\,\sp}}}
\newcommand{\CNwib}{{\CN_{\bi_{m}\,\nsp,\,\bi_m\complement\,\sp}}}
\newcommand{\CNwid}{{\CN_{\bi_{\ell+1}\,\nsp,\,\bi_{\ell+1}\complement\,\sp}}}
\newcommand{\CNwie}{{\CN_{\bi_{\ell}\,\nsp,\,\bi_{\ell}\complement\,\sp}}}
\newcommand{\CNwj}{{\CN_{\bj\,\nsp,\,\bj\complement\,\sp}}}
\newcommand{\CNwJm}{{\CN_{\bs J_m\,\nsp,\,\bs J_m\complement\,\sp}}}
\newcommand{\CNwni}{{\CN_{\bi\complement\,\sp}}}
\newcommand{\wt}{\widetilde}
\newcommand{\bb}{\backslash\!\backslash}
\newcommand{\bbb}{\backslash\!\backslash\!\backslash}
\patchcmd{\thebibliography}{\section*}{\section}{}{}
\newtheoremstyle{slantthm}{10pt}{10pt}{\slshape}{}{\bfseries}{}{.5em}{\thmname{#1}\thmnumber{ #2}\thmnote{ (#3)}.}
\newtheoremstyle{slantrmk}{10pt}{10pt}{\rmfamily}{}{\bfseries}{}{.5em}{\thmname{#1}\thmnumber{ #2}\thmnote{ (#3)}.}
\begin{document}
\theoremstyle{slantthm}
\newtheorem{thm}{Theorem}[section]
\newtheorem{prop}[thm]{Proposition}
\newtheorem{lem}[thm]{Lemma}
\newtheorem{cor}[thm]{Corollary}
\newtheorem{defi}[thm]{Definition}
\newtheorem{claim}[thm]{Claim}
\newtheorem{disc}[thm]{Discussion}
\newtheorem{conj}[thm]{Conjecture}

\theoremstyle{slantrmk}
\newtheorem{ass}[thm]{Assumption}
\newtheorem{rmk}[thm]{Remark}
\newtheorem{eg}[thm]{Example}
\newtheorem{question}[thm]{Question}
\numberwithin{equation}{section}
\newtheorem{quest}[thm]{Quest}
\newtheorem{problem}[thm]{Problem}
\newtheorem{discussion}[thm]{Discussion}
\newtheorem{notation}[thm]{Notation}
\newtheorem{observation}[thm]{Observation}

\definecolor{db}{RGB}{13,60,150}
\definecolor{dg}{RGB}{150,40,40}

\newcommand{\thetitle}{
\vspace{-1cm}
Stochastic motions of the two-dimensional many-body delta-Bose gas, III: Path integrals}

\title{\bf \thetitle\footnote{Support from an NSERC Discovery grant is gratefully acknowledged.}}

\author{Yu-Ting Chen\,\footnote{Department of Mathematics and Statistics, University of Victoria, British Columbia, Canada.}\,\,\footnote{Email: \url{chenyuting@uvic.ca}}}

\date{\today}

\maketitle
\abstract{This paper is the third in a series devoted to constructing stochastic motions for the two-dimensional $N$-body delta-Bose gas for all integers $N\geq 3$ and establishing the associated Feynman--Kac-type formulas. The main results here prove the Feynman--Kac-type formulas by using the stochastic many-$\delta$ motions from \cite{C:SDBG2-3} as 
the underlying diffusions. The associated multiplicative functionals show a new form and are derived from the analytic solutions of the two-dimensional $N$-body delta-Bose gas obtained in \cite{C:DBG-3}. For completeness, the main theorem includes the formula for $N=2$, which is a minor modification of the Feynman--Kac-type formula proven in \cite{C:BES-3} for the relative motions. 
}\medskip 

\noindent \emph{Keywords:} Delta-Bose gas; Schr\"odinger operators; diagrammatic expansions; path integrals; Bessel processes. \medskip

\noindent \emph{Mathematics Subject Classification (2020):} 60J55, 60J65, 60H30, 81S40.
\medskip 

\setlength{\cftbeforesecskip}{1pt}
\setlength\cftaftertoctitleskip{2pt}
\renewcommand{\cftsecleader}{\cftdotfill{\cftdotsep}}
\tableofcontents

\section{Introduction}\label{sec:intro}
This paper is the third in a series devoted to constructing stochastic motions for the two-dimensional $N$-body delta-Bose gas for all integers $N\geq 3$ and establishing the associated Feynman--Kac-type formulas. We will prove the formulas in this paper by using a $\CN$-valued strong Markov process $\{\ms Z_t\}=\{Z^j_t\}_{1\leq j\leq N}$, called a {\bf stochastic many-$\bs \delta$ motion}. It has continuous paths before its lifetime $T_\partial$ and is defined under the probability measure $\P^{\bbeta,\bs w}$ constructed in \cite{C:SDBG2-3}. Here, $\bbeta,\bs w\in (0,\infty)^{\mc E_N}$ are parameters tuning strengths of interactions of particles in different manners and play the roles of coupling constants and weights, respectively, where
\begin{align}\label{def:EN-3}
\mathcal E_N\,\defeq\,\{\bj=(j\prime,j)\in \Bbb N^2;1\leq j<j\prime\leq N\}.
\end{align}
The main properties of this stochastic many-$\delta$ motion can be found in \cite[Theorem~3.1, Proposition~3.11 and Proposition~3.15]{C:SDBG2-3}, and some of them will be recalled later. Note that  {\bf  we only consider $w_\bj>0$ for all $\bj\in \mc E_N$ in this paper} although this condition is not required in \cite{C:SDBG2-3}. Consequently, the notations here differ slightly; see \cite[Remark~3.2]{C:SDBG2-3}. Also, \cite{C:SDBG2-3} includes a description of this stochastic many-$\delta$ motion by using an SDE with singular drift explicitly expressible in $(\bbeta,\bs w)$, but
the SDE will be used here only in Remark~\ref{rmk:weight}.

The primary applications of the above stochastic many-$\delta$ motion in this paper are based on its interpretation as a family of $N$ independent two-dimensional Brownian motions conditioned for contact interactions. These interactions are realized in the form of $Z_t^{j\prime}-Z_t^{j}=0$ for $\bj=(j\prime,j)\in \mc E_N$ to the degree of nonzero local times $\{L^{\bj}_t\}$ and satisfy the {\bf no-triple-contacts (NTC)} that at most two particles can engage in contact interactions at a time. Moreover, the sample paths of the stochastic many-$\delta$ motion can be subdivided by an increasing sequence of stopping times $\{T^m_0\}_{m\in \Bbb N}$, called the {\bf contact-creation times}, according to the different pairs of particles that can engage in contact interactions. Specifically, in $[T_0^{m},T_0^{m+1})$ for any $m\geq 0$ ($T_0^0\equiv 0$), no contact interactions occur in particles with different states at time $T_0^{m}$, and whenever $m\geq 1$, contact interactions occur in a unique pair of particles, labelled by a random index $\bs J_m=(J_m\prime,J_m)\in \mc E_N$. These characteristics make possible the construction of the stochastic many-$\delta$ motion by locally transforming a {\bf stochastic one-$\bs \delta$ motion} from \cite{C:BES-3,C:SDBG1-3} with Girsanov's theorem over $[T_0^{m},T_0^{m+1})$. Nonetheless, the continuous extension to $T_0^{m+1}$ concerns constructing and studying the exit measure of a supermartingale. New properties are {elicited} from the extremal behavior of the stochastic one-$\delta$ motion for changing measures, including especially the NTC at $T_0^{m+1}$ to restart the construction over the next interval $[T_0^{m+1},T_0^{m+2})$. 

\subsection{Analytic path integrals}\label{sec:intro-analytic}
The Feynman--Kac-type formulas in this paper represent limiting semigroups 
of the following approximate Hamiltonians for the two-dimensional $N$-body delta-Bose gas:
\begin{align}\label{def:HN-3}
 -\frac{1}{2}\sum_{j=1}^N\Delta_{z^j}-\sum_{\bj=(j\prime,j)\in \mathcal E_N}\left(\frac{2\pi}{\log \vep^{-1}}+\frac{2\pi \lambda_\bj}{\log^{2} \vep^{-1}}\right) \vep^{-2}\phi_\bj(\vep^{-1}(z^{j\prime}-z^j)),\quad z^j\in \Bbb C,
\end{align}
where $\lambda_\bj$  and $\phi_\bj$  can use any given real constants and probability densities in $ \C_c(\R^2)$, respectively. Note that these approximations show a renormalization generalizing the two-particle case introduced in \cite{AGHH:2D-3}; see \cite[Section~1]{C:SDBG1-3} for a discussion of this setting. 

Specifically, we will work with the analytic solutions $Q^{\bbeta;N}_{0;t}f(z_0)$ obtained as the limits of the following Feynman--Kac semigroups representing the approximate Hamiltonians in \eqref{def:HN-3}: 
\begin{align}\label{DBG:approx}
&\quad\; \lim_{\vep\to 0}\E^{(0)}_{z_0}\Biggl[\exp\Biggl\{
\sum_{\bj=(j\prime,j)\in \mathcal E_N}\int_0^t\left(\frac{2\pi}{\log \vep^{-1}}+\frac{2\pi \lambda_\bj}{\log^{2} \vep^{-1}}\right)\vep^{-2} \phi_\bj(\vep^{-1}(Z^{j\prime}_r-Z^j_r))\d r\Biggl\}f(\ms Z_t)\Biggr]\notag\\
&=Q^{\bbeta;N}_{0;t}f(z_0),\quad\forall\; f\in \B_b(\CN).
\end{align}
Here, $z_0=(z_0^1,\cdots,z_0^N)\in \CN$ satisfies $z_0^{j\prime}\neq z_0^j$ for all $j\prime\neq j$, 
 $\{\ms Z_t\}=\{Z^j_t\}_{1\leq j\leq N}$ under $\P_{z_0}^{(0)}$ is a $2N$-dimensional standard Brownian motion with initial condition $z_0$, and 
\begin{align}
Q^{\bbeta;N}_{0;t}f(z_0)\,\defeq\,{\mathbb E}^{(0)}_{z_{0}}
\bigl[f(\ms Z_{t})\bigr]+ \sum_{m=1}^{\infty }
\sum_{
\substack{ {\mathbf i}_{1},\ldots ,{\mathbf i}_{m}\in \mathcal E_{N}\\{\mathbf i}_{1}\neq \cdots \neq {\mathbf i}_{m}}}
\int _{0< s_{1}< \cdots < s_{m}< t} P^{\bbeta;{\mathbf i}_{1},\ldots ,{
\mathbf i}_{m}}_{s_{1},\ldots ,s_{m},t}f(z_{0})
\mathrm{d} \boldsymbol s_{m}. \label{P:series}  
\end{align}
The definition in \eqref{P:series} uses $\d \bs s_m\,\defeq\,\d s_1\cdots \d s_m$ and $P^{\bbeta;{\mathbf i}_{1},\ldots ,{
\mathbf i}_{m}}_{s_{1},\ldots ,s_{m},t}f(z_{0})$ from \eqref{def:Qsummands} 
as an iterated space-time Riemann integral of products of copies of the following functions:
\begin{gather}
P_{s'-s}(z,z')\,=P_{s'-s}(z'-z)\;\defeq\,
\frac{1}{2\pi (s'-s)}\exp \biggl(- \frac{ |z'-z| ^{2}}{2(s'-s)} \biggr),\label{def:Pt-3} \\
 \s^{\wt{\beta}}(\tau)\;\defeq\, 4\pi \int_0^\infty \frac{\wt{\beta}^u\tau^{u-1}}{\Gamma(u)}\d u.\label{def:sbeta-3}
\end{gather}
Also, the set $\bbeta$ of coupling constants has components $\beta_\bj$ determined by the following equations: 
\begin{linenomath*}\begin{align*}
\frac{\log \beta_\bj}{2}=\log 2+\lambda_{\bj}-\EM-\int_{\Bbb C}\int_{\Bbb C}\phi_\bj(z)\phi_\bj(z')\log |z-z'|\d z\d z',
\end{align*}\end{linenomath*}
where $\EM$ is the Euler--Mascheroni constant. Note that \eqref{DBG:approx} can be justified by the proof of \cite{C:DBG-3} \emph{mutatis mutandis}, although \cite{C:DBG-3}  only considers the homogeneous case of $\lambda_\bj\equiv \lambda$ and $\phi_\bj\equiv \phi$; see \cite{GQT-3} for the analytic solutions of the homogeneous case under $f\in L^2$.

\begin{figure}[t]
\begin{center}
 \begin{tikzpicture}[scale=1]
    \draw[gray, thin] (0,0) -- (7.6,0);
    \foreach \i in {0.0, 0.5, 1.2, 2.0, 2.5, 3.2, 3.7, 4.7, 5.5, 6.4, 7.0, 7.6} {\draw [gray] (\i,-.05) -- (\i,.05);}
    \draw (0.0,-0.03) node[below]{$0$};
    \draw (0.5,-0.07) node[below]{$s_1$};
    \draw (1.2,-0.07) node[below]{$\tau_1$};
    \draw (2.0,-0.07) node[below]{$s_2$};
    \draw (2.5,-0.07) node[below]{$\tau_2$};
    \draw (3.2,-0.07) node[below]{$s_3$};
    \draw (3.7,-0.07) node[below]{$\tau_3$};
    \draw (4.7,-0.07) node[below]{$s_4$};
    \draw (5.5,-0.07) node[below]{$\tau_4$};
    \draw (6.4,-0.07) node[below]{$s_5$};
    \draw (7.0,-0.07) node[below]{$\tau_5$};
    \draw (7.6,-0.03) node[below]{$t$};
    \node at (0.0,0.40) {$\bullet$};
    \node at (0.0,1.20) {$\bullet$};
    \node at (0.0,1.80) {$\bullet$};
    \node at (0.0,2.30) {$\bullet$};
    \draw (0.0,0.40) node [left] {$z_0^1$};
    \draw (0.0,1.20) node [left] {$z_0^2$};
    \draw (0.0,1.80) node [left] {$z_0^3$};
    \draw (0.0,2.30) node [left] {$z_0^4$};  
    \draw [thick, color=black] (0.0,0.40) -- (0.5,0.6);
    \draw [thick, color=black] (0.0,1.20) -- (0.5,0.6);
    \draw [thick, color=black] (0.0,1.80) -- (0.5,1.15);    
    \draw [thick, color=black] (0.0,2.30) -- (0.5,2.20); 
    \node at (0.5,0.6) {$\bullet$};
    \node at (0.5,0.6) {$\bullet$};
    \node at (0.5,1.15) {$\bullet$};
    \node at (0.5,2.20) {$\bullet$};
    \draw [thick, color=black, snake=coil, segment length=4pt] (0.5,0.60) -- (1.2,0.80);
    \draw [thick, color=black] (0.5,1.15) -- (1.2,1.40);    
    \draw [thick, color=black] (0.5,2.20) -- (1.2,2.00);    
    \node at (1.2,0.8) {$\bullet$};
    \node at (1.2,0.8) {$\bullet$};
    \node at (1.2,1.40) {$\bullet$};
    \node at (1.2,2.00) {$\bullet$};
    \draw [thick, color=black] (1.2,0.8) -- (2.0,0.15);
    \draw [thick, color=black] (1.2,0.8) -- (2.0,1.45);
    \draw [thick, color=black] (1.2,1.40) -- (2.0,1.45);    
    \draw [thick, color=black] (1.2,2.00) -- (2.0,2.20);  
    \node at (2.0,0.15) {$\bullet$};
    \node at (2.0,1.45) {$\bullet$};
    \node at (2.0,1.45) {$\bullet$};
    \node at (2.0,2.20) {$\bullet$};
    \draw [thick, color=black] (2.0,0.15) -- (2.5,0.45);
    \draw [thick, color=black, snake=coil, segment length=4pt] (2.0,1.44) -- (2.52,1.185);    
    \draw [thick, color=black] (2.0,2.20) -- (2.5,1.80);    
    \node at (2.5,0.45) {$\bullet$};
    \node at (2.5,1.2) {$\bullet$};
    \node at (2.5,1.2) {$\bullet$};
    \node at (2.5,1.80) {$\bullet$};
    \draw [thick, color=black] (2.5,0.45)--(3.2,0.85);  
    \draw [thick, color=black] (2.5,1.20)--(3.2,0.85);    
    \draw [thick, color=black] (2.5,1.20)--(3.2,1.35);  
    \draw [thick, color=black] (2.5,1.80)--(3.2,2.10);        
    \node at (3.2,0.85) {$\bullet$};
    \node at (3.2,0.85) {$\bullet$};
    \node at (3.2,1.35) {$\bullet$};
    \node at (3.2,2.10) {$\bullet$};
    \draw [thick, color=black, snake=coil, segment length=4pt] (3.2,0.85)--(3.68,0.49) ;
    \draw [thick, color=black] (3.2,1.35)--(3.7,1.05);    
    \draw [thick, color=black] (3.2,2.10)--(3.7,2.02);  
    \node at (3.7,0.5) {$\bullet$};
    \node at (3.7,0.5) {$\bullet$};
    \node at (3.7,1.05) {$\bullet$};
    \node at (3.7,2.02) {$\bullet$};
    \draw [thick, color=black] (3.7,0.5)--(4.7,1.25);
    \draw [thick, color=black] (3.7,0.5)--(4.7,1.75);    
    \draw [thick, color=black] (3.7,1.05)--(4.7,2.30);    
    \draw [thick, color=black] (3.7,2.02)--(4.7,2.30);    
    \node at (4.7,1.25) {$\bullet$};
    \node at (4.7,1.75) {$\bullet$};
    \node at (4.7,2.30) {$\bullet$};
    \node at (4.7,2.30) {$\bullet$};
    \draw [thick, color=black] (4.7,1.25)--(5.5,0.45);
    \draw [thick, color=black] (4.7,1.75)--(5.5,1.20);   
    \draw [thick, color=black, snake=coil, segment length=4pt] (4.7,2.30)--(5.5,1.80); 
    \node at (5.5,0.45) {$\bullet$};
    \node at (5.5,1.20) {$\bullet$};
    \node at (5.5,1.8) {$\bullet$};
    \node at (5.5,1.8) {$\bullet$};
    \draw [thick, color=black] (5.5,0.45)--(6.4,0.35);
    \draw [thick, color=black] (5.5,1.20)--(6.4,0.35); 
    \draw [thick, color=black] (5.5,1.8)--(6.4,1.15);    
    \draw [thick, color=black] (5.5,1.8)--(6.4,2.40); 
    \node at (6.4,0.35) {$\bullet$};
    \node at (6.4,0.35) {$\bullet$};
    \node at (6.4,1.15) {$\bullet$};
    \node at (6.4,2.40) {$\bullet$};
    \draw [thick, color=black, snake=coil, segment length=4pt] (6.4,0.35)--(7.0,0.95);
    \draw [thick, color=black] (6.4,1.15)--(7.0,1.85);    
    \draw [thick, color=black] (6.4,2.40)--(7.0,2.30);
    \node at (7.0,0.95) {$\bullet$};
    \node at (7.0,0.95) {$\bullet$};
    \node at (7.0,1.85) {$\bullet$};
    \node at (7.0,2.30) {$\bullet$};
    \draw [thick, color=black] (7.0,0.95)--(7.6,0.45);
    \draw [thick, color=black] (7.0,0.95)--(7.6,0.85);    
    \draw [thick, color=black] (7.0,1.85)--(7.6,1.55); 
    \draw [thick, color=black] (7.0,2.30)--(7.6,2.15); 
    \node at (7.6,0.45) {$\bullet$};
    \node at (7.6,0.85) {$\bullet$};
    \node at (7.6,1.55) {$\bullet$};
    \node at (7.6,2.15) {$\bullet$};
\end{tikzpicture}
\end{center}
\vspace{-.5cm}
\caption{The figure illustrates the graphical representation of $P^{\bbeta;\bi_1,\bi_2,\bi_3,\bi_4,\bi_5}_{s_1,s_2,s_3,s_4,s_5,t}f(z_0)$ defined in \cite{C:DBG-3}, with $N=4$, $\bi_1=(2,1)$, $\bi_2=(3,2)$, $\bi_3=(2,1)$, $\bi_4=(4,3)$ and $\bi_5=(2,1)$. 
}
\label{fig:2-3}
\end{figure}

Despite the notational complexity in \eqref{def:Qsummands}, each $P^{\bbeta;{\mathbf i}_{1},\ldots ,{\mathbf i}_{m}}_{s_{1},\ldots ,s_{m},t}f(z_{0})$ mentioned above has been known for simple interpretations (e.g. \cite{CSZ:Mom-3, C:DBG-3,GQT-3}). For example, the interpretation in \cite[Section~4]{C:DBG-3} uses a graphical representation suggesting paths of $N$ particles with pairwise contact interactions. Specifically, with respect to the space-time integration in \eqref{def:Qsummands}, the graphical representation from \cite{C:DBG-3} views the integrand of each kernel, excluding $f$, as a product of edge weights of two types such that every edge weight defines a line segment in space-time:
\begin{itemize}
\item The edge weight of the first type takes the form of a heat kernel $P_{s'-s}(\mbox{cnst}\cdot z,z')$ and defines a straight line segment from $(z,s)$ to $(z', s')$. This line segment represents the path of one particle, which is \emph{free} in the sense of not being affected by any contact interaction. 
\item The edge weight of the second type takes the form of a product of
$\s^{\wt{\beta}}(s'-s)P_{s'-s}(\mbox{cnst}\cdot z,z')$ and defines a coiled line segment
from $(z,s)$ and $(z',s')$. This line segment simultaneously represents the paths of \emph{two particles} undergoing contact interactions.  
\end{itemize}
The overall interpretation of the graphical representation from \cite{C:DBG-3}  thus considers a discrete graph such that the vertices are distinct pairs of the form $(\mathring{z},\mathring{s}) \in \R^2\times\R_+$. Here, $\mathring{z}$ is chosen from the space variables in \eqref{def:Qsummands} for integration (Remark~\ref{rmk:spatialv}) or is one of the distinct points $z_0^1,\cdots,z_0^N$ in the initial condition; $\mathring{s}$ is chosen from the time variables in  \eqref{def:Qsummands} or is one of the times $0$ and $t$. See Figure~\ref{fig:2-3} for an example. 

\subsection{Main results of this paper}\label{sec:intro-main}
The main results of this paper are summarized as the following theorem. It gives Feynman--Kac-type formulas 
for the analytic solutions $Q^{\bbeta;N}_{0;t}f(z_0)$ in \eqref{P:series}. In particular, for $N=2$, the formula is a minor modification of the one proven in \cite{C:BES-3} for the relative motions. 

\begin{thm}[Feynman--Kac-type formulas]\label{thm:final}
Let $N\geq 2$ be an integer, $\bbeta\in (0,\infty)^{\mc E_N}$, and $\bs w\in (0,\infty)^{\mc E_N}$. 
For any $z_0\in \CN$ with $z_0^{j\prime}-z_0^j\neq 0$ for all $\bj=(j\prime,j)\in\mc E_N$, 
\begin{align}\label{goal:FK}
Q^{\bbeta;N}_{0;t}f(z_0)=
\E^{\bbeta,\bs w}_{z_0}\Biggl[
\frac{\e^{{A}^{\bbeta,\bs w}_0(t)}\sum_{\bj\in \mc E_N}w_\bj K_0(\sqrt{2\beta_\bj}|Z^{\bj}_0|)}{\sum_{\bj\in \mc E_N}w_\bj K_0(\sqrt{2\beta_\bj}|Z^{\bj}_t|)}f(\ms Z_t);t<T_\partial\Biggr],\;\; \forall\;f\in \B_{b,+}(\CN),
\end{align}
where $K_\nu(\cdot)$ is the Macdonald function of index $\nu$, 
\begin{align}
\begin{split}
{A}^{\bbeta,\bs w}_0(t)&\,\defeq\,
\sum_{\bi,\bj\in \mc E_N, 
\bi\neq \bj}2\left(\frac{w_\bj}{w_\bi}\right)\int_0^t K_0(\sqrt{2\beta_\bj} |Z^{\bj}_s|)\d L^{\bi}_s+\int_0^t  \frac{\sum_{\bj\in \mathcal E_N} \beta_\bj w_\bj K_0(\sqrt{2\beta_\bj} |Z^{\bj}_s|)}{\sum_{\bj\in \mc E_N}w_\bj K_0(\sqrt{2\beta_\bj} |Z^{\bj}_s|)}\d s,
\label{def:finalA}
\end{split}
\end{align}
and $\{L^\bi_t\}$ is the local time of the {\bf stochastic relative motion} $\{Z^{\bi}_t\,\defeq\,(Z^{i\prime}_t-Z^i_t)/\two\}$, $\bi=(i\prime,i)\in \mc E_N$, at level zero.
\end{thm}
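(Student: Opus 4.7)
The plan is to establish \eqref{goal:FK} by showing that both sides agree, after decomposing the right-hand side along the contact-creation times $\{T_0^m\}_{m\geq 0}$, with the series \eqref{P:series} term by term. Setting $h(\bs z)\defeq\sum_{\bj\in\mc E_N}w_\bj K_0(\sqrt{2\beta_\bj}|z^\bj|)$ with $z^\bj=(z^{j\prime}-z^j)/\two$, the right-hand side of \eqref{goal:FK} rewrites as $h(z_0)\E^{\bbeta,\bs w}_{z_0}[\e^{A_0^{\bbeta,\bs w}(t)}f(\ms Z_t)/h(\ms Z_t);t<T_\partial]$. First I would partition $\{t<T_\partial\}$ by the number $m$ of contact-creation times in $[0,t]$ and by the labels $\bs J_1,\ldots,\bs J_m=\bi_1,\ldots,\bi_m$ of the successive interacting pairs. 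Because NTC and the very definition of the $T_0^k$ force $\bi_k\neq\bi_{k+1}$, the resulting indexing matches exactly the no-repeat sum in \eqref{P:series}.

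Next I would analyse each term of the partition by invoking the local Girsanov construction of $\{\ms Z_s\}$ from \cite{C:SDBG2-3}. On each interval $[T_0^{k-1},T_0^k)$ only $\bi_k$ undergoes contact interactions, so the law of $\{\ms Z_s\}$ there is absolutely continuous with respect to that of the stochastic one-$\delta$ motion with coupling $\beta_{\bi_k}$ on the relative coordinate $Z^{\bi_k}$ from \cite{C:BES-3,C:SDBG1-3}, paired with independent Brownian motions in the remaining relative coordinates. Applying It\^o's formula to $\log h(\ms Z_s)$---using the distributional identity $(-\Delta/2+\beta_\bj)K_0(\sqrt{2\beta_\bj}|\cdot|)=2\pi\delta_0$ on the diffusive part and tracking the local-time contributions at each hypersurface $\{Z^\bi=0\}$ carried by the SDE of $\ms Z$---shows that $\e^{A_0^{\bbeta,\bs w}(t)}h(z_0)/h(\ms Z_t)$ is precisely the telescoped product of the successive local Radon--Nikodym derivatives. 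In this identification, the drift integral in \eqref{def:finalA} arises from $(-\Delta/2)h/h$ via It\^o, whereas the local-time integrals $\int K_0(\sqrt{2\beta_\bj}|Z^\bj_s|)\d L^\bi_s$ with $\bj\neq\bi$ originate from the jumps of the drift of $\log h$ across the hypersurface $\{Z^\bi=0\}$ that is charged by $L^\bi$; the weight ratio $w_\bj/w_\bi$ appears because the Girsanov density on $[T_0^{k-1},T_0^k)$ is normalised by the $w_{\bi_k}$-term of $h$.

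Having reduced each interval to a Feynman--Kac problem for the stochastic one-$\delta$ motion on the active pair, I would invoke the $N=2$ Feynman--Kac formula of \cite{C:BES-3} on each $[T_0^{k-1},T_0^k)$: this converts the local expectation into a kernel integral involving the factor $\s^{\wt{\beta}_{\bi_k}}(s_k-s_{k-1})P_{s_k-s_{k-1}}(\cdots)$ of \eqref{def:sbeta-3}. Concatenating these kernels over $k=1,\ldots,m$ and integrating over $0<s_1<\cdots<s_m<t$ rebuilds exactly $P^{\bbeta;\bi_1,\ldots,\bi_m}_{s_1,\ldots,s_m,t}f(z_0)$ in \eqref{P:series}; summing over $m$ and admissible tuples $(\bi_1,\ldots,\bi_m)$ then yields \eqref{goal:FK}.

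The main obstacle is the continuity of the telescoping across the contact-creation times $T_0^k$: one must verify that the successive local Girsanov densities match at these random times so that they multiply into the clean ratio $h(z_0)/h(\ms Z_t)$ without boundary corrections, and that the resulting series converges absolutely. Both requirements lean on the exit-measure analysis of \cite{C:SDBG2-3} that already underpins the continuous extension of $\ms Z$ to $T_0^k$, together with the extremal Bessel-type behaviour of the one-$\delta$ motion ensuring that at most one new pair is created at each $T_0^k$. Once these consistency properties are in hand, dominated convergence and Fubini finish the term-by-term identification with \eqref{P:series}.
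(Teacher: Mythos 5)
Your skeleton---partition $\{t<T_\partial\}$ according to the number of contact-creation times in $[0,t]$ and the labels $\bs J_1,\ldots,\bs J_m=\bi_1,\ldots,\bi_m$, then match term by term with \eqref{P:series}---is the same as that of Theorem~\ref{thm:main2}, but you run it in the opposite direction and with a different engine. The paper starts from the analytic kernels, rewrites $\int_{0<s_1<\cdots<s_m<t}P^{\bbeta;\bi_1,\ldots,\bi_m}_{s_1,\ldots,s_m,t}f(z_0)\,\d\bs s_m$ as an iterated expectation over stochastic one-$\delta$ motions (Proposition~\ref{prop:main1}, built on the explicit local-time law \eqref{DY:law2-3} and Chapman--Kolmogorov manipulations), and only then converts to the many-$\delta$ motion through the identities \eqref{Pinhomo:id-3}, \eqref{Pinhomo:idsum-3}, \eqref{def:Q21-3} imported from \cite{C:SDBG2-3}. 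Your proposed It\^o/Girsanov telescoping of $\log h$ would, at best, re-derive those imported identities (and as sketched it needs care: $h$ is not smooth across the contact sets, and the normalization producing the factor $2(w_\bj/w_\bi)$ and the drift term in \eqref{def:finalA} must be pinned down); that part can simply be replaced by citation, so it is not where the difficulty lies.

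The genuine gap is the step you dismiss as ``consistency of the local densities at the $T_0^k$, leaning on the exit-measure analysis of \cite{C:SDBG2-3}.'' The local description of $\P^{\bbeta,\bs w}$ up to the next contact-creation time is closed at the right endpoint only \emph{implicitly}, as the exit measure of a supermartingale, and \cite{C:SDBG2-3} does not supply the quantitative identity needed to glue (i) the local-time functional of the active pair $\bi$ under $\P^\bi$, and (ii) the subsequent hitting of the next pair $\bj$, into a single many-$\delta$ expectation evaluated at $T_0^{\bi,1}$ on $\{\ms Z_{T_0^{\bi,1}}\in\CNwj\}$. This is precisely Proposition~\ref{prop:connector}, which converts $\E^{\bi}_{z_0}\bigl[\int_0^{t}\d L^{\bi}_{\tau}\,\e^{\beta_\bi\tau}K_0^{\bbeta,\bj}(\tau)\,\E^{\bj,\tau}_{\ms Z_\tau}[\cdots;T_0^{\bj}\le t]\bigr]$ into the right-hand side of \eqref{connector:20}, and its proof is not a soft dominated-convergence/Fubini argument: it requires a semi-discretization in time (Lemma~\ref{lem:connectorapprox}), the continuity/finiteness estimates behind \eqref{connector:lim} and \eqref{EL:continuity1}, the Markov property across mesh points combined with \eqref{DY:com-3}, \eqref{exp:LT-3}, \eqref{Pinhomo:id-3}, \eqref{def:Q21-3}, and finally the NTC property to identify the monotone limit of events with $\{T_0^{\bi,1}=T_0^{\bj}\le t\}$. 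Relatedly, your plan to ``invoke the $N=2$ formula on each $[T_0^{k-1},T_0^k)$ and concatenate'' glosses over the fact that these stochastic intervals do not line up with the kernel time variables: $s_k$ corresponds to $T_0^k$, but the coiled factor $\s^{\beta_{\bi_k}}(\tau_k-s_k)$ extends to $\tau_k>s_k$, so each kernel straddles consecutive intervals and the re-slicing (the change of integration order in Proposition~\ref{prop:main1} plus the connector identity) is exactly what must be proved. Without an argument of this type the term-by-term identification with \eqref{P:series} remains unestablished.
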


Note that in the formula of \eqref{goal:FK}, the weights $\bs w$ in the multiplicative functional and the stochastic many-$\delta$ motion ``cancel out'' each other; see Remark~\ref{rmk:weight} for heuristics. Also, more details of the normalization of the local times used in Theorem~\ref{thm:final} can be found in Remark~\ref{rmk:localtime}.

Theorem~\ref{thm:final} for $N\geq 3$ is established in this paper in the finer form of Theorem~\ref{thm:main2} that proves the Feynman--Kac-type formulas for each summand in \eqref{P:series}. The proof of Theorem~\ref{thm:main2}
 transforms such a summand to an iterated expectation using several stochastic one-$\delta$ motions studied in \cite{C:SDBG1-3} (Section~\ref{sec:SGP2}) and then converts the relevant stochastic one-$\delta$ motions to a single stochastic many-$\delta$ motion (Section~\ref{sec:FKproof}). In particular, the conversion addresses the technical issue that the local construction of the stochastic many-$\delta$ motion up to the next contact-creation time is closed from the right only \emph{implicitly} as an exit measure of a supermartingale, but proving the Feynman--Kac-type formulas requires quantifications of this closure (Proposition~\ref{prop:connector}). On the other hand, the overall argument of the proof of Theorem~\ref{thm:main2} considers and establishes a relationship between the above interpretation from \cite{C:DBG-3} for $P^{\bbeta;{\mathbf i}_{1},\ldots ,{\mathbf i}_{m}}_{s_{1},\ldots ,s_{m},t}f(z_{0})$ and the sample paths of the stochastic many-$\delta$ motions. Specifically, the time variables $s_1,\cdots,s_m$ correspond to the contact-creation times $T_0^1,\cdots,T_0^m$, and the indices $\bi_1,\cdots,\bi_m$ correspond to the random indices $\bs J_1,\cdots,\bs J_m$ mentioned before Section~\ref{sec:intro-analytic} for particles undergoing contact interactions. See the discussion below \eqref{m=2sum} for more details. 

Besides the method of proof, we see the Feynman--Kac-type formulas in Theorem~\ref{thm:final} for $N\geq 3$ very different from the others in the literature. For example, the present formulas include nontrivial boundary terms realized as the local-time integrals in \eqref{def:finalA} when one compares them with formulas using only ground state transformations; see \cite{C:SDBG1-3,C:SDBG2-3} for discussions. Moreover, each of these local-time integrals should be regarded as singular because $K_0(0)=\infty$ and also involves other particles via $ K_0(\sqrt{2\beta_\bj}|Z^{\bj}_s|)$ for $\bj\neq \bi$. By contrast, such involvement is not in the Feynman--Kac formula of the one-dimensional many-body delta-Bose gas~\cite{BC:1D-3}. \medskip 

\noindent {\bf Frequently used notation.} $C(T)\in(0,\infty)$ is a constant depending only on $T$ and may change from inequality to inequality unless indexed by labels of equations. Other constants are defined analogously. We write $A\less B$ or $B\more A$ if $A\leq CB$ for a universal constant $C\in (0,\infty)$. $A\asymp B$ means both $A\less B$ and $B\more A$. For a process $Y$, the expectations $\E^Y_y$ and $\E^Y_\nu$ and the probabilities $\P^Y_y$ and $\P^Y_\nu$ mean that the initial conditions of $Y$ are the point $x$ and the probability distribution $\nu$, respectively. \emph{Unless otherwise mentioned, the processes are subject to constant initial conditions}. To handle many-body dynamics, we introduce the following notation of \emph{multiplication columns}:
\begin{align}
\label{def:column-3} 
\begin{bmatrix} a_{1}
\\
\vdots
\\
a_{n} \end{bmatrix}_{\times } \stackrel{\mathrm{def}} {=}
a_{1} \times \cdots \times a_{n}. 
\end{align}
In most cases, such a multiplication column will display the product of
$a_{1},\ldots ,a_{n}$ ``occurring over the same period of time,'' and the
entries indexed by $i\prime $ and $i$ will appear in the first and second
rows, respectively. Products of measures will be denoted similarly by using
$[\cdot ]_{\otimes}$. Lastly, $\log $ is defined with base $\e$, and $\log^ba\,\defeq\, (\log a)^b$. \medskip

\noindent {\bf Frequently used asymptotic representations.} The following can be found in \cite[p.136]{Lebedev-3}:
\begin{align}
&K_0(x)\sim \log x^{-1},&&\hspace{-3cm} x\searrow 0,\label{K00-3}\\
&K_0(x)\sim \sqrt{\pi/(2x)}\e^{-x},&&\hspace{-3cm} x\nearrow\infty.\label{K0infty-3}
\end{align}

\section{Stochastic path integrals}\label{sec:path-int}
Our goal in this section is to prove Theorem~\ref{thm:main2}, which refines Theorem~\ref{thm:final}. 
Let us begin by specifying the kernels $P^{\bbeta ;{\mathbf i}_{1},\ldots ,{\mathbf i}_{m}}_{s_{1},\ldots ,s_{m},t}f(z_{0})$ discussed in Section~\ref{sec:intro} and introducing some notations for the stochastic many-$\delta$ motions $\{\ms Z_t\}$ under $\P^{\bbeta,\bs w}$ for $\bbeta,\bs w\in (0,\infty)^{\mc E_N}$ \cite{C:SDBG2-3}. First, given any $z=(z^1,\cdots,z^N)\in \CN$ and $\bi=(i\prime,i)\in \mc E_N$, set 
\begin{align}\label{def:unitary-3}
z^{\bi}\,\defeq\, \frac{z^{i\prime}-z^{i}}{\two},\quad 
z^{\bi\prime}\,\defeq\, \frac{z^{i\prime}+z^{i}}{\two},
\end{align}
and we take the following sets as state spaces or sets of eligible initial conditions: 
\begin{align}
\begin{split}\label{def:CNw-3}
\CNwi&\,\defeq\,\{z\in \CN;z^\bi=0\;\&\; z^\bj\neq 0\;\forall\;\bj\in \mc E_N\setminus\{\bi\}\},\quad \bi\in \mc E_N,\\
 \CNw&\,\defeq\, \{z\in \CN;z^\bj\neq 0\;\forall\;\bj\in \mc E_N\}.
\end{split}
\end{align}

\begin{defi}
\label{thm:DBGsgp}
For all $\bbeta\in (0,\infty)^{\mc E_N}$, $\bi_1,\cdots,\bi_m\in \mc E_N$ with $\bi_1\neq \bi_2\neq \cdots\neq \bi_m$, $f\in \B_b(\CN)$ and $z_0\in\CNw$,
\begin{align}
\begin{aligned}
 P^{\bbeta ;{\mathbf i}_{1},\ldots ,{\mathbf i}_{m}}_{s_{1},\ldots ,s_{m},t}f(z_{0})& \stackrel{
\mathrm{def}} {=}
\int _{{\mathbb C}^{N-1}}
\d z_1^{/\bi_1}
K_{0,s_1}\bigl(z_{0},z_{1}^{/{\mathbf i}_{1}}\bigr)
\\
&\quad\;
\int _{{\mathbb C}^{N-1}}\d z_2^{/\bi_2}
\int _{s_{1}}^{s_{2}} \mathrm{d}\tau _{1}
K^{\beta_{\bi_1} ;{\mathbf i}_{1}}_{s_{1},\tau _{1},s_{2}}\bigl(z_{1}^{/{
\mathbf i}_{1}},z_{2}^{/{\mathbf i}_{2}}
\bigr)\cdots
\\
&\quad\;
\int _{{\mathbb C}^{N-1}}\d z^{/\bi_m}_m
\int _{s_{m-1}}^{s_{m}} \mathrm{d}\tau _{m-1}
K^{\beta_{\bi_{m-1}} ;{\mathbf i}_{m-1}}_{s_{m-1},\tau _{m-1},s_{m}}\bigl(z^{/{
\mathbf i}_{m-1}}_{m-1},z^{/{\mathbf i}_{m}}_{m}
\bigr)
\\
&\quad\;
\int _{{\mathbb C}^{N}}\d z_{m+1}
\int _{s_{m}}^{t} \mathrm{d}\tau _{m}
K^{\beta_{\bi_m} ;{\mathbf i}_{m}}_{s_{m},\tau _{m},t}\bigl(z_{m}^{/{
\mathbf i}_{m}},z_{m+1}
\bigr)f(z_{m+1}). \label{def:Qsummands}
\end{aligned}
\end{align}
The right-hand side of \eqref{def:Qsummands} uses the following pieces of notation:
for $u_0=(u_0^1,\cdots,u_0^N)$, $v_0=(v_0^1,\cdots,v_0^N)\in \CN$ and $\wt{\beta}>0$,
\begin{align}
&K_{\tau,s'}(u_{0},v_{0})\stackrel{\mathrm{def}} {=}
\prod_{k=1}^{N}P_{s'-\tau}
\bigl(u_{0}^{k},v_{0}^{k}\bigr),
\label{def:K}
\\
& K^{\wt{\beta} ;{\mathbf i}}_{s,\tau ,s'}(u_{0},v_{0})
\stackrel{\mathrm{def}} {=}
\int _{{\mathbb C}^{N-1}} 
\d \tilde{z}^{\backslash\bi}
 K^{\wt{\beta} ;{\mathbf i};\interact}_{s,\tau }\bigl(u_{0},\tilde{z}^{\backslash \bi}\bigr) K_{\tau ,s'}\bigl(\tilde{z}^{\backslash\!\backslash\bi},v_{0}\bigr),\label{def:Kbeta}\\
& K^{\wt{\beta} ;{\mathbf i};\interact}_{s,\tau }(u_{0},v_0)\stackrel{\mathrm{def}} {=} 
\begin{bmatrix} P_{\tau -s}\bigl({\sqrt{2}}u_{0}^{i},v_0^{i\prime}
\bigr)
\\[6pt]
{\mathfrak s}^{\wt{\beta}}(\tau -s)
\\[6pt]
\,\prod_{k:k\notin {\mathbf i}}P_{\tau -s}\bigl(u_{0}^{k},v_0^{k}
\bigr) \end{bmatrix}_{\times },\label{def:Kbetainteract} \\
&u_{0}^{/{\mathbf i}}\,=\bigl(u_{0}^{1/{\mathbf i}},
\ldots ,u_{0}^{N/{\mathbf i}}\bigr)\in \CN:
u_0^{k/\bi}\stackrel{\mathrm{def}} {=}
\begin{cases}
u_0^i,&k\in\bi,\\
u_0^k,&k\notin \bi, 
\end{cases}
\label{def:uslash} \\
&u_{0}^{\backslash{\mathbf i}}\,\in \Bbb C^{\bi\complement\cup \{i\prime\}}: u_0^{k\backslash\bi}\stackrel{\mathrm{def}} {=}
\begin{cases}
u_0^{\bi\prime},& k=i\prime,\\
u_0^k,&k\notin \bi,
\end{cases}
\label{def:ubslash}\\
&u_0^{\bb\bi}\,=\bigl(u_0^{1\bb\bi},\cdots,u_0^{N\bb\bi}\bigr)\in \CN: u_0^{k\bb\bi}\stackrel{\mathrm{def}} {=}
\begin{cases}
u_0^{\bi\prime}/\two ,& k\in \bi,\\
u_0^k,&k\notin \bi
\end{cases}\label{def:udbslash},\\
\label{def:integrators}
&\d z_\ell\defeq \bigotimes_{k=1}^N \d z_\ell^k,\quad
\d z^{/\bi}_\ell \defeq \bigotimes_{k:k\neq i\prime}\d z_\ell^{k},
\quad
\d \tilde{z}^{\backslash\bi}\,\defeq \Biggl(\bigotimes
_{k:k
\notin \bi}\mathrm{d}\tilde{z}^{k}\Biggr)\otimes \d \tilde{z}^{\bi\prime}.
\end{align}
Here, $P_{s'-s}(z,z')$ and ${\mathfrak s}^{\wt{\beta}}(\cdot)$ are defined in \eqref{def:Pt-3} and \eqref{def:sbeta-3}, and $\bi=(i\prime,i)\in \mc E_N$ is also regarded as the set $\{i\prime,i\}$.
\end{defi}

\begin{rmk}\label{rmk:spatialv}
The space variables for drawing the graphical representation of $P^{\bbeta ;{\mathbf i}_{1},\ldots ,{\mathbf i}_{m}}_{s_{1},\ldots ,s_{m},t}f(z_{0})$ discussed in Section~\ref{sec:intro} are those $\R^2$-valued variables from the right-hand sides of the equations in \eqref{def:integrators}. \qed  
\end{rmk}

The other notations to be used in the statement of Theorem~\ref{thm:main2} concern the contact-creation times $T_0^m$ and the random indices $\bs J_m$ of pairs of particles for the associated contact interactions, where $m\geq 1$.
To specify the contact-creation times, first, we set 
\begin{align}
\Tw_\eta&\,\defeq\,\inf\{t\geq 0;\exists\;\bj\;\mbox{s.t.}\; |Z^\bj_t|=\eta\},\label{def:Tw-3}\\
\Twi_\eta&\,\defeq
 \inf\{t\geq 0;\;\exists\;\bj\neq \bi\;\mbox{s.t.}\;|Z^\bj_t|=\eta\}\label{def:Twi-3}.
\end{align}
Then, the contact-creation times are defined inductively as the following stopping times $T^m_0$, $m\in \Bbb N$, using the shift operators $\vartheta_t$: 
\begin{align}
\label{def:Tm1-3}
 T^1_0&\,\defeq\, T_0^{\mc E_N},
\quad T^{m+1}_0\,\defeq\, \begin{cases}
\TwJm_0\circ \vartheta_{T^m_0}, &\mbox{if}\quad T^m_0<\infty,\\
\infty, &\mbox{otherwise},
\end{cases}
\end{align}
under $\P^{\bbeta,\bs w}_{z_0}$ for $z_0\in \CNw$, and
\begin{align}
 T^{1}_0=T_0^{\bi,1}&\,\defeq\, 
\Twi_0, \quad 
T^{m+1}_0=T^{\bi,m+1}_0\,\defeq\, \begin{cases}
\TwJm_0\circ \vartheta_{T^{m}_0}, &\mbox{if}\quad T^{\bi,m}_0<\infty,\\
\infty, &\mbox{otherwise},
\end{cases}\label{def:Tm2-3}
\end{align}
under $\P^{\bbeta,\bs w}_{z_0}$ for $z_0\in \CNwi$, where $\bs J_m$ is the unique random index in $\mc E_N$ such that $\ms Z_{T^{m}_0}\in \CNwJm$. In particular, the lifetime $T_\partial$ of the stochastic many-$\delta$ motion is defined as follows: 
\begin{align}\label{def:Tinfty-3}
\begin{aligned}
T_\partial=T_0^\infty&\,\defeq\,\lim_{m\to\infty}\ua\, T^m_0. 
\end{aligned}
\end{align}
Note that $\P^{\bbeta,\bs w}_{z_0}(T_0^1<\infty)=1$ for all $z_0\in \CNw$, and if $\beta_\bj=\beta$ for all $\bj\in \mc E_N$, then $T_0^m$ is a.s. finite for all $m\in \Bbb N$ \cite[Theorem~3.1]{C:SDBG2-3}. 

\begin{thm}\label{thm:main2}
Fix an integer $N\geq 3$, $\bbeta,\bs w\in (0,\infty)^{\mc E_N}$, $z_0\in \CNw$,
and $f\in \B_{b,+}(\CN)$. Write
\begin{align}
K_0^{\bbeta,\bj}(s)\,\defeq\, K_0(\sqrt{2\beta_\bj}|Z^\bj_s|), \quad 
K^{\bbeta,\bs w}_0(s)\,\defeq\,\sum_{\bj\in \mc E_N}w_\bj K_0^{\bbeta,\bj}(s).
\end{align}
Then with respect to the stochastic many-$\delta$ motion $\{\ms Z_t\}=\{Z^j_t\}_{1\leq j\leq N}$ 
defined under $\P^{\bbeta,\bs w}_{z_0}$, the following Feynman--Kac-type formulas hold:\medskip

\noindent {\rm (1$\cc$)} For all $\bi_1\in \mc E_N$, 
\begin{align}
&\quad\;\E_{z_0}^{\bbeta,\bs w}\left[\frac{\e^{{A}^{\bbeta,\bs w}_0(t)} K_0^{\bbeta,\bs w}(0)}{K_0^{\bbeta,\bs w}(t)} f(\ms Z_t);
\left\{
\begin{array}{cc}
 t<T_0^{1}\\
\ms Z_{T_0^1}\in \CNwia
\end{array}
\right\}
\right]\notag\\
&=\E_{z_0}^{(0)}\left[\frac{w_{\bi_1} K_0^{\bbeta,\bi_1}(t)}{K_0^{\bbeta,\bs w}(t)} f(\ms Z_t)\E^{\bi_1}_{\ms Z_t}[\e^{-A^{\bbeta,\bs w,\bi_1}_0(T^{\bi_1}_0)}]\right].\label{FK:m=0}
\end{align}
Moreover,
\begin{align}\label{FK:m=0+}
\E^{\bbeta,\bs w}_{z_0}\left[\frac{\e^{{A}^{\bbeta,\bs w}_0( t)}K_0^{\bbeta,\bs w}(0)}{K_0^{\bbeta,\bs w}(t)}  f(\ms Z_t);t<T_0^1\right]=\E^{(0)}_{z_0}[f(\ms Z_t)].
\end{align}

\noindent {\rm (2$\cc$)} For all integers $m\geq 1$ and $\bi_1,\cdots,\bi_m\in \mc E_N$ with $\bi_1\neq \bi_2\neq \cdots\neq \bi_m$, it holds that
\begin{align}
&\quad\;\E^{\bbeta,\bs w}_{z_0}\left[\frac{\e^{{A}^{\bbeta,\bs w}_0(t)}K_0^{\bbeta,\bs w}(0)}{K_0^{\bbeta,\bs w}(t)}f(\ms Z_t);
\left\{
\begin{array}{cc}
T^m_0\leq t<T_0^{m+1}\\
\ms Z_{T_0^1}\in \CNwia,\cdots,\ms Z_{T_0^m}\in \CNwib
\end{array}
\right\}
\right]\notag\\
&=\int _{0< s_{1}< \cdots < s_{m}< t}P^{\bbeta ;{\mathbf i}_{1},\ldots ,{
\mathbf i}_{m}}_{s_{1},\ldots ,s_{m},t}f(z_{0})
\,\mathrm{d} \boldsymbol s_{m}.\label{sum:DBG}
\end{align}

\noindent {\rm (3$\cc$)} The Feynman--Kac-type formula in \eqref{goal:FK} holds.
\end{thm}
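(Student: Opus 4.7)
The plan is to establish parts (1$\cc$) and (2$\cc$) separately---(2$\cc$) by induction on $m$---and then obtain (3$\cc$) by summing the two over $m\ge 0$ and over all admissible sequences $\bi_1\neq\bi_2\neq\cdots\neq\bi_m$. The key structural observation is that the events
\[
\{T_0^m\le t<T_0^{m+1}\}\cap\{\ms Z_{T_0^1}\in\CNwia,\,\ldots,\,\ms Z_{T_0^m}\in\CNwib\}
\]
partition $\{t<T_\partial\}$ into chambers labelled by $(\bi_1,\ldots,\bi_m)$, matching the indexing of the analytic series \eqref{P:series}; under this labelling, the time variables $s_k$ in \eqref{def:Qsummands} correspond to the contact-creation times $T_0^k$ and the indices $\bi_k$ to $\bs J_k$.

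For (1$\cc$) I would begin with \eqref{FK:m=0+}. On $\{t<T_0^1\}$ none of the local times $L^\bi$ is yet activated, so $A_0^{\bbeta,\bs w}(t)$ reduces to its absolutely continuous part $\int_0^t[\sum_\bj\beta_\bj w_\bj K_0^{\bbeta,\bj}(s)]/K_0^{\bbeta,\bs w}(s)\,\d s$. By the construction of $\P^{\bbeta,\bs w}$ in \cite{C:SDBG2-3}, the process $\{\ms Z_{s\wedge T_0^1}\}$ is the Doob/Girsanov transform of two-dimensional Brownian motion with density
\[
M_t\,=\,\frac{K_0^{\bbeta,\bs w}(t)}{K_0^{\bbeta,\bs w}(0)}\exp\!\biggl(-\int_0^t\!\frac{\sum_\bj\beta_\bj w_\bj K_0^{\bbeta,\bj}(s)}{K_0^{\bbeta,\bs w}(s)}\,\d s\biggr),
\]
and the multiplicative functional on the left of \eqref{FK:m=0+} is exactly $1/M_t$ on $\{t<T_0^1\}$. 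Because two-dimensional Brownian particles almost surely never collide, $\E^{(0)}_{z_0}[f(\ms Z_t)M_t\cdot M_t^{-1}]=\E^{(0)}_{z_0}[f(\ms Z_t)]$, which yields \eqref{FK:m=0+}. The sharper identity \eqref{FK:m=0} is obtained by further conditioning on the location $\ms Z_{T_0^1}$: the Radon--Nikodym piece capturing the chamber $\{\ms Z_{T_0^1}\in\CNwia\}$ produces the weight $w_{\bi_1}K_0^{\bbeta,\bi_1}(t)/K_0^{\bbeta,\bs w}(t)$, while the remaining contribution from $\ms Z_t$ up to the first contact-creation time of a stochastic one-$\bi_1$-motion is precisely $\E^{\bi_1}_{\ms Z_t}[\e^{-A^{\bbeta,\bs w,\bi_1}_0(T^{\bi_1}_0)}]$ by the one-$\delta$-motion analysis of \cite{C:BES-3,C:SDBG1-3}.

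For (2$\cc$) the base case $m=1$ follows by combining \eqref{FK:m=0} with the decomposition of the one-$\bi_1$-motion kernel $K^{\beta_{\bi_1};\bi_1}_{s_1,\tau_1,s_2}$ into its interacting and free factors as in \eqref{def:Kbeta}--\eqref{def:Kbetainteract}. For the inductive step I would apply the strong Markov property of $\{\ms Z_t\}$ at $T_0^1$: on $\{\ms Z_{T_0^1}\in\CNwia\}$ the shifted process $\{\ms Z_{T_0^1+\cdot}\}$ is again a stochastic many-$\delta$ motion started in $\CNwia$, with its own first contact-creation time equal to $T_0^2-T_0^1$. The $m$-th identity thus reduces to an integration of the $(m-1)$-th identity, applied to the shifted initial condition, against the outermost kernel supplied by the base case. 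Iterating this reduction while matching $s_k\leftrightarrow T_0^k$ and $\bi_k\leftrightarrow\bs J_k$ at each step reproduces the iterated integral in \eqref{def:Qsummands} termwise. Part (3$\cc$) is then obtained by summing \eqref{FK:m=0+} and \eqref{sum:DBG} over $m\ge0$ and all admissible sequences, and multiplying by $K_0^{\bbeta,\bs w}(0)$; the chamber partition displayed above recovers $\{t<T_\partial\}$ on the left-hand side and \eqref{P:series} on the right.

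The main obstacle, already flagged in the introduction, is the passage across each contact-creation time $T_0^{m+1}$. The local construction of $\{\ms Z_t\}$ on $[T_0^m,T_0^{m+1})$ is closed from the right only \emph{implicitly}, as the exit measure of a supermartingale built from the stochastic one-$\bs J_m$-motion, so chaining the reductions of (1$\cc$) and (2$\cc$) requires quantitative control of (i) the support of that exit measure in $\CN_{\bs J_{m+1}\,\nsp,\,\bs J_{m+1}\complement\,\sp}$ at $T_0^{m+1}$---that is, preservation of the no-triple-contacts property, which validates the indicators $\{\ms Z_{T_0^k}\in\CN_{\bi_k\,\nsp,\,\bi_k\complement\,\sp}\}$ appearing in \eqref{sum:DBG}---and (ii) the behaviour of the multiplicative functional $\e^{A_0^{\bbeta,\bs w}}K_0^{\bbeta,\bs w}(0)/K_0^{\bbeta,\bs w}$ through $T_0^{m+1}$, so that the strong Markov restart after each contact-creation time is compatible with the iterated kernel on the analytic side. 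This bookkeeping is exactly what Proposition~\ref{prop:connector} supplies, and its correct deployment at every $T_0^{m+1}$ is the crux of the proof.
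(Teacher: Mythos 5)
Your overall architecture (chambers indexed by the contact-creation times, conversion between $\P^{\bbeta,\bs w}$, the one-$\delta$ laws $\P^{\bi}$ and $\P^{(0)}$, and Proposition~\ref{prop:connector} at each crossing of $T_0^{m+1}$) matches the paper, and your treatment of (1$\cc$) is essentially workable, though stated backwards: the density $M_t$ of $\P^{\bbeta,\bs w}$ on $\F_t\cap\{t<T_0^1\}$ with respect to $\P^{(0)}$ is \emph{not} supplied directly by the construction in \cite{C:SDBG2-3} (which transforms one-$\delta$ motions, not Brownian motion); it has to be derived by first splitting over the chamber $\{\ms Z_{T_0^1}\in\CNwia\}$ via \eqref{Pinhomo:id-3}, applying the Markov property and \eqref{DY:com-3} to get \eqref{FK:m=0}, and only then summing over $\bi_1$ with \eqref{Pinhomo:idsum-3} to obtain \eqref{FK:m=0+}. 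Asserting the Girsanov density first and then ``conditioning on the chamber'' to recover \eqref{FK:m=0} presupposes exactly the identities that need proving.

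The genuine gap is in (2$\cc$). First, nothing in your proposal explains how the interacting weights $\s^{\wt\beta}(\tau-s)$ in \eqref{def:Kbetainteract} arise on the probabilistic side: the paper produces them by converting the analytic iterated integrals into local-time integrals of the stochastic one-$\delta$ motions through the Donati--Martin--Yor formula \eqref{DY:law2-3} (Lemma~\ref{lem:conn}, Proposition~\ref{prop:main1}) and then crossing the hitting time $T_0^{\bi}$ with \eqref{exp:LT-3} and \eqref{def:Q21-3}; ``combining \eqref{FK:m=0} with the decomposition of the kernel'' does not substitute for this, and \eqref{FK:m=0} concerns $\{t<T_0^1\}$ while the $m=1$ case lives on $\{T_0^1\le t<T_0^2\}$. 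Second, your forward induction at $T_0^1$ requires the $(m-1)$-fold statement for initial conditions $z_0\in\CNwia$, i.e.\ on the boundary set where the pair $\bi_1$ is already in contact; but \eqref{sum:DBG} is stated only for $z_0\in\CNw$, and for boundary data the corresponding analytic object begins with an interacting kernel rather than the free kernel $K_{0,s_1}$ in \eqref{def:Qsummands}, so the induction hypothesis must be reformulated (this is precisely the role of the auxiliary functions $F_\ell$ and the backward recursion in the paper, where moreover the passage between consecutive levels is not the bare strong Markov property but Proposition~\ref{prop:connector}, whose hypotheses require the normalization conventions at $Z^{\bi}=0$ since $K_0^{\bbeta,\bs w}(T_0^1)=\infty$). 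As written, the claim that the reduction ``reproduces the iterated integral in \eqref{def:Qsummands} termwise'' is therefore unsubstantiated: both the local-time mechanism and the correctly strengthened induction statement are missing.
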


The proof of Theorem~\ref{thm:main2} will begin in Section~\ref{sec:FKproof} after we prove some preliminary stochastic integral representations in Section~\ref{sec:SGP2}. 

\begin{rmk}[Independence of weights]\label{rmk:weight}
The formulas in \eqref{goal:FK}, \eqref{FK:m=0+} and \eqref{sum:DBG} show that the expectations do not depend on the magnitudes of $w_\bi>0$. This ``cancellation'' effect is consistent with the fact that relatively larger $w_\bi$ means stronger interactions between $\{Z_t^{i\prime}\}$ and $\{Z^i_t\}$ via the SDE of $\{Z^\bi_t\}$ ($\bi=(i\prime,i)\in \mc E_N$), whereas larger $w_\bi$ reduces the integrals with respect to $\d L_s^\bi$ in \eqref{def:finalA} to ${A}^{\bbeta,\bs w}_0(\cdot)$. Here, for $t<T_\partial$, $Z_t^{\bi}$ takes values in $\Bbb C$ and obeys the following SDE:
\begin{align}\label{RM:SDE}
Z^\bi_t&=Z^\bi_0-\sum_{\bj\in \EN}\frac{\sigma(\bi)\cdot \sigma(\bj)}{2}\int_0^t \frac{w_\bj\hK_1^{\bbeta,\bj}(s)}{K_0^{\bbeta,\bs w}(s)}\left(\frac{1}{\overline{Z}^\bj_s}\right)\d s+\widetilde{W}^\bi_t,
\end{align}
where we set $\hK_1^{\bbeta,\bj}(s)\,\defeq\,\hK_1(\sqrt{2\beta_\bj}|Z^\bj_s|)$
using the Macdonald function $K_\nu(\cdot)$ of indices $\nu$ and $\widehat{K}_\nu(x)\,\defeq\,x^\nu K_\nu(x)$, and
$\{\wt{W}^\bi_t\}$ is a two-dimensional standard Brownian motion; see \cite[(3.10)]{C:SDBG2-3}. Also, $\sigma(\bk)\in \{-1,0,1\}^N$ denotes the column vector such that the $k\prime$-th component is $1$, the $k$-th component is $-1$, and the remaining components are zero when $\bk=(k\prime,k)\in \mc E_N$.
\qed  
\end{rmk}

\begin{rmk}[Normalization of the local times]\label{rmk:localtime}
The normalization of the local times $\{L^\bi_t\}$ in $\{A^{\bbeta,\bs w}_0(t)\}$
under $\P^{\bbeta,\bs w}$ can be obtained by reversing some steps of the proofs in this paper. We do not puruse the details here, but one can start by 
using the local absolute continuity between $\P^{\bbeta,\bs w}$ and the law of a stochastic one-$\delta$ motion over $[T_0^m,T_0^{m+1})$, to be recalled in Section~\ref{sec:FKproof}, and then referring to Section~\ref{sec:SGP2} for the normalization of local times under stochastic one-$\delta$ motions. \qed 
\end{rmk}

\addtocontents{toc}{\protect\setcounter{tocdepth}{1}}
\subsection{Representations by the stochastic one-$\bs \delta$ motions}\label{sec:SGP2}
\addtocontents{toc}{\protect\setcounter{tocdepth}{2}}
\addcontentsline{toc}{subsection}{\numberline{\thesubsection}{Representations by the stochastic one-$\delta$ motions}}
The first step toward the proof of Theorem~\ref{thm:main2} is Proposition~\ref{prop:main1} which we prove in this subsection. It gives some preliminary stochastic path integral representations of the summands in \eqref{P:series}. They are in terms of the stochastic one-$\delta$ motions $\{\ms Z_t\}=\{Z^j_t\}_{1\leq j\leq N}$ under 
$\P^\bi\,\defeq\,\P^{\beta_\bi\da,\bi}$
for $\bi\in \mc E_N$ \cite{C:SDBG1-3}: 
For all $N\geq 2$,  $z_0=(z_0^1,\cdots,z_0^N)\in \CN$, $\beta_\bi\in (0,\infty)$, $\bi=(i\prime,i)\in \mc E_N$, we set
\begin{align}\label{def:SDE2-3}
Z^j_t\,\defeq\, 
\begin{cases}
\displaystyle \frac{(z^{\bi\prime}_0+W^{\bi\prime}_t)+(\1_{j=i\prime}-\1_{j=i})Z^\bi_t}{\two},&j\in \bi,\\
\vspace{-.4cm}\\
z^j_0+W^j_t,&j\in \{1,\cdots,N\}\setminus \bi.
\end{cases}
\end{align}
Here, $\bi$ is also regarded as the set $\{i\prime,i\}$, $\mathcal E_N$ defined in \eqref{def:EN-3}, $z_0^{\bi\prime}$ has been defined in \eqref{def:unitary-3}, 
$\{Z^\bi_t\}$ is a version of the stochastic relative motion 
$\{Z_t\}$ under $\P^{\beta_\bi\da}_{z_0^\bi}$ from \cite{C:BES-3}, and  $\{W^{\bi\prime}_t\}\cup \{W^k_t\}_{k\in \{1,\cdots,N\}\setminus\bi}$ consist of 
$N-1$ many independent two-dimensional standard Brownian motions with zero initial conditions and independent of $\{Z^\bi_t\}$. Also, under $\P^\bi$, $\{Z^\bi_t\}$ admits a Markovian local time $\{L^\bi_t\}$, which is chosen to be subject to the normalization by Donati-Martin and Yor~\cite[Corollary~2.3]{DY:Krein-3} when viewed as the local time of $\{|Z^\bi_t|\}$ at level $0$; recall Remark~\ref{rmk:localtime}.
 The following lemma restates part of \cite[Theorem~2.1]{C:SDBG1-3}.

\begin{lem}
{\rm (1$\cc$)} Let $z^0\in\Bbb C$. For all $h\in \B_+(\R_+)$,
\begin{align}\label{DY:law2-3}
&\E^{\beta\da}_{z^0}\left[\int_0^t h(\tau)\d L_\tau \right]=
\begin{cases}
\displaystyle \int_0^t
\frac{P_{2s}(\two z^0)}{2K_0(\sqrt{2\beta}|z^0|)}
\int_{s}^{t}  \e^{-\beta \tau}\s^{\beta}(\tau-s)h(\tau)\d\tau \d s,&z^0\neq 0,\\
\vspace{-.4cm}\\
\displaystyle \int_0^t \frac{\e^{-\beta \tau}\s^\beta(\tau)}{4\pi}h(\tau)\d \tau,&z^0=0.
\end{cases}
\end{align}
\noindent {\rm (2$\cc$)} Let $z_0\in \CN$. For any $0<t<\infty$ and $F\in \B_+(\CN)$,
\begin{align}
\begin{split}
\label{DY:com-3}
\1_{\{t<T_0^\bi\}}\d \P_{z_0}^{\bi}&=\frac{\e^{-\beta_\bi t}K_0(\sqrt{2\beta_\bi}|Z^\bi_t|)}{K_0(\sqrt{2\beta_\bi}|Z^\bi_0|)}\d \P_{z_0}^{(0)}\;\;\mbox{ on }\F_t^0,\;\forall\; z_0:z_0^\bi\neq 0,
\end{split}\\
\E^{\bi}_{z_0}\left[\frac{\e^{\beta_\bi t}F(\ms Z_t)}{2K_0(\sqrt{2\beta_\bi}|Z^\bi_t|)};T^\bi_0\leq t\right]&=\E^{\bi}_{z_0}\left[\int_0^t\e^{\beta_\bi \tau}\E_{\ms Z_{\tau}}^{(0)}[F(\ms Z_{t-\tau})]
 \d L^\bi_\tau \right],\; \forall\; z_0\in \CN,\label{exp:LT-3}
\end{align}
where $\F_t^0\,\defeq\,\sigma(\ms Z_s;s\leq t)$, $\{\ms Z_t\}$ under $\P_{z_0}^{(0)}$ is a $2N$-dimensional standard Brownian motion starting from $z_0$, and $T^\bi_0\,\defeq\inf\{t\geq 0;Z^\bi_t=0\}$.
\end{lem}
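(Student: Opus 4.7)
The lemma encodes two structural features of $\{Z^\bi_t\}$ under $\P^\bi$: a Doob-type change of measure from 2D Brownian motion using $K_0(\sqrt{2\beta}|\cdot|)$ before the first visit of $Z^\bi$ to the origin, together with an excursion description at the origin carrying a Donati-Martin--Yor normalized local time. My plan is to establish \eqref{DY:com-3} first, then derive \eqref{DY:law2-3} from it via a master formula, and finally reduce \eqref{exp:LT-3} to the case $z_0^\bi = 0$.

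For \eqref{DY:com-3}, I would apply It\^o's formula to $(t,z)\mapsto \e^{-\beta t}K_0(\sqrt{2\beta}|z|)$ under a 2D Brownian motion. Since $K_0(\sqrt{2\beta}|\cdot|)$ solves $(\tfrac12\Delta-\beta)u=0$ on $\Bbb C\setminus\{0\}$, the ratio $M^\bi_t \defeq \e^{-\beta_\bi t}K_0(\sqrt{2\beta_\bi}|Z^\bi_t|)/K_0(\sqrt{2\beta_\bi}|Z^\bi_0|)$ is a positive local martingale under $\P^{(0)}_{z_0}$ up to $T_0^\bi$. Because the construction of $\P^\bi$ in \cite{C:BES-3} is, by design, the Girsanov transform of $\P^{(0)}_{z_0}$ with density $M^\bi_t$ on the event that $Z^\bi$ has not hit $0$, \eqref{DY:com-3} follows after localizing to $\F_t^0\cap\{t<T_0^\bi\}$.

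For \eqref{DY:law2-3} with $z^0=0$, I would invoke the excursion-theoretic construction of the one-$\delta$ motion from the origin, with $L$ normalized as the Krein-string local time in the sense of \cite{DY:Krein-3}. It\^o's master formula then yields
\begin{equation*}
\E^{\beta\da}_0\Bigl[\int_0^t h(\tau)\d L_\tau\Bigr]=\int_0^t h(\tau)\phi^\beta(\tau)\,\d\tau
\end{equation*}
for a density $\phi^\beta(\tau)$ equal to $\tfrac{\d}{\d\tau}\E^{\beta\da}_0[L_\tau]$. Matching the Laplace transform $\int_0^\infty \e^{-\lambda\tau}\phi^\beta(\tau)\,\d\tau$ against the resolvent of $|Z^\bi|$ at $0$ and using the explicit form \eqref{def:sbeta-3} of $\s^\beta$ (together with the identity $\int_0^\infty (\beta/(\lambda+\beta))^u\,\d u=1/\log((\lambda+\beta)/\beta)$) identifies $\phi^\beta(\tau)=\e^{-\beta\tau}\s^\beta(\tau)/(4\pi)$. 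For $z^0\neq 0$, no local time accrues before $T_0$, so the strong Markov property at $T_0$, combined with the hitting-time density $\P^{\beta\da}_{z^0}(T_0\in\d s)=2\pi\e^{-\beta s}P_{2s}(\sqrt{2}z^0)/K_0(\sqrt{2\beta}|z^0|)\,\d s$ --- which is computable from \eqref{DY:com-3} and the representation $K_0(\sqrt{2\beta}|z^0|)=\int_0^\infty \tfrac{1}{2s}\e^{-\beta s-|z^0|^2/(2s)}\,\d s$ --- reduces to the $z^0=0$ case and yields the stated formula after collecting the $\e^{-\beta s}$ factors.

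Lastly, for \eqref{exp:LT-3}, I would apply the strong Markov property at $T_0^\bi$ to reduce to $z_0^\bi=0$, and then exploit the independence of the orthogonal coordinates $(Z^{\bi\prime},Z^k)_{k\notin\bi}$, which under $\P^\bi$ are standard 2D Brownian motions independent of $Z^\bi$. The factor $\e^{\beta_\bi t}/(2K_0(\sqrt{2\beta_\bi}|Z^\bi_t|))$ inverts the Girsanov density \eqref{DY:com-3} over the final excursion of length $t-\tau$, so integrating $F(\ms Z_t)$ against the excursion kernel reproduces $\E^{(0)}_{\ms Z_\tau}[F(\ms Z_{t-\tau})]$; meanwhile, integration in $\tau\in[0,t]$ against $\d L^\bi_\tau$ picks up the weight $\e^{\beta_\bi\tau}$ after cancellation of the $\e^{-\beta_\bi\tau}\s^{\beta_\bi}(\tau)/(4\pi)$ coming from \eqref{DY:law2-3}. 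The principal obstacle throughout is accounting precisely for the Donati-Martin--Yor normalization of $L$ and the constant $4\pi$ entering $\s^\beta$; once these normalizations are fixed from the Krein-string description of $|Z^\bi|$, the remaining manipulations are algebraic.
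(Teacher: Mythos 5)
First, a point of reference: this paper does not actually prove the lemma --- it is introduced with ``the following lemma restates part of \cite[Theorem~2.1]{C:SDBG1-3}'', so your proposal can only be measured against the construction in that companion paper (and \cite{C:BES-3}, \cite{DY:Krein-3}), not against an argument contained here. Within that caveat, your outline for \eqref{DY:com-3} and for \eqref{DY:law2-3} is essentially the right reconstruction, and your constants check out: $K_0(\sqrt{2\beta}|\cdot|)$ is $(\tfrac12\Delta-\beta)$-harmonic off the origin, so the $\e^{-\beta t}K_0$-ratio is the correct positive local martingale; your hitting density equals $\pi\e^{-\beta s}P_s(z^0)/K_0(\sqrt{2\beta}|z^0|)$ (since $P_{2s}(\two z^0)=\tfrac12 P_s(z^0)$), which is exactly what one gets by differentiating $\P(T_0>t)$ computed from \eqref{DY:com-3} together with $K_0(\sqrt{2\beta}|z|)=\pi\int_0^\infty \e^{-\beta s}P_s(z)\,\d s$; the Laplace-transform identification $\int_0^\infty \e^{-(\lambda+\beta)\tau}\s^\beta(\tau)\,\d\tau = 4\pi/\log\bigl((\lambda+\beta)/\beta\bigr)$ correctly pins the density $\e^{-\beta\tau}\s^\beta(\tau)/(4\pi)$ under the Donati-Martin--Yor normalization; and composing the two pieces by the strong Markov property at $T_0$ reproduces the stated $z^0\neq 0$ formula with the factor $\tfrac12$. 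Be aware, though, that your step for \eqref{DY:com-3} is really a citation (``by design'' of the construction of $\P^{\bi}$) rather than a proof; that is acceptable only because the lemma itself is imported.

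The genuine gap is in \eqref{exp:LT-3}. You cannot ``invert the Girsanov density over the final excursion'': inverting \eqref{DY:com-3} is legitimate only from initial points with $z^{\bi}\neq 0$, whereas the points charged by $\d L^{\bi}_\tau$ have $Z^{\bi}_\tau=0$, where $K_0(\sqrt{2\beta_\bi}|Z^{\bi}_\tau|)=\infty$ and the density degenerates. What is actually needed is the entrance law of the excursion measure of $\{Z^{\bi}_t\}$ away from $0$ (equivalently, the explicit transition structure of the one-$\delta$ motion started from the origin), showing that the excursion kernel weighted by $\e^{\beta_\bi(t-\tau)}/\bigl(2K_0(\sqrt{2\beta_\bi}|Z^{\bi}_{t-\tau}|)\bigr)$ and swept against $\d L^{\bi}_\tau$ is precisely the free kernel producing $\E^{(0)}_{\ms Z_\tau}[F(\ms Z_{t-\tau})]$. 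Likewise the asserted ``cancellation of the $\e^{-\beta_\bi\tau}\s^{\beta_\bi}(\tau)/(4\pi)$ coming from \eqref{DY:law2-3}'' does not parse: \eqref{exp:LT-3} is an identity between two expectations under the same measure $\P^{\bi}_{z_0}$, with the local time sitting on the right-hand side, so there is no such density available to cancel. A sanity check isolates what is missing: taking $F\equiv 1$ and $z_0^{\bi}=0$, \eqref{exp:LT-3} reduces to $\E^{\bi}_{z_0}\bigl[\e^{\beta_\bi t}/\bigl(2K_0(\sqrt{2\beta_\bi}|Z^{\bi}_t|)\bigr)\bigr]=\int_0^t \s^{\beta_\bi}(\tau)\,\d\tau/(4\pi)$, an identity about the one-$\delta$ motion's law at time $t$ started from zero that your two listed ingredients (the pre-$T_0^{\bi}$ change of measure and the expected local time) do not by themselves deliver; it is exactly the excursion/resolvent input supplied in \cite{C:BES-3,C:SDBG1-3}, which your sketch gestures at but does not carry out.
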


In the sequel, we also work with $\P^\bi_{z_1,s}$ for $z_1\in \CN$ and $s\geq 0$ defined as follows: 
\begin{align}\label{shiftprob}
\P^{\bi,s}_{z_1}(\{\ms Z_{t};t\geq s\}\in \Gamma)\,\defeq\,\P^{\bi}_{z_1}(\{\ms Z_t;t\geq 0\}\in \Gamma). 
\end{align}
Hence, under $\P^{\bi}_{z_1,s}$, the stochastic one-$\delta$ motion starts at time $s$. Accordingly, these probability measures are more convenient for ``undoing'' the Markov property, which is a basic idea in the forthcoming proofs. 

\begin{prop}\label{prop:main1}
Fix $\bbeta\in (0,\infty)^{\mc E_N}$, $z_0\in \CN_{\sp}$, $0<t<\infty$ and $0\leq f\in \B_b(\CN)$.
For all $m\in \Bbb N$ and $\bi_1,\cdots,\bi_m\in \mc E_N$ with $\bi_1\neq \bi_2\neq \cdots\neq \bi_m$, it holds that 
\begin{align}
&\quad\;\int _{0< s_{1}< \cdots < s_{m}< t}\mathrm{d} \boldsymbol s_{m} P^{\bbeta;{\mathbf i}_{1},\ldots ,{
\mathbf i}_{m}}_{s_{1},\ldots ,s_{m},t}f(z_{0})\notag\\
&=2^mK_0^{\bbeta,\bi_1}(0)\int_\Omega \d \P^{\bi_1}_{z_0}(\omega_1)\int_0^t  \d L^{\bi_1}_{\tau_1}(\omega_1)\e^{\beta_{\bi_1} \tau_1}\notag\\
&\quad\times K^{\bbeta,\bi_{2}}_0(\tau_1)(\omega_1)\int_\Omega \d \P^{\bi_2, \tau_1}_{ \ms Z_{\tau_1}(\omega_1)}(\omega_2)\int_{\tau_1}^{t}  \d L^{\bi_2}_{\tau_2}(\omega_2)\e^{\beta_{\bi_2} (\tau_2-\tau_1)}\times \cdots\notag\\
&\quad  \times  K^{\bbeta,\bi_{m}}_0(\tau_{m-1})(\omega_{m-1})\int_\Omega \d \P^{\bi_m,\tau_{m-1}}_{ \ms Z_{\tau_{m-1}}(\omega_{m-1})}(\omega_m)\int_{\tau_{m-1}}^{t}  \d L^{\bi_m}_{\tau_m}(\omega_m)\e^{\beta_{\bi_m} (\tau_m-\tau_{m-1})}\notag\\
&\quad\times \E_{\ms Z_{\tau_m}(\omega_m)}^{(0)}[f(\ms Z_{t-\tau_m})].\label{rep:Plimit}
\end{align}
\end{prop}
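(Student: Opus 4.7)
The plan is to prove \eqref{rep:Plimit} by induction on $m$, tacitly extending the hypothesis to $z_0 \in \CN$ with $z_0^{\bi_1} \neq 0$; this class includes $\CNw$ and is preserved at the contact state $\ms Z_{\tau_1}$, where $(\ms Z_{\tau_1})^{\bi_1} = 0$ but $(\ms Z_{\tau_1})^{\bj} \neq 0$ for every $\bj \neq \bi_1$ almost surely under $\P^{\bi_1}_{z_0}$, in particular for $\bj = \bi_2$. For the base case $m = 1$, I would start from the right-hand side. Under $\P^{\bi_1}_{z_0}$ the decomposition \eqref{def:SDE2-3} makes $\{Z^{\bi_1}_t\}$, $\{Z^{\bi_1\prime}_t\}$, and $\{Z^j_t\}_{j \notin \bi_1}$ independent, the latter two being standard two-dimensional Brownian motions. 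Since $\d L^{\bi_1}_{\tau_1}$ charges only $\{Z^{\bi_1}_{\tau_1} = 0\}$, on which $Z^{i_1\prime}_{\tau_1} = Z^{i_1}_{\tau_1} = Z^{\bi_1\prime}_{\tau_1}/\sqrt 2$, I would integrate $\E^{(0)}_{\ms Z_{\tau_1}}[f(\ms Z_{t - \tau_1})]$ against the Brownian laws at time $\tau_1$ by Chapman--Kolmogorov, applying the Gaussian product identity $P_s(a,c) P_s(b,c) = P_{s/2}((a+b)/2, c) P_{2s}(a,b)$ and the scaling $P_{s/2}(a/\sqrt 2, b/\sqrt 2) = 2 P_s(a,b)$ to convert the $(i_1\prime, i_1)$-pair kernels into center-of-mass and relative coordinates. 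This reduces the local-time integral to $\E^{\beta_{\bi_1}\da}_{z_0^{\bi_1}}[\int_0^t \tilde h(\tau_1)\,\d L^{\bi_1}_{\tau_1}]$ with a deterministic $\tilde h$; applying \eqref{DY:law2-3} and independently unfolding the left-hand side via Definition~\ref{thm:DBGsgp} with the same Gaussian identity exhibits the termwise match.

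For the inductive step, I would first observe a recursion: the factor $K_0^{\bbeta,\bi_2}(\tau_1)(\omega_1)$ in the outer integrand cancels with the prefactor $2^{m-1} K_0^{\bbeta,\bi_2}(\ms Z_{\tau_1}(\omega_1))$ sitting in front of the inner $(m-1)$-layer expression (legitimate since $(\ms Z_{\tau_1})^{\bi_2} \neq 0$ a.s.), so the right-hand side of \eqref{rep:Plimit} equals
\begin{align*}
2 K_0^{\bbeta,\bi_1}(0)\,\E^{\bi_1}_{z_0}\!\left[\int_0^t \d L^{\bi_1}_{\tau_1}\,\e^{\beta_{\bi_1}\tau_1}\,R_{m-1}(\ms Z_{\tau_1}; t - \tau_1)\right],
\end{align*}
where $R_{m-1}(z; s)$ denotes the $(m-1)$-layer right-hand side for indices $\bi_2, \ldots, \bi_m$, starting state $z$, and horizon $s$. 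The inductive hypothesis expresses $R_{m-1}(\ms Z_{\tau_1}; t - \tau_1)$ as $\int_{0 < s_1' < \cdots < s_{m-1}' < t - \tau_1} P^{\bbeta;\bi_2,\ldots,\bi_m}_{s_1', \ldots, s_{m-1}', t - \tau_1} f(\ms Z_{\tau_1})\,\d \bs s'_{m-1}$. Substituting this and repeating the base-case maneuver---integrate $\{W^{\bi_1\prime}_{\tau_1}, W^j_{\tau_1}\}_{j \notin \bi_1}$ out against the leading Gaussian kernel $K_{0, s_1'}(\ms Z_{\tau_1}, \cdot)$ (merging the $\tau_1$-step with the $s_1'$-step via Chapman--Kolmogorov), then apply \eqref{DY:law2-3} to the resulting deterministic local-time integral---produces the outermost $s_1$-integration with factor $P_{2 s_1}(\two z_0^{\bi_1})\s^{\beta_{\bi_1}}(\tau_1 - s_1)$. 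The relabeling $s_k = s_{k-1}' + \tau_1$, $\tau_k = \tau_{k-1}' + \tau_1$ for $k \geq 2$ reconciles the result with $\int_{0 < s_1 < \cdots < s_m < t} \d \bs s_m\, P^{\bbeta;\bi_1,\ldots,\bi_m}_{s_1, \ldots, s_m, t} f(z_0)$.

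The main obstacle is the heat-kernel bookkeeping at the contact instant $\tau_1$: the coincident coordinates $Z^{i_1\prime}_{\tau_1} = Z^{i_1}_{\tau_1}$ have to be absorbed uniformly into the $u_0^{/\bi_2}$-format of $K_{0, s_1'}(\ms Z_{\tau_1}, \cdot)$, with a case analysis by $|\bi_1 \cap \bi_2| \in \{0, 1\}$ (since $\bi_1 \neq \bi_2$). The Gaussian product identity together with the built-in scaling factor of $2$ disposes of each case and accounts for the prefactor $2^m$ on the right-hand side.
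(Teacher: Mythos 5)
Your proposal is correct and is essentially the paper's own argument: the single-layer conversion you describe (support of $\d L^{\bi_1}$, the independence structure of \eqref{def:SDE2-3}, Chapman--Kolmogorov plus the Gaussian product/scaling identity, then the Donati-Martin--Yor formula \eqref{DY:law2-3}) is exactly the content and proof of Lemma~\ref{lem:conn}, which the paper applies layer by layer after reordering the time integrals instead of packaging the iteration as an induction on $m$. The only presentational difference is that your outside-in induction requires stating the proposition for initial points with one coincident pair ($z_0^{\bi_1}\neq 0$ only) and an a.e.\ argument under $\d L^{\bi_1}$ for $K_0^{\bbeta,\bi_2}(\tau_1)<\infty$, which the paper handles in the same spirit inside its iteration.
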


The following lemma handles the key mechanism for the proof of Proposition~\ref{prop:main1}.

\begin{lem}\label{lem:conn}
For all $\bi\in \mc E_N$, $z_0\in \CN$ with $z_0^\bi\neq 0$, $F\in \B_+( \CN\times [0,t])$ and $0\leq \tau'<t<\infty$,
\begin{align}
&\quad\;2K_0(\sqrt{2\beta_\bi}|z^\bi_0|)\E^{\bi,\tau'}_{z_0}\left[\int_{\tau'}^t (\d L^\bi_\tau) \e^{\beta_\bi(\tau-\tau')}F(\ms Z_\tau,\tau)\right]\notag\\
&=\int_{\tau'}^t\d s   
\int_{s}^t \d \tau 
\int _{{\mathbb C}^{N-1}}\mathrm{d}z^{/{\mathbf i}}K_{\tau',s}(z_0,z^{/\bi})
 \int_{\Bbb C^{N-1}}
\d \tilde{z}^{\backslash\bi} 
K^{\beta_\bi;\bi;\interact}_{s,\tau}(z^{/\bi},\tilde{z}^{\backslash\bi})F(\tilde{z}^{\backslash\!\backslash\bi},\label{formula:conn}
\tau),
\end{align}
where the variables for integration on the right-hand side are defined in Theorem~\ref{thm:DBGsgp}. 
\end{lem}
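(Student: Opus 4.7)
The plan is to reduce the left-hand side of \eqref{formula:conn} to an application of the local-time identity \eqref{DY:law2-3} for the stochastic relative motion, and then match the resulting double integral to the right-hand side via Chapman--Kolmogorov.

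Under $\P^{\bi,\tau'}_{z_0}$, the decomposition \eqref{def:SDE2-3} (shifted to start at time $\tau'$) realizes $\{Z^\bi_\tau\}_{\tau\geq\tau'}$ as a stochastic relative motion from $z_0^\bi$, independent of the $N-1$ two-dimensional Brownian motions $W^{\bi'}$ and $\{W^k\}_{k\notin\bi}$ starting at $z_0^{\bi'}$ and $z_0^k$. Since $\d L^\bi_\tau$ is carried by $\{Z^\bi_\tau=0\}$ and \eqref{def:SDE2-3} then forces $\ms Z_\tau=\tilde z^{\bb\bi}$ with $\tilde z^{\bi'}=z_0^{\bi'}+W^{\bi'}_{\tau-\tau'}$ and $\tilde z^k=z_0^k+W^k_{\tau-\tau'}$ for $k\notin\bi$, Tonelli lets us average $F(\tilde z^{\bb\bi},\tau)$ over the auxiliary Brownian laws first. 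The left-hand side thereby equals
\[
2K_0\bigl(\sqrt{2\beta_\bi}|z_0^\bi|\bigr)\,\E^{\beta_\bi\da,\tau'}_{z_0^\bi}\!\left[\int_{\tau'}^t \e^{\beta_\bi(\tau-\tau')}\,\bar F(\tau)\,\d L^\bi_\tau\right],\quad \bar F(\tau)\,\defeq\,\int_{\Bbb C^{N-1}} P_{\tau-\tau'}(z_0^{\bi'},\tilde z^{\bi'})\!\prod_{k\notin\bi}\! P_{\tau-\tau'}(z_0^k,\tilde z^k)\,F(\tilde z^{\bb\bi},\tau)\,\d\tilde z^{\b\bi}.
\]
After the time shift $\sigma=\tau-\tau'$, formula \eqref{DY:law2-3} applies (the hypothesis $z_0^\bi\neq 0$ is given), with $h(\sigma)=\e^{\beta_\bi\sigma}\bar F(\sigma+\tau')$: the prefactor $2K_0(\sqrt{2\beta_\bi}|z_0^\bi|)$ exactly cancels the denominator of \eqref{DY:law2-3}, and $\e^{\beta_\bi\sigma}$ absorbs $\e^{-\beta_\bi\sigma}$. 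Reversing the shift produces
\[
\int_{\tau'}^t\!\d s\int_s^t\!\d\tau\;P_{2(s-\tau')}(\sqrt 2\, z_0^\bi)\,\s^{\beta_\bi}(\tau-s)\,\bar F(\tau).
\]

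To match the right-hand side of \eqref{formula:conn}, I insert an intermediate integration over $z^{/\bi}$ at time $s$. For each $k\notin\bi$, Chapman--Kolmogorov gives $P_{\tau-\tau'}(z_0^k,\tilde z^k)=\int_{\Bbb C}P_{s-\tau'}(z_0^k,z^k)P_{\tau-s}(z^k,\tilde z^k)\,\d z^k$, while the $\bi$-coordinate analogue is the Gaussian identity
\[
P_{2(s-\tau')}(\sqrt 2\, z_0^\bi)\,P_{\tau-\tau'}(z_0^{\bi'},\tilde z^{\bi'})=\int_{\Bbb C}P_{s-\tau'}(z_0^{i'},z^i)\,P_{s-\tau'}(z_0^i,z^i)\,P_{\tau-s}(\sqrt 2\, z^i,\tilde z^{\bi'})\,\d z^i,
\]
which follows from the polarization $|z^i-z_0^{i'}|^2+|z^i-z_0^i|^2=2|z^i-z_0^{\bi'}/\sqrt 2|^2+|z_0^\bi|^2$ (using $z_0^{i'}+z_0^i=\sqrt 2\, z_0^{\bi'}$ and $z_0^{i'}-z_0^i=\sqrt 2\, z_0^\bi$ from \eqref{def:unitary-3}) to peel off the factor $P_{2(s-\tau')}(\sqrt 2\, z_0^\bi)$, followed by the change of variables $\sqrt 2\, z^i=z_0^{\bi'}+w$ which reduces the remaining integral to a single Chapman--Kolmogorov convolution evaluated at $\tilde z^{\bi'}-z_0^{\bi'}$. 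Multiplying through by $\s^{\beta_\bi}(\tau-s)$ and consulting \eqref{def:K}--\eqref{def:Kbetainteract} identifies the integrand as $K_{\tau',s}(z_0,z^{/\bi})\,K^{\beta_\bi;\bi;\interact}_{s,\tau}(z^{/\bi},\tilde z^{\b\bi})$, yielding \eqref{formula:conn}. The main obstacle is not conceptual but combinatorial: keeping the slash, single-backslash, and double-backslash identifications from \eqref{def:uslash}--\eqref{def:udbslash} consistent with the $\sqrt 2$-rescaling between the center-of-mass and relative coordinates of the pair $\bi$.
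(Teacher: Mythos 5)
Your proof is correct and follows essentially the same route as the paper's: integrate out the independent Brownian components, apply \eqref{DY:law2-3} (after a time shift), and then reinsert the intermediate integration over $z^{/\bi}$ via Chapman--Kolmogorov for the $k\notin\bi$ coordinates together with the Gaussian identity \eqref{ChapKol:1} for the $\bi$-pair, which you justify by the same polarization underlying the paper's "readily verified" heat-kernel product identity.
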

\begin{proof}
The main step of the proof is to find the explicit formula of the expectation in \eqref{formula:conn}. To this end,
we first shift time back to $0$ from $\tau'$ so that
the expectation in \eqref{formula:conn} satisfies 
\begin{align}
\E^{\bi,\tau'}_{z_0}\biggl[\int_{\tau'}^t (\d_\tau L^\bi_\tau) \e^{\beta_\bi(\tau-\tau')}F(\ms Z_\tau,\tau)\biggr]
&=\E^{\bi}_{z_0}\biggl[\int_{0}^{t-\tau'} (\d_\tau L^\bi_\tau)\e^{\beta_\bi \tau}F(\ms Z_\tau,\tau'+\tau)\biggr]\notag\\
&=\E^{\bi}_{z_0}\biggl[\int_{0}^{t-\tau'} (\d_\tau L^\bi_\tau) \e^{\beta_\bi \tau}
F( \ms Z_\tau^{\backslash\!\backslash\!\backslash \bi}
,\tau'+\tau)\biggr].\label{conn:1}
\end{align}
Specifically, the first equality follows by replacing $\tau-\tau'$ on its left-hand side with $\tau$. Also, in the second equality, $\ms Z_\tau^{\backslash\!\backslash\!\backslash\bi}$ is a random vector such that the $k$-th components are given by 
\begin{align*}
Z_\tau^{k\bbb\bi}\,\defeq\begin{cases}
(Z_0^{\bi\prime}+W_\tau^{\bi\prime})/\two ,&k\in \bi,\\
Z^k_0+W^k_\tau,&k\notin \bi,
\end{cases}
\end{align*}
and \eqref{conn:1} holds
since $\d_\tau L_\tau^\bi$ is supported in $\{\tau;Z^\bi_\tau=0\}$, and so, by \eqref{def:SDE2-3}, 
\begin{align}\label{eq:Ztbstate}
\mbox{under }\P^{\bi},\quad
Z^\bi_\tau=0\Longrightarrow
\begin{cases}
\displaystyle Z^{i\prime}_\tau=\frac{Z^{\bi\prime}_\tau+Z^{\bi}_\tau}{\two}=\frac{Z^{\bi\prime}_\tau}{\two}=\frac{Z_0^{\bi\prime}+W_\tau^{\bi\prime}}{\two},\\
\vspace{-.4cm}\\
\displaystyle Z^{i}_\tau=\frac{Z^{\bi\prime}_\tau-Z^{\bi}_\tau}{\two}=\frac{Z^{\bi\prime}_\tau}{\two}=\frac{Z_0^{\bi\prime}+W_\tau^{\bi\prime}}{\two},\\
\vspace{-.4cm}\\
Z^k_\tau=Z^k_0+W^k_\tau, \quad\forall\; k\notin \bi.
\end{cases}
\end{align}

Next, we continue from \eqref{conn:1} by using the notation in \eqref{def:uslash}--\eqref{def:udbslash}, the notation of multiplication columns defined in \eqref{def:column-3}, and
the independence of the $N$ processes $\{W_t^{\bi\prime}\}$,  $\{Z^\bi_t\}$ and $\{W_t^k\}$ for $k\notin \bi$. Also, let $\tilde{z}^{\bi\prime}$ denote the state of $W^{\bi\prime}_\tau$.
This gives
\begin{align}
&\quad\;\E^{\bi,\tau'}_{z_0}\left[\int_{\tau'}^t (\d_\tau L^\bi_\tau) \e^{\beta_\bi(\tau-\tau')}F(\ms Z_\tau,\tau)\right]\notag\\
&=\E^{\bi}_{z_0}\biggl[\int_{0}^{t-\tau'} (\d_\tau L^\bi_\tau) \e^{\beta_\bi \tau}\int_{\Bbb C^{N-1}}\d \tilde{z}^{\backslash\bi}
\begin{bmatrix}
P_{\tau}(z_0^{\bi\prime},\tilde{z}^{\bi\prime}
)\\
\prod_{k:k\notin \bi}P_{\tau}(z_0^k,\tilde{z}^k)
\end{bmatrix}_\times 
F( \tilde{z}^{\backslash\!\backslash \bi},\tau'+\tau)\biggr]\notag,
\end{align}
where $\tilde{z}^{\backslash\!\backslash \bi}$ is a vector of $\R^2$-variables 
defined in \eqref{def:udbslash}
 and $\d \tilde{z}^{\backslash \bi}$ is defined in \eqref{def:integrators}. Moreover, the last integral can be made fully explicit as an iterated Riemann integral by using \eqref{DY:law2-3} for $z^0=z_0^\bi\neq 0$, $t$ replaced by $t-\tau'$, and $\beta=\beta_\bi$:
\begin{align}
&\quad\;\E^{\bi,\tau'}_{z_0}\left[\int_{\tau'}^t (\d_\tau L^\bi_\tau) \e^{\beta_\bi(\tau-\tau')}F(\ms Z_\tau,\tau)\right]\notag\\
&=\int_0^{t-\tau'}\d s
\frac{P_{2s}(\two z^\bi_0)}{2K_0(\sqrt{2\beta_\bi}|z^\bi_0|)}
\int_{s}^{t-\tau'}\d \tau  \e^{-\beta_\bi \tau}\s^{\beta_\bi}(\tau-s)\e^{\beta_\bi \tau}\int_{\Bbb C^{N-1}}\d \tilde{z}^{\backslash \bi}
\begin{bmatrix}
P_{\tau}(z_0^{\bi\prime},\tilde{z}^{\bi\prime}
)\\
\prod_{k:k\notin \bi}P_{\tau}(z_0^k,\tilde{z}^k)
\end{bmatrix}_\times \notag\\
&\quad\;\times F(\tilde{z}^{\backslash\!\backslash \bi},\tau'+\tau)\notag\\
&=\int_{\tau'}^{t}\d s'
\frac{P_{2(s'-\tau')}(\two z^\bi_0)}{2K_0(\sqrt{2\beta_\bi}|z^\bi_0|)}
\int_{s'}^{t}\d \tau''  \s^{\beta_\bi}(\tau''-s')\int_{\Bbb C^{N-1}}\d \tilde{z}^{\backslash \bi}
\begin{bmatrix}
P_{\tau''-\tau'}(z_0^{\bi\prime},\tilde{z}^{\bi\prime}
)\\
\prod_{k:k\notin \bi}P_{\tau''-\tau'}(z_0^k,\tilde{z}^k)
\end{bmatrix}_\times\notag \\
&\quad \times F( \tilde{z}^{\backslash\!\backslash \bi},\tau'')\label{ChapKol:00}
\end{align}
by changing variables according to $s=s'-\tau'$ and $\tau=\tau''-\tau'$. 
 To simplify the right-hand side, we use the Chapman--Kolmogorov equation to get \eqref{ChapKol:0} and the equality next to it: for $\tau'<s'<\tau''$,
\begin{align}
\prod_{k:k\notin \bi}P_{\tau''-\tau'}(z_0^k,\tilde{z}^k)&=\int_{\Bbb C^{N-2}}\bigotimes_{k:k\notin \bi}\d z^{k}  \biggl(\prod_{k:k\notin \bi}P_{s'-\tau'}(z_0^k,z^k)\biggr)\notag\\
&\quad \;\biggl(\prod_{k:k\notin \bi}P_{\tau''-s'}(z^k,\tilde{z}^k)\biggr),\label{ChapKol:0}\\
P_{2(s'-\tau')}(\two z_0^{\bi})P_{\tau''-\tau'}(z_0^{\bi\prime},\tilde{z}^{\bi\prime})&=P_{2(s'-\tau')}(\two z_0^{\bi},0)\int _{\Bbb C}\d z^i2P_{s'-\tau'}(z_0^{\bi\prime},\two z^i)P_{\tau''-s'}(\two z^i,\tilde{z}^{\bi\prime})\notag\\
&=\int_{\Bbb C}\d z^iP_{s'-\tau'}(z_0^{\bi\prime},\two z^i)P_{s'-\tau'}(z_0^{\bi},0)P_{\tau''-s'}(\two z^i, \tilde{z}^{\bi\prime})\notag\\
&=\int_{\Bbb C}\d z^iP_{s'-\tau'}(z_0^{i\prime},z^i)P_{s'-\tau'}(z_0^i,z^i)P_{\tau''-s'}(\two z^i,\tilde{z}^{\bi\prime}),\label{ChapKol:1}
\end{align}
where the last equality uses the following readily verified identity:
\[
P_t(z^0,z^1)P_t(\tilde{z}^0,\tilde{z}^1)=P_t\left(\frac{z^0+\tilde{z}^0}{\two},\frac{z^1+\tilde{z}^1}{\two}\right)P_t\left(\frac{z^0-\tilde{z}^0}{\two},\frac{z^1-\tilde{z}_1}{\two}\right).
\]
Applying \eqref{ChapKol:0} and \eqref{ChapKol:1} to \eqref{ChapKol:00} gives
\begin{align}
&\quad\;\E^{\bi,\tau'}_{z_0}\left[\int_{\tau'}^t (\d_\tau L^\bi_\tau) \e^{\beta_\bi(\tau-\tau')}F(\ms Z_\tau,\tau)\right]\notag\\
\begin{split}
&=\frac{1}{2K_0(\sqrt{2\beta_\bi}|z^\bi_0|)}\int_{\tau'}^t\d s'   
\int_{s'}^t \d \tau'' 
\int _{{\mathbb C}^{N-1}}\mathrm{d}z^{/{\mathbf i}}\int_{\Bbb C^{N-1}}
\d \tilde{z}^{\backslash\bi}
\\
&\quad\; \begin{bmatrix}
P_{s'-\tau'}(z_0^{i\prime},z^{i})\\
P_{s'-\tau'}(z_0^{i},z^{i})\\
\prod_{k:k\notin \bi}P_{s'-\tau'}(z_0^{k},z^{k})
\end{bmatrix}_\times 
\begin{bmatrix}
P_{\tau''-s'}(\two z^{i},\tilde{z}^{\bi\prime}
)\\
\s^{\beta_\bi}(\tau''-s')\\
\prod_{k:k\notin \bi}P_{\tau''-s'}(z^{k},\tilde{z}^k)
\end{bmatrix}_\times  F(\tilde{z}^{\backslash\!\backslash \bi},\tau''),\label{conn:2}
\end{split}
\end{align}
where we use the integrator $\mathrm{d}z^{/{\mathbf i}}$ defined in \eqref{def:integrators}.
To finish the proof, note that $z^i,z^k$ in \eqref{conn:2} can be rewritten as $z^{i/\bi}=z^{i\prime/\bi},z^{k/\bi}$, respectively, by \eqref{def:uslash},
and recall  the kernels defined in \eqref{def:Kbeta} and \eqref{def:Kbetainteract}. So, \eqref{formula:conn} follows upon multiplying both sides of \eqref{conn:2} by $2K_0(\sqrt{2\beta_\bi}|z^\bi_0|)$. 
\end{proof}

\begin{proof}[Proof of Proposition~\ref{prop:main1}]
Fix $t>0$.
Let us begin by recalling the definition \eqref{def:Qsummands} of  $P^{\bbeta;{\mathbf i}_{1},\ldots ,{
\mathbf i}_{m}}_{s_{1},\ldots ,s_{m},t}f(z_{0})$ and
rewriting the right-hand side of \eqref{def:Qsummands}
such that 
 the kernels $ K^{\beta ;{\mathbf i}}_{s,\tau ,s'}$  and $K^{\beta;\bi;\interact}_{s,\tau}$ defined in  \eqref{def:Kbeta}  and \eqref{def:Kbetainteract} are explicitly displayed:
 \begin{align*}
 &\quad\;\int _{0< s_{1}< \cdots < s_{m}< t}\mathrm{d} \boldsymbol s_{m} P^{\bbeta;{\mathbf i}_{1},\ldots ,{
\mathbf i}_{m}}_{s_{1},\ldots ,s_{m},t}f(z_{0})\notag\\
&=\int_{0<s_1<\cdots<s_m<t}\d \bs s_m \int_{\Bbb C^{N-1}}\d z_1^{/\bi_1}K_{0,s_1}(z_0,z_1^{/\bi_1})\\
&\quad\;\int_{\Bbb C^{N-1}}\d z_2^{/\bi_2}
\int_{s_1}^{s_2}\d \tau_1\int_{\Bbb C^{N-1}}\d \tilde{z}_1^{\backslash\bi_1} K^{\beta_{\bi_1};\bi_1;\interact}_{s_1,\tau_1}(z_1^{/\bi_1},\tilde{z}_1^{\backslash\bi_1})K_{\tau_1,s_2}(\tilde{z}^{\backslash\!\backslash\bi_1}_1,z_2^{/\bi_2})\cdots \\
&\quad\;\int_{\Bbb C^{N-1}}\d z_m^{/\bi_m}
\int_{s_{m-1}}^{s_m}\d \tau_{m-1}\int_{\Bbb C^{N-1}}\d \tilde{z}_{m-1}^{\backslash\bi_{m-1}} K^{\beta_{\bi_{m-1}};\bi_{m-1};\interact}_{s_{m-1},\tau_{m-1}}(z_{m-1}^{/\bi_{m-1}},\tilde{z}_{m-1}^{\backslash\bi_{m-1}})\\
&\quad \times K_{\tau_{m-1},s_m}(\tilde{z}^{\backslash\!\backslash\bi_{m-1}}_{m-1},z_m^{/\bi_m})\\
&\quad\; \int_{\CN}\d z_{m+1}\int_{s_m}^t \d \tau_m \int_{\Bbb C^{N-1}}\d \tilde{z}_m^{\backslash \bi_m}K^{\beta_{\bi_m};\bi_m;\interact}_{s_m,\tau_m}(z_m^{/\bi_m},\tilde{z}_m^{\backslash \bi_m})K_{\tau_m,t}(\tilde{z}_m^{\backslash\!\backslash \bi_m},z_{m+1})f(z_{m+1}).
 \end{align*}
 To use this expression, first, we change the order of time integration as follows:
 \begin{align*}
 \int_{0<s_1<\cdots<s_m<t}\d \bs s_m\int_{s_1}^{s_2}\d \tau_1\cdots \int_{s_m}^t \d \tau_m &=\int_{0<s_1<\tau_1<s_2<\cdots<s_m<\tau_m<t}\\
 &=\int_0^t \d s_1\int_{s_1}^t \d \tau_1\int_{\tau_1}^t\d s_2\cdots \int_{\tau_{m-1}}^t\d s_m\int_{s_m}^t \d \tau_m. 
 \end{align*}
Along with a change of the order of space integration, we get
 \begin{align}
 &\quad\;\int _{0< s_{1}< \cdots < s_{m}< t}\mathrm{d} \boldsymbol s_{m} P^{\bbeta;{\mathbf i}_{1},\ldots ,{
\mathbf i}_{m}}_{s_{1},\ldots ,s_{m},t}f(z_{0})\notag\\
&=\int_{0}^t \d s_1\int_{s_1}^t \d \tau_1 \int_{\Bbb C^{N-1}}\d z_1^{/\bi_1}K_{0,s_1}(z_0,z_1^{/\bi_1}) \int_{\Bbb C^{N-1}}\d \tilde{z}_1^{\backslash \bi_1} K^{\beta_{\bi_1};\bi_1;\interact}_{s_1,\tau_1}(z_1^{/\bi_1},\tilde{z}_1^{\backslash \bi_1})\notag\\
&\quad\;\int_{\tau_1}^t \d s_2\int_{s_2}^t \d \tau_2 \int_{\Bbb C^{N-1}}\d z_2^{/\bi_2}K_{\tau_1,s_2}(\tilde{z}^{\backslash\!\backslash\bi_{1}}_{1},z_2^{/\bi_2}) \int_{\Bbb C^{N-1}}\d \tilde{z}_2^{\backslash \bi_2} K^{\beta_{\bi_2};\bi_2;\interact}_{s_2,\tau_2}(z_2^{/\bi_2},\tilde{z}_2^{\backslash \bi_2})\cdots\notag\\
&\quad\;\int_{\tau_{m-2}}^t \d s_{m-1}\int_{s_{m-1}}^t \d \tau_{m-1} \int_{\Bbb C^{N-1}}\d z_{m-1}^{/\bi_{m-1}}K_{\tau_{m-2},s_{m-1}}(\tilde{z}^{\backslash\!\backslash\bi_{m-2}}_{m-2},z_{m-1}^{/\bi_{m-1}})\notag\\
&\quad\; \int_{\Bbb C^{N-1}}\d \tilde{z}_{m-1}^{\backslash \bi_{m-1}} K^{\beta_{\bi_{m-1}};\bi_{m-1};\interact}_{s_{m-1},\tau_{m-1}}(z_{m-1}^{/\bi_{m-1}},\tilde{z}_{m-1}^{\backslash \bi_{m-1}})\notag\\
&\quad\;\int_{\tau_{m-1}}^t \d s_m\int_{s_m}^t \d \tau_m \int_{\Bbb C^{N-1}}\d z_m^{/\bi_m}K_{\tau_{m-1},s_m}(\tilde{z}^{\backslash\!\backslash\bi_{m-1}}_{m-1},z_m^{/\bi_m})\notag \\
&\quad\;\int_{\Bbb C^{N-1}}\d \tilde{z}_m^{\backslash \bi_m} K^{\beta_{\bi_m};\bi_m;\interact}_{s_m,\tau_m}(z_m^{/\bi_m},\tilde{z}_m^{\backslash \bi_m})\notag\\
&\quad\;\int_{\CN}\d z_{m+1}K_{\tau_m,t}(\tilde{z}_m^{\backslash\!\backslash \bi_m},z_{m+1})f(z_{m+1}).\label{propmain1:1}
\end{align}
In the following three steps, we show the required probabilistic representation by viewing the iterated integral on the right-hand side of \eqref{propmain1:1}
 backward.\medskip 

\noindent {\bf Step 1.}
By the definition \eqref{def:K} of $K_{\tau,s'}$, the integral in the last line of \eqref{propmain1:1} satisfies 
\begin{align}
\int_{\CN}\d z_{m+1} K_{\tau_m,t}(\tilde{z}_m^{\backslash\!\backslash \bi_m},z_{m+1})f(z_{m+1})=\E^{(0)}_{\tilde{z}_m^{\backslash\!\backslash \bi_m}}[f(\ms Z_{t-\tau_m})]\label{propmain1:2}
\end{align}
since, by assumption, the components $\{Z^k_t\}$ of $\{\ms Z_t\}$ under $\P^{(0)}$ are independent two-dimensional standard Brownian motions.\medskip 

\noindent {\bf Step 2.} Next, we handle the remaining iterated integral of a smaller fold in \eqref{propmain1:1}.
For all $2\leq \ell\leq m$ and nonnegative $F$, it follows from Lemma~\ref{lem:conn} that
\begin{align}
&\quad\;\int_{\tau_{\ell-1}}^t \d s_{\ell}\int_{s_{\ell}}^t \d \tau_{\ell} \int_{\Bbb C^{N-1}}\d z_\ell^{/\bi_\ell}K_{\tau_{\ell-1},s_\ell}(\tilde{z}^{\backslash\!\backslash\bi_{\ell-1}}_{\ell-1},z_\ell^{/\bi_\ell}) \int_{\Bbb C^{N-1}}\d \tilde{z}_\ell^{\backslash \bi_\ell} K^{\beta_{\bi_\ell};\bi_\ell;\interact}_{s_\ell,\tau_{\ell}}(z_\ell^{/\bi_\ell},\tilde{z}_\ell^{\backslash \bi_\ell})F(z_\ell^{\backslash\!\backslash \bi_\ell},\tau_\ell)\notag\\
&=2K_0(\sqrt{2\beta_{\bi_\ell}}|(\tilde{z}^{\backslash\!\backslash\bi_{\ell-1}}_{\ell-1})^{\bi_\ell}|)\E^{\bi_\ell,\tau_{\ell-1}}_{\tilde{z}^{\backslash\!\backslash\bi_{\ell-1}}_{\ell-1}}\biggr[\int_{\tau_{\ell-1}}^t (\d_{\tau_\ell} L^{\bi_\ell}_{\tau_\ell})\e^{\beta_{\bi_\ell}(\tau_\ell-\tau_{\ell-1})}F(\ms Z_{\tau_\ell},\tau_\ell)\biggr].\label{propmain1:3}
\end{align}
Note that the right-hand side depends on the space and time variables only through $\tilde{z}^{\backslash\!\backslash\bi_{\ell-1}}_{\ell-1}$ and $\tau_{\ell-1}$, as we have kept $t$ fixed. Similarly, we have
\begin{align}
&\quad\;\int_{0}^t \d s_1\int_{s_1}^t \d \tau_1 \int_{\Bbb C^{N-1}}\d z_1^{/\bi_1}K_{0,s_1}(z_0,z_1^{/\bi_1}) \int_{\Bbb C^{N-1}}\d \tilde{z}_1^{\backslash \bi_1} K^{\beta_{\bi_1};\bi_1;\interact}_{s_1,\tau_1}(z_1^{/\bi_1},\tilde{z}_1^{\backslash \bi_1})F(z_1^{\backslash\!\backslash \bi_1},\tau_1)\notag\\
&=2K_0(\sqrt{2\beta_{\bi_1}}|z_0^{\bi_1}|)\E^{\bi_1}_{z_0}\left[\int_{0}^t (\d_{\tau_1} L^{\bi_1}_{\tau_1}) \e^{\beta_{\bi_1} \tau_1}F(\ms Z_{\tau_1},\tau_1)\right],\label{propmain1:4}
\end{align}
since $\E^{\bi}_{z_0,0}=\E^{\bi}_{z_0}$ by definition.\medskip 

\noindent {\bf Step 3.} Finally, apply \eqref{propmain1:2}--\eqref{propmain1:4} to the right-hand side of \eqref{propmain1:1}. This way, we find that
\begin{align}
 &\quad\;\int _{0< s_{1}< \cdots < s_{m}< t}\mathrm{d} \boldsymbol s_{m} P^{\bbeta;{\mathbf i}_{1},\ldots ,{
\mathbf i}_{m}}_{s_{1},\ldots ,s_{m},t}f(z_{0})\notag\\
&=2K_0(\sqrt{2\beta_{\bi_1}}|z^{\bi_1}_{0}|)\int_\Omega \d \P^{\bi_1}_{z_0}(\omega_1)\int_0^t  \d  L^{\bi_1}_{\tau_1}(\omega_1)\e^{\beta_{\bi_1} \tau_1}\notag\\
&\quad\times 2K_0(\sqrt{2\beta_{\bi_2}}|Z^{\bi_2}_{\tau_1}(\omega_1)|)\int_\Omega \d \P^{\bi_2,\tau_1}_{ \ms Z_{\tau_1}(\omega_1)}(\omega_2)\int_{\tau_1}^{t}  \d L^{\bi_2}_{\tau_2}(\omega_2)\e^{\beta_{\bi_2} (\tau_2-\tau_1)}\times \cdots\notag\\
&\quad  \times  2K_0(\sqrt{2\beta_{\bi_m}}|Z^{\bi_m}_{\tau_{m-1}}(\omega_{m-1})|)\int_\Omega \d \P^{\bi_m,\tau_{m-1}}_{ \ms Z_{\tau_{m-1}}(\omega_{m-1})}(\omega_m)\int_{\tau_{m-1}}^{t}  \d L^{\bi_m}_{\tau_m}(\omega_m)\e^{\beta_{\bi_m} (\tau_m-\tau_{m-1})}\notag\\
&\quad\times \E_{\ms Z_{\tau_m}(\omega_m)}^{(0)}[f(\ms Z_{t-\tau_m})],\notag
\end{align}
which is the required formula in \eqref{rep:Plimit} upon writing 
\[
{\textstyle 2K_0(\sqrt{2\beta_{\bi_1}}|z^{\bi_1}_{0}|)=2K_0^{\bbeta,\bi_1}(0)},\quad 
{\textstyle 2K_0(\sqrt{2\beta_{\bi_{\ell+1}}}|Z^{\bi_{\ell+1}}_{\tau_\ell}(\omega_\ell)|)}=2K^{\bbeta,\bi_{\ell+1}}_0(\tau_\ell)(\omega_\ell),\quad 1\leq \ell\leq m.
\]
The proof is complete.
\end{proof}

\subsection{Proof of the Feynman--Kac-type formulas}\label{sec:FKproof}
To prove Theorem~\ref{thm:main2}, we need a few more notations and identities for the stochastic many-$\delta$ motions. These additional notations extend those stated before Theorem~\ref{thm:main2}. 

First, for ${A}^{\bbeta,\bs w}_0(t)$ defined in \eqref{def:finalA}, we write
\begin{align}
{A}^{\bbeta,\bs w}_0(t)&= \mathring{A}^{\bbeta,\bs w}_0(t)+\overline{A}^{\bbeta,\bs w}_0(t),\label{dec:finalA}
\end{align}
where
\begin{align}
\mathring{A}^{\bbeta,\bs w}_0(t)&\,\defeq\, \sum_{\bi\in\mc E_N}\mathring{A}^{\bbeta,\bs w,\bi}_0(t),\label{def:finalrA}\\
\mathring{A}^{\bbeta,\bs w,\bi}_0(t)&\,\defeq\, \sum_{\bj\in\mathcal E_N\setminus\{ \bi\}}2\left(\frac{w_\bj}{w_\bi}\right)\int_0^t K_0(\sqrt{2\beta_\bj} |Z^\bj_s|)\d L^{\bi}_s,\label{def:Aring-3}\\
\overline{A}^{\bbeta,\bs w}_0(t)&\,\defeq\, \sum_{\bj\in \mathcal E_N} \int_0^t  \beta_\bj\frac{w_\bj K^{\bbeta,\bj}_0(s)}{K^{\bbeta,\bs w}_0(s)}\d s.\label{def:finalhA}
\end{align}
Next, the following identities are taken from \cite[Propositions~3.11 and~3.15]{C:SDBG2-3}:
\begin{align}
 &\E^{\bbeta,\bs w}_{z_0}[F(\ms Z_{t\wedge T_0^{\mc E_N}};t\geq 0)\e^{A^{\bbeta,\bs w,\bi}_0(T^{\mc E_N}_0)};T^{\mc E_N}_0=T^\bi_0]=\frac{w_\bi K_0^{\bbeta,\bi}(0)}{K_0^{\bbeta,\bs w}(0)}\E^{\bi}_{z_0}[F(\ms Z_{t\wedge T_0^{\bi}};t\geq 0)], \label{Pinhomo:id-3}\\
&\sum_{\bi\in \mc E_N}\frac{w_\bi K_0^{\bbeta,\bi}(0)}{K_0^{\bbeta,\bs w}(0)}\E^{\bi}_{z_0}[\e^{-A^{\bbeta,\bs w,\bi}_0(T^{\mc E_N}_0)}]=1,\label{Pinhomo:idsum-3}\\
&\E^{\bbeta,\bs w,\bi}_{z_0}\left[F(\ms Z_{t\wedge t_0};t\geq 0);t_0<\Twi_0\right] =\E^{\bi}_{z_0}\left[F(\ms Z_{t\wedge t_0};t\geq 0)\mathcal E^{\bbeta,\bs w,\bi}_{z_0}(t_0)
\right]
,\label{def:Q21-3}
\end{align} 
where $z_0\in \CNw$ is imposed for \eqref{Pinhomo:id-3} and \eqref{Pinhomo:idsum-3}, $z_0\in \CNwni\,\defeq \CNw\cup\; \CNwi$ is imposed for \eqref{def:Q21-3}, $T_0^\bi$ is defined below \eqref{exp:LT-3},
$F\in \B_+(C_{\Bbb C^N}[0,\infty))$, $0<t_0<\infty$, and
\begin{align}
A^{\bbeta,\bs w,\bi}_0(t)&\,\defeq\, \mathring{A}^{\bbeta,\bs w,\bi}_0(t)+\overline{A}^{\bbeta,\bs w}_0(t)-\beta_\bi t,
\label{def:A-3}\\
\mathcal E_{z_0}^{\bbeta,\bs w,\bi}(t)&\,\defeq\,\frac{w_\bi K_0^{\bbeta,\bi}(0)}{K_0^{\bbeta,\bs w}(0)}
\frac{\e^{-A^{\bbeta,\bs w,\bi}_0(t)}K_0^{\bbeta,\bs w}(t)}{w_\bi K_0^{\bbeta,\bi}(t)}.\label{superMG-3}
\end{align}
Here in \eqref{def:Q21-3} and what follows,
we write $\P^{\bbeta,\bs w,\bi}_{z_0}$ for $\P^{\bbeta,\bs w}_{z_0}$ to stress the dependence on $\bi$ when $z_0\in \CNwni$. Also, $w_\bi K_0^{\bbeta,\bi}(s)/K_0^{\bbeta,\bs w}(s)$, $s\in \{0,t\}$, in $\mathcal E^{\bbeta,\bs w,\bi}_{z_0}(t)$ in \eqref{superMG-3} is defined to be $1$ when $Z_s^{\bi}=0$ (cf. \cite[Proposition~3.7 (1$\cc$)]{C:SDBG2-3}).
Note that \eqref{Pinhomo:id-3}, \eqref{Pinhomo:idsum-3} and \eqref{def:Q21-3} all show explicit relationships between the stochastic many-$\delta$ motion under $\P^{\bbeta,\bs w}$ and the stochastic one-$\delta$ motions under $\P^{\bi}$. These relationships are our tools to use the preliminary stochastic path integral representations in Proposition~\ref{prop:main1}.
\medskip

\begin{proof}[Proof of Theorem~\ref{thm:main2} (1$\cc$)]
By \eqref{Pinhomo:id-3}, we have
\begin{align}
&\quad\;\E_{z_0}^{\bbeta,\bs w}\left[\frac{\e^{A^{\bbeta,\bs w,\bi_1}_0(t)+\beta_{\bi_1}t}K_0^{\bbeta,\bs w}(0)}{K_0^{\bbeta,\bs w}(t)} f(\ms Z_t);
\left\{
\begin{array}{cc}
 t<T_0^{1}\\
\ms Z_{T_0^1}\in \CNwia
\end{array}
\right\}
\right]\notag\\
&=\frac{w_{\bi_1} K_0^{\bbeta,\bi_1}(0)}{K^{\bbeta,\bs w}_0(0)}\E_{z_0}^{\bi_1}\left[\e^{-A_0^{\bbeta,\bs w,\bi_1}(T_0^{\bi_1})}\frac{\e^{A^{\bbeta,\bs w,\bi_1}_0(t)+\beta_{\bi_1}t}K_0^{\bbeta,\bs w}(0)}{K_0^{\bbeta,\bs w}(t)} f(\ms Z_t);
t<T_0^{\bi_1}
\right]\notag\\
&=\frac{w_{\bi_1}K^{\bbeta,\bi_1}_0(0)}{K^{\bbeta,\bs w}_0(0)}\E_{z_0}^{\bi_1}\left[\frac{\e^{\beta_{\bi_1} t} K_0^{\bbeta,\bs w}(0)}{K_0^{\bbeta,\bs w}(t)} f(\ms Z_t)\E^{\bi_1}_{\ms Z_t}[\e^{-A^{\bbeta,\bs w,\bi_1}_0(T^{\bi_1}_0)}];t<T_0^{\bi_1}\right]\notag\\
&=\E_{z_0}^{\bi_1}\left[\frac{\e^{\beta_{\bi_1} t} w_{\bi_1}K_0^{\bbeta,\bi_1}(0)}{K_0^{\bbeta,\bs w}(t)} f(\ms Z_t)\E^{\bi_1}_{\ms Z_t}[\e^{-A^{\bbeta,\bs w,\bi_1}_0(T^{\bi_1}_0)}]
;t<T_0^{\bi_1}\right]\notag\\
&=\E_{z_0}^{(0)}\left[\frac{w_{\bi_1}K_0^{\bbeta,\bi_1}(t)}{K_0^{\bbeta,\bs w}(t)} f(\ms Z_t)\E^{\bi_1}_{\ms Z_t}[\e^{-A^{\bbeta,\bs w,\bi_1}_0(T^{\bi_1}_0)}]\right],\label{proof:FK:m=0}
\end{align}
where the second equality uses the Markov property, and the last equality uses \eqref{DY:com-3}. 
Note
\begin{align}\label{eq:finalhA}
\begin{split}
\mathring{A}^{\bbeta,\bs w}_0(t)&=\mathring{A}^{\bbeta,\bs w,\bi}_0(t)=0,\quad \forall\; t\leq T_0^1,\;\bi\in \mc E_N,
\end{split}
\end{align}
where the first line follows from \eqref{def:finalhA}. Hence, \eqref{proof:FK:m=0} proves \eqref{FK:m=0}.
In particular, by summing over the right-hand side for all $\bi_1\in \EN$ and using \eqref{Pinhomo:idsum-3}, we obtain \eqref{FK:m=0+}.
\end{proof}

\begin{proof}[Proof of Theorem~\ref{thm:main2} (2$\cc$) for $\bs m\bs =\bs 1$]
By Proposition~\ref{prop:main1}, it suffices to show 
\begin{align}
\begin{split}\label{int:m=1}
&\quad\;  2K^{\bbeta,\bi_1}_0(0)\int_\Omega \d \P^{\bi_1}_{z_0}(\omega_1)\int_0^t  \d L^{\bi_1}_{\tau_1}(\omega_1)\e^{\beta_{\bi_1} \tau_1}\E^{(0)}_{\ms Z_{\tau_1}(\omega_1)}[f(\ms Z_{t-\tau_1})]\\
&= \E_{z_0}^{\bbeta,\bs w}\left[\frac{\e^{{A}^{\bbeta,\bs w}_0(t)}K_0^{\bbeta,\bs w}(0)}{K_0^{\bbeta,\bs w}(t)} f( \ms Z_t);
\left\{
\begin{array}{cc}
 T_0^1\leq t<T_0^{2},\\
\ms Z_{T_0^1}\in \CNwia
\end{array}
\right\}
\right].
\end{split}
\end{align}

We need several steps to reach \eqref{int:m=1}. First,
\begin{align*}
&\quad \; 2K^{\bbeta,\bi_1}_0(0)\int_\Omega \d \P^{\bi_1}_{z_0}(\omega_1)\int_0^t  \d  L^{\bi_1}_{\tau_1}(\omega_1)\e^{\beta_{\bi_1} \tau_1}\E^{(0)}_{\ms Z_{\tau_1}(\omega_1)}[f(\ms Z_{t-\tau_1})]\\
&= 2K^{\bbeta,\bi_1}_0(0)\E^{\bi_1}_{z_0}\left[\int_0^t (\d L^{\bi_1}_{\tau_1})\e^{\beta_{\bi_1} \tau_1}\E^{(0)}_{\ms Z_{\tau_1}}[f(\ms Z_{t-\tau_1})]\right]\\
&=2K^{\bbeta,\bi_1}_0(0)\E^{\bi_1}_{z_0}\left[\int_{T_0^{\bi_1}}^t (\d L^{\bi_1}_{\tau_1})\e^{\beta_{\bi_1} \tau_1}\E^{(0)}_{\ms Z_{\tau_1}}[f(\ms Z_{t-\tau_1})];T_0^{\bi_1}<t\right]\\
&=\frac{2K^{\bbeta,\bs w}_0(0)}{w_{\bi_1}}\frac{w_{\bi_1}K^{\bbeta,\bi_1}_0(0)}{K^{\bbeta,\bs w}_0(0)}\E^{\bi_1}_{z_0}\left[\e^{\beta_{\bi_1} T_0^{\bi_1}}
\E^{\bi_1}_{\ms Z_{T_0^{\bi_1}}}\left.\left[\int_{0}^{t-r} (\d L^{\bi_1}_{\tau_1})\e^{\beta_{\bi_1} \tau_1}\E^{(0)}_{\ms Z_{\tau_1}}[f(\ms Z_{t-r-\tau_1})]\right]\right|_{r=T_0^{\bi_1}};T_0^{\bi_1}<t\right]\\
&=\frac{2K^{\bbeta,\bs w}_0(0)}{w_{\bi_1}}\frac{w_{\bi_1}K^{\bbeta,\bi_1}_0(0)}{K^{\bbeta,\bs w}_0(0)}\E^{\bi_1}_{z_0}\left[\e^{\beta_{\bi_1} T_0^{\bi_1}}
\E^{\bi_1}_{\ms Z_{T_0^{\bi_1}}}\left.\left[\int_{0}^{t-r} (\d  L^{\bi_1}_{\tau_1})\e^{\beta_{\bi_1} \tau_1}\E^{(0)}_{\ms Z_{\tau_1}}[f(\ms Z_{t-r-\tau_1})]\right]\right|_{r=T_0^{\bi_1}};T_0^{\bi_1}\leq t\right]\\
&=\frac{2K^{\bbeta,\bs w}_0(0)}{w_{\bi_1}}
\E^{\bbeta,\bs w}_{z_0}\Bigg[\e^{\mathring{A}^{\bbeta,\bs w}_0(T_0^1)+\overline{A}^{\bbeta,\bs w}_{0}(T_0^{1})}
\E^{\bi_1}_{\ms Z_{T_0^{1}}}\left.\left[\int_{0}^{t-r} (\d L^{\bi_1}_{\tau_1})\e^{\beta_{\bi_1} \tau_1}\E^{(0)}_{\ms Z_{\tau_1}}[f(\ms Z_{t-r-\tau_1})]\right]\right|_{r=T_0^{1}};\\
&\quad\;\left\{
\begin{array}{cc}
 T_0^1\leq t,\\
\ms Z_{T_0^1}\in \CNwia
\end{array}
\right\}\Bigg],
\end{align*}
where the third equality uses conditioning on $\{T_0^{\bi_1}<t\}$, the fourth equality uses the existence of probability density of $T_0^{\bi_1}$ under $\P^{\bi_1}$ \cite[p.884]{DY:Krein-3}, 
and the last equality follows from \eqref{Pinhomo:id-3} and \eqref{eq:finalhA}. 
Continuing from the last equality, we use \eqref{exp:LT-3} with $\bi=\bi_1$ and $z_0\in \CN$ such that $z_0^{\bi_1}=0$ to get
\begin{align}
&\quad\;2K^{\bbeta,\bi_1}_0(0)\int_\Omega \d \P^{\bi_1}_{z_0}(\omega_1)\int_0^t  \d  L^{\bi_1}_{\tau_1}(\omega_1)\e^{\beta_{\bi_1} \tau_1}\E^{(0)}_{\ms Z_{\tau_1}(\omega_1)}[f(\ms Z_{t-\tau_1})]\notag\\
&=\frac{2K^{\bbeta,\bs w}_0(0)}{w_{\bi_1}}\E^{\bbeta,\bs w}_{z_0}\Bigg[\e^{\mathring{A}^{\bbeta,\bs w}_0(T_0^1)+\overline{A}^{\bbeta,\bs w}_{0}(T_0^{1})}
\E^{\bi_1}_{\ms Z_{T_0^{1}}}\left.\left[\frac{\e^{\beta_{\bi_1} (t-r)}f(\ms Z_{t-r})}{2K_0^{\bbeta,\bi_1}(t-r)}\right]\right|_{r=T_0^{1}};\left\{
\begin{array}{cc}
 T_0^1\leq t,\\
\ms Z_{T_0^1}\in\CNwia
\end{array}
\right\}\Bigg]\notag\\
&=\frac{2K^{\bbeta,\bs w}_0(0)}{w_{\bi_1}}\E^{\bbeta,\bs w}_{z_0}\Bigg[\e^{\mathring{A}^{\bbeta,\bs w}_0(T_0^1)+\overline{A}^{\bbeta,\bs w}_{0}(T_0^{1})}\notag\\
&\quad \times\E^{\bbeta,\bs w,\bi_1}_{\ms Z_{T_0^{1}}}\left.\left[\frac{\e^{\beta_{\bi_1}(t-r)}f(\ms Z_{t-r})}{2K_0^{\bbeta,\bi_1}(t-r)}\frac{\e^{A^{\bbeta,\bs w,\bi_1}_0(t-r)}w_{\bi_1}K^{\bbeta,\bi_1}_0(t-r)}{K^{\bbeta,\bs w}_0(t-r)};t-r<T_0^{\bi_1,1}\right]\right|_{r=T_0^{1}};\notag\\
&\quad \left\{
\begin{array}{cc}
 T_0^1\leq t,\\
\ms Z_{T_0^1}\in \CNwia
\end{array}
\right\}\Bigg],\label{int:m=1-1}
\end{align}
where the last equality follows from \eqref{def:Q21-3}. In more detail, we have used in \eqref{int:m=1-1} the probability measure $\P^{\bbeta,\bs w,\bi_1}_{z_1}$, $z_1\in \CNwia$, for the associated stochastic many-$\delta$ motion \cite[Theorem~3.1]{C:SDBG2-3} and the associated stopping time $T_0^{\bi_1,1}$. 
Note that by \eqref{def:A-3} and \eqref{dec:finalA}, 
\[
\beta_{\bi_1}(t-r)+A_0^{\bbeta,\bs w,\bi_1}(t-r)=\mathring{A}^{\bbeta,\bs w}_0(t-r)+\overline{A}^{\bbeta,\bs w}_0(t-r)={A}^{\bbeta,\bs w}_0(t-r)\quad\mbox{$\P^{\bbeta,\bs w,\bi_1}_{z_1}$-a.s. on }\{t-r<T_0^{\bi_1,1}\}
\]
for any $z_1\in \CNwia$. 
Hence, by the strong Markov property of $\{\ms Z_t\}$ under $\P^{\bbeta,\bs w}_{z_0}$, we obtain \eqref{int:m=1} from \eqref{int:m=1-1}. This completes the proof of Theorem~\ref{thm:main2} (2$\cc$) for $m=1$.
\end{proof}

\begin{notation}
Set $T_0^{\bj}\,\defeq\,\inf\{t\geq s;Z_t^{\bj}=0\}$ under $\P^{\bj,s}_{z_1}$, where $\P_{z_1}^{\bi,s}$ is defined in \eqref{shiftprob}.
\qed 
\end{notation}

The proof of Theorem~\ref{thm:main2} (2$\cc$) for $m\geq 2$ is more complicated, so let us briefly describe the method of proof in the simplest case of $m=2$ first. By Proposition~\ref{prop:main1}, the corresponding analytic path integral can be written as
\begin{align}\label{m=2sum}
&\quad\;\int _{0< s_{1} < s_{2}< t}\mathrm{d} \boldsymbol s_{2} P^{\bbeta;{\mathbf i}_{1},{
\mathbf i}_{2}}_{s_{1},s_{2},t}f(z_{0})\notag\\
&= 2K^{\bbeta,\bi_1}_0(0)\int_\Omega \d \P^{\bi_1}_{z_0}(\omega_1)\int_{0}^{t}  \d L^{\bi_1}_{\tau_1}(\omega_1)\e^{\beta_{\bi_1} \tau_1}\notag\\
&\quad  \times  2K^{\bbeta,\bi_2}_0(\tau_1)(\omega_1)\int_\Omega \d \P^{\bi_2,\tau_1}_{\ms Z_{\tau_{1}}(\omega_{1})}(\omega_2)\int_{\tau_{1}}^{t} \d L^{\bi_2}_{\tau_2}(\omega_2)\e^{\beta_{\bi_2} (\tau_2-\tau_{1})} \E_{\ms Z_{\tau_2}(\omega_2)}^{(0)}[f(\ms Z_{t-\tau_2})].
\end{align}
As we have adopted the notation in \eqref{shiftprob}, the right-hand side can be viewed naturally as a path integral under which a path is run under $\P^{\bi_1}_{z_0}$ until time $\tau_1$, continues with $\P^{\bi_2}_{\ms Z_{\tau_{1}}(\omega_{1}),\tau_{1}}$ until time $\tau_2$, and is completed under $\P_{\ms Z_{\tau_2}(\omega_2)}^{(0)}$ to stop at time $t$.

In the following proof, we will convert the iterated integral on the right-hand side of \eqref{m=2sum} to a Feynman--Kac-type formula by subdividing the path we just described into the following three groups of paths and then 
converting them backward in time to paths of the same stochastic many-$\delta$ motion:
\begin{itemize}
\item The path over $[T_0^{\bi_2},\tau_2]$ under $\P^{\bi_2}_{\ms Z_{\tau_1}(\omega_1),\tau_1}$ and the path over $[\tau_2,t]$ under $\P_{\ms Z_{\tau_2}(\omega_2)}^{(0)}$. 
\item The path over $[T_0^{\bi_1},\tau_1]$ under $\P^{\bi_1}_{z_0}$ and the path over $[\tau_1,T_0^{\bi_2}]$ under $\P^{\bi_2,\tau_1}_{\ms Z_{\tau_1}(\omega_1)}$.
\item The path over $[0,T_0^{\bi_1}]$ under $\P^{\bi_1}_{z_0}$. 
\end{itemize}
In particular, expectations of the form on the left-hand side of \eqref{connector:20} below will emerge from the conversions. We postpone the proof of the following proposition to Section~\ref{sec:connector}.

\begin{prop}\label{prop:connector}
Fix $\bi,\bj\in \mc E_N$ with $\bi\neq \bj$, 
 and let $z_0\in \CNwi$.
Then for all bounded $F\in \B_+(\CN\times\R_+)$ and $t\geq 0$,
\begin{align}
&\quad\;\E^{\bi}_{z_0}\left[\int_0^{t}\d  L^{\bi}_{\tau}\e^{\beta_\bi \tau}K_0^{\bbeta,\bj}(\tau)\E^{\bj,\tau}_{\ms Z_{\tau}}[\e^{\beta_{\bj}(
T_0^{\bj}-\tau
)
}F(\ms Z_{T_0^{\bj}},T_0^{\bj});T_0^{\bj}\leq t]
\right]\notag\\
\begin{split}
&=\frac{w_\bi}{w_\bj}
\E^{\bbeta,\bs w,\bi}_{z_0}\left[\frac{\e^{{A}^{\bbeta,\bs w}_0(T_0^{\bi,1})}}{2}F(\ms Z_{T_0^{\bi,1}},T_0^{\bi,1});\left\{
\begin{array}{cc}
T_0^{\bi,1}\leq t,\\
\ms Z_{T_0^{\bi,1}}\in \CNwj
\end{array}
\right\}
\right].\label{connector:20}
\end{split}
\end{align}
\end{prop}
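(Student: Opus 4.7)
The plan is to reduce both sides of \eqref{connector:20} to comparable expressions under $\P^\bi_{z_0}$, leveraging the local absolute continuity \eqref{def:Q21-3} between $\P^{\bbeta,\bs w,\bi}$ and $\P^\bi$, the Donati--Martin--Yor identities \eqref{DY:com-3}--\eqref{DY:law2-3}, and the local-time/hitting-time duality \eqref{exp:LT-3}. Since the LHS is already an expectation under $\P^\bi$, the work is concentrated on transforming the RHS and then matching. The main obstacle will be that the supermartingale $\mathcal E^{\bbeta,\bs w,\bi}_{z_0}$ from \eqref{superMG-3} blows up at $T_0^{\bi,1}$ on the event $\{\ms Z_{T_0^{\bi,1}}\in \CNwj\}$ because $K^{\bbeta,\bj}_0(T_0^{\bi,1})=+\infty$; hence \eqref{def:Q21-3}, stated for a deterministic $t_0<\Twi_0$, cannot be applied at $t_0=T_0^{\bi,1}$ directly, and the ``exit measure of the supermartingale'' flagged in Section~\ref{sec:intro-main} must be extracted by a controlled limiting argument.

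\textbf{Key algebraic simplification.} The crucial preliminary observation is that under $\P^\bi_{z_0}$ the non-$\bi$ coordinates $Z^\bk$ ($\bk\neq \bi$) arise from independent two-dimensional Brownian motions via \eqref{def:SDE2-3} and almost surely never hit zero, so $L^\bk\equiv 0$ for every $\bk\neq \bi$. Consequently \eqref{def:finalrA}--\eqref{def:finalhA}, \eqref{dec:finalA}, and \eqref{def:A-3} together yield, $\P^\bi_{z_0}$-a.s.\ on $\{t<T_0^{\bi,1}\}$, the identity $A^{\bbeta,\bs w}_0(t)=A^{\bbeta,\bs w,\bi}_0(t)+\beta_\bi t$; hence by \eqref{superMG-3},
\[
\e^{A^{\bbeta,\bs w}_0(t)}\,\mathcal E^{\bbeta,\bs w,\bi}_{z_0}(t)=\e^{\beta_\bi t}\cdot\frac{K^{\bbeta,\bi}_0(0)}{K^{\bbeta,\bs w}_0(0)}\cdot\frac{K^{\bbeta,\bs w}_0(t)}{K^{\bbeta,\bi}_0(t)}.
\]
Combined with \eqref{def:Q21-3} at a deterministic $t_0<T_0^{\bi,1}$, this converts the time-$t_0$ analogue of the RHS of \eqref{connector:20} into an explicit $\P^\bi_{z_0}$-expectation in which the $\mathcal E\cdot\e^{A^{\bbeta,\bs w}_0}$ factor has been reduced to the right-hand side above.

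\textbf{Limiting step and matching.} To pass from $t_0<T_0^{\bi,1}$ to the stopping time $T_0^{\bi,1}$, I would introduce approximating stopping times $\sigma_n\defeq\inf\{s\geq 0:K^{\bbeta,\bs w}_0(s)\geq n\}$, which satisfy $\sigma_n<T_0^{\bi,1}$ and $\sigma_n\uparrow T_0^{\bi,1}$ on $\{\ms Z_{T_0^{\bi,1}}\in \CNwj,T_0^{\bi,1}\leq t\}$, apply \eqref{def:Q21-3} together with the simplification above at $\sigma_n$, and take $n\to\infty$. On the event of interest the asymptotic \eqref{K00-3} guarantees that $K^{\bbeta,\bs w}_0(\sigma_n)\sim w_\bj K^{\bbeta,\bj}_0(\sigma_n)$, producing the ratio $w_\bj/w_\bi$; the other $\bk$-contributions are negligible since $Z^\bk$ never reaches zero under $\P^\bi_{z_0}$. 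Using \eqref{DY:com-3} applied to the $\bj$-relative motion, the divergent residual $K^{\bbeta,\bj}_0(\sigma_n)$ is reabsorbed into the Radon--Nikodym density implementing the measure change $\P^{(0)}\to\P^\bj$ on the $\bj$-coordinate, recasting the limit as an expectation of the $\bj$-hitting-time functional that appears on the LHS. A final application of \eqref{exp:LT-3} to the pair $\bi$ expresses the resulting $\P^\bi_{z_0}$ expectation as the $\d L^\bi_\tau$-weighted integral constituting the LHS of \eqref{connector:20}, with the factor $1/2$ originating from the $1/(2K_0)$ in \eqref{exp:LT-3} and the prefactor $w_\bi/w_\bj$ from the above asymptotic.
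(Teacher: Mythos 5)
There is a genuine gap, and it sits exactly where you locate the difficulty. First, your approximating times fail already at their definition: with $z_0\in\CNwi$ we have $Z^\bi_0=0$, hence $K^{\bbeta,\bs w}_0(0)=+\infty$ and $\sigma_n\equiv 0$; moreover $Z^\bi$ keeps returning to $0$ under $\P^{\bbeta,\bs w,\bi}_{z_0}$, so no level set of $K^{\bbeta,\bs w}_0$ exhausts $[0,T_0^{\bi,1})$. Even if you replace $K^{\bbeta,\bs w}_0$ by $K^{\bbeta,\bj}_0$, the identity \eqref{def:Q21-3} is stated only for deterministic $t_0$, so invoking it ``at $\sigma_n$'' is itself an unproved step; the paper deliberately avoids random times and applies \eqref{def:Q21-3}, \eqref{DY:com-3}, \eqref{exp:LT-3} and \eqref{Pinhomo:id-3} only at the deterministic dyadic times $a_{n-1}$. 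Second, and more seriously, the sentence claiming that ``the divergent residual $K_0^{\bbeta,\bj}(\sigma_n)$ is reabsorbed into the Radon--Nikodym density \ldots recasting the limit as an expectation of the $\bj$-hitting-time functional'' asserts the proposition rather than proving it. Under $\P^{\bi}_{z_0}$ the pair $\bj$ never vanishes, so $\P^{\bi}_{z_0}(T_0^{\bi,1}\leq t)=0$, while this event has positive $\P^{\bbeta,\bs w,\bi}_{z_0}$-probability: the two laws are mutually singular at the exit time, $\mathcal E^{\bbeta,\bs w,\bi}_{z_0}$ blows up there, and identifying the resulting ``$\infty\times 0$'' limit with the local-time-by-hitting-time integral on the left of \eqref{connector:20} is the entire analytic content (the exit-measure issue flagged in Section~\ref{sec:intro-main}). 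You offer no uniform integrability, no exchange of limits, and no mechanism recovering the two-level time structure of the left-hand side (a $\d L^{\bi}_\tau$ point followed by a strictly later hitting time $T_0^{\bj}$), which is exactly what forces the paper's semi-discretization: Lemma~\ref{lem:connectorapprox}, resting on the integrability bound \eqref{EL:continuity1} and the continuity of the hitting-time distribution, inserts a deterministic time $a_{n-1}$ between $\tau$ and $T_0^{\bj}$, after which \eqref{DY:com-3}, \eqref{exp:LT-3} (with $z_0^{\bi}=0$), \eqref{Pinhomo:id-3} and \eqref{def:Q21-3} are applied at that fixed time and the Markov property reassembles the pieces.

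In addition, the paper's final monotone-convergence step identifies the union of the discretized events with $\{0<T_0^{\bi,1}=T_0^{\bj}\leq t\}$ only through the no-triple-contacts property \cite[Proposition~3.15 (7$\cc$)]{C:SDBG2-3}, which guarantees $Z^{\bi}\neq 0$ on a left neighborhood of $T_0^{\bi,1}$ and hence that the event $\{\ms Z_{T_0^{\bi,1}}\in \CNwj\}$ on the right-hand side of \eqref{connector:20} is correctly matched; your sketch never invokes NTC, so even granting your limit the event identification is missing. Your algebraic preliminary, namely $A^{\bbeta,\bs w}_0(t)=A^{\bbeta,\bs w,\bi}_0(t)+\beta_\bi t$ before $T_0^{\bi,1}$ because the other local times vanish, is correct and is indeed used in the paper, but the core of the argument remains to be supplied.
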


\begin{proof}[Proof  of Theorem~\ref{thm:main2} (2$\cc$) for all $\bs m\bs \geq \bs 2$]
As in the case of $m=1$, we work with \eqref{rep:Plimit}.\medskip 

\noindent {\bf Step 1.} 
Our goal in this step is to prove the following identity: 
\begin{align}
&\quad\;\int_{0<s_1<\cdots<s_m<t}\d \bs s_m P^{\bbeta;\bi_1,\cdots,\bi_m}_{s_1,\cdots,s_m,t}f(z_0)\notag\\
&=2^mK_0^{\bbeta,\bi_1}(0)\int_\Omega \d \P^{\bi_1}_{z_0}(\omega_1)\int_0^t  \d  L^{\bi_1}_{\tau_1}(\omega_1)\e^{\beta_{\bi_1} \tau_1}\notag\\
&\quad\times K_0^{\bbeta,\bi_2}(\tau_{1})(\omega_1)\int_\Omega \d \P^{\bi_2,\tau_1}_{ \ms Z_{\tau_1}(\omega_1)}(\omega_2)\int_{\tau_1}^{t}  \d L^{\bi_2}_{\tau_2}(\omega_2)\e^{\beta_{\bi_2} (\tau_2-\tau_1)} \cdots\notag\\
&\quad \times  K_0^{\bbeta,\bi_{m-1}}(\tau_{m-2})(\omega_{m-2})\int_\Omega \d \P^{\bi_{m-1},\tau_{m-2}}_{ \ms Z_{\tau_{m-2}}(\omega_{m-2})}(\omega_{m-1})  \int_{\tau_{m-2}}^{t} \d L^{\bi_{m-1}}_{\tau_{m-1}}(\omega_{m-1})\e^{\beta_{\bi_{m-1}}( \tau_{m-1}-\tau_{m-2})}\notag\\
&\quad\times  K_0^{\bbeta,\bi_m}(\tau_{m-1})(\omega_{m-1}) \int_\Omega \d \P^{\bi_m,\tau_{m-1}}_{ \ms Z_{\tau_{m-1}}(\omega_{m-1})}(\omega_m')\e^{\beta_{\bi_m} (T_0^{\bi_m}(\omega_m')-\tau_{m-1})}\1_{\{T_0^{\bi_m}(\omega_m')\leq t\}}\notag\\
&\quad \times  F_m\left(\ms Z_{T_0^{\bi_m}}(\omega_m'),T_0^{\bi_m}(\omega_m') \right),\label{FK:5}
\end{align}
where
\begin{align*}
F_m\left(z_m,r_m\right)
\defeq\,w_{\bi_m}\E^{\bbeta,\bs w,\bi_m}_{z_m}\left[\frac{\e^{{A}^{\bbeta,\bs w}_0(t-r_m)} }{2K_0^{\bbeta,\bs w}(t-r_m)} f(\ms Z_{t-r_m}); t-r_m<T_0^{\bs w\setminus\{w_{\bi_m}\}}\right].
\end{align*}

The proof of \eqref{FK:5} is similar to the above proof of (\ref{sum:DBG}) in the case of $m = 1$. Specifically, we focus on the iterated integral over $[\tau_{m-1},t]$ from the right-hand side of \eqref{rep:Plimit}, namely,
\begin{align}
& \quad\; \int_\Omega \d \P^{\bi_m,\tau_{m-1}}_{ \ms Z_{\tau_{m-1}}(\omega_{m-1})}(\omega_m) \int_{\tau_{m-1}}^{t}  \d L^{\bi_m}_{\tau_m}(\omega_m)\e^{\beta_{\bi_m} (\tau_m-\tau_{m-1})}
 \E_{\ms Z_{\tau_m}(\omega_m)}^{(0)}[f(\ms Z_{t-\tau_m})]\notag\\
 &=  \int_\Omega \d \P^{\bi_m,\tau_{m-1}}_{ \ms Z_{\tau_{m-1}}(\omega_{m-1})}(\omega_m')\e^{\beta_{\bi_m} (T_0^{\bi_m}(\omega_m')-\tau_{m-1})}\1_{\{T_0^{\bi_m}(\omega_m')\leq t\}}\notag\\
&\quad \int_{\Omega}\d\P^{\bi_m,T_0^{\bi_m}(\omega_m')}_{\ms Z_{T_0^{\bi_m}(\omega_m')}}(\omega_m)
\int_{T_0^{\bi_m}(\omega_m')}^{t}  \d L^{\bi_m}_{\tau_m}(\omega_m)\e^{\beta_{\bi_m} (\tau_m-T_0^{\bi_m}(\omega_m'))} \E_{\ms Z_{\tau_m}(\omega_m)}^{(0)}[f(\ms Z_{t-\tau_m})],\label{FK:30}
\end{align} 
where the last equality follows by conditioning on $\{T^{\bi_m}_0(\omega'_m)\leq t\}$ under $\d \P^{\bi_m,\tau_{m-1}}_{ \ms Z_{\tau_{m-1}}(\omega_{m-1})}(\omega_m')$. To simplify the right-hand side, note that 
\begin{align}
&\quad\;\int_{\Omega}\d\P^{\bi_m,T_0^{\bi_m}(\omega_m')}_{\ms Z_{T_0^{\bi_m}(\omega_m')}}(\omega_m)
\int_{T_0^{\bi_m}(\omega_m')}^{t}  \d L^{\bi_m}_{\tau_m}(\omega_m)\e^{\beta_{\bi_m} (\tau_m-T_0^{\bi_m}(\omega_m'))} \E_{\ms Z_{\tau_m}(\omega_m)}^{(0)}[f(\ms Z_{t-\tau_m})]\notag\\
&=\left.\E^{\bi_m}_{\ms Z_{T_0^{\bi_m}(\omega_m')}}\left[
\int_{0}^{t-r_m} (\d L^{\bi_m}_{\tau_m})\e^{\beta_{\bi_m}\tau_m}\E_{\ms Z_{\tau_m}}^{(0)}[f(\ms Z_{t-r_m-\tau_m})]\right]\right|_{r_m=T_0^{\bi_m}(\omega_m')}\notag\\
&=\left.\E^{\bi_m}_{\ms Z_{T_0^{\bi_m}(\omega_m')}}\left[\frac{\e^{\beta_{\bi_m} (t-r_m)}f(\ms Z_{t-r_m})}{2K_0^{\bbeta,\bi_m}(t-r_m)}\right]\right|_{r_m=T_0^{\bi_m}(\omega_m')}\notag
\end{align}
by \eqref{exp:LT-3} with $\bi=\bi_m$ and $t$ replaced by $t-r_m$. The last expectation allows the use of \eqref{def:Q21-3} with $z_0\in \CNwi$, in which case 
 $w_\bi K^{\bbeta,\bi}_0(0)/K^{\bbeta,\bs w}_0(0)=1$ (cf. \cite[Proposition~3.7 (1$\cc$)]{C:SDBG2-3}). Hence,
\begin{align}
&\quad\;\int_{\Omega}\d\P^{\bi_m,T_0^{\bi_m}(\omega_m')}_{\ms Z_{T_0^{\bi_m}(\omega_m')}}(\omega_m)
\int_{T_0^{\bi_m}(\omega_m')}^{t}  \d L^{\bi_m}_{\tau_m}(\omega_m)\e^{\beta_{\bi_m} (\tau_m-T_0^{\bi_m}(\omega_m'))} \E_{\ms Z_{\tau_m}(\omega_m)}^{(0)}[f(\ms Z_{t-\tau_m})]\notag\\
&=\left.\E^{\bbeta,\bs w,\bi_m}_{\ms Z_{T_0^{\bi_m}(\omega_m')}}\left[\frac{\e^{\beta_{\bi_m} (t-r_m)}f(\ms Z_{t-r_m})}{2K_0^{\bbeta,\bi_m}(t-r_m)}\mathcal E^{\bbeta,\bs w,\bi_m}_{\ms Z_{T_0^{\bi_m}(\omega_m')}}(t-r_m)^{-1} ; t-r_m<T_0^{\bs w\setminus\{w_{\bi_m}\}}\right]\right|_{r_m=T_0^{\bi_m}(\omega_m')}\notag\\
&=\left.w_{\bi_m}\E^{\bbeta,\bs w,\bi_m}_{\ms Z_{T_0^{\bi_m}(\omega_m')}}\left[\frac{\e^{{A}^{\bbeta,\bs w}_0(t-r_m)} }{2K_0^{\bbeta,\bs w}(t-r_m)} f(\ms Z_{t-r_m}); t-r_m<T_0^{\bs w\setminus\{w_{\bi_m}\}}\right]\right|_{r_m=T_0^{\bi_m}(\omega_m')}.
\label{FK:3}
\end{align}
The algerba done for the last equality is as follows: since $L^{\bj}_{t-r_m}=0$ for all $\bj\neq \bi_m$ on $\{t-r_m< T_0^{\bi_m\complement}\}$, we have
\begin{align*}
A^{\bbeta,\bs w,\bi_m}_0(t-r_m)+\beta_{\bi_m}(t-r_m)&=\mathring{A}^{\bbeta,\bs w}_0(t-r_m)+\overline{A}^{\bbeta,\bs w,\bi_m}_0(t-r_m)
={A}^{\bbeta,\bs w}_0(t-r_m).
\end{align*}
Applying \eqref{FK:30} and \eqref{FK:3} to \eqref{rep:Plimit} proves \eqref{FK:5}. \medskip 

\noindent {\bf Step~2.} Recall the stopping times defined in \eqref{def:Tm2-3}, and define
\begin{align}\label{def:Fell}
F_{\ell}(z_{\ell},r_{\ell})
\,
\defeq\,w_{\bi_\ell}\E^{\bbeta,\bs w,\bi_\ell}_{z_{\ell}}\left[\frac{\e^{{A}^{\bbeta,\bs w}_0(t-r_{\ell})}f(\ms Z_{t-r_{\ell}}) }{2^{m-\ell+1}K_0^{\bbeta,\bs w}(t-r_{\ell})}  ;
\left\{
\begin{array}{cc}
T^{\bi_\ell,m-\ell}_0\leq t-r_\ell<T^{\bi_\ell,m-\ell+1}_0,\\
\ms Z_{T^{\bi_\ell,1}_0}\in  \CNwid,\cdots,\\
\ms Z_{T^{\bi_\ell,m-\ell}_0}\in \CNwib
\end{array}
\right\}
 \right]
 \end{align}
for all $z_\ell\in \CNwie$, $0\leq r_\ell\leq t$ and $1\leq \ell\leq m-1$. Our goal here is to prove that with 
\[
\tau_0=0, \quad K_0^{\bbeta,\bi_{1}}(\tau_{0})(\omega_{0})\equiv K_0^{\bbeta,\bi_1}(0),\quad \P^{\bi_{1},\tau_{0}}_{ \ms Z_{\tau_{0}}(\omega_{0})}\equiv  \P^{\bi_{1}}_{ z_0},
\]
the following identity holds for any $2\leq \ell\leq m$: 
\begin{align}
&\quad\; K_0^{\bbeta,\bi_{\ell-1}}(\tau_{\ell-2})(\omega_{\ell-2})\int_\Omega \d \P^{\bi_{\ell-1},\tau_{\ell-2}}_{ \ms Z_{\tau_{\ell-2}}(\omega_{\ell-2})}(\omega_{\ell-1})\int_{\tau_{\ell-2}}^{t}  \d L^{\bi_{\ell-1}}_{\tau_{\ell-1}}(\omega_{\ell-1})\e^{\beta_{\bi_{\ell-1}} (\tau_{\ell-1}-\tau_{\ell-2})}\notag\\
&\quad  \times  K_0^{\bbeta,\bi_\ell}(\tau_{\ell-1})(\omega_{\ell-1})\int_\Omega \d \P^{\bi_\ell,\tau_{\ell-1}}_{ \ms Z_{\tau_{\ell-1}}(\omega_{\ell-1})}(\omega_\ell')\e^{\beta_{\bi_\ell} (T_0^{\bi_\ell}(\omega_\ell')-\tau_{\ell-1})}\1_{\{T_0^{\bi_\ell}(\omega_\ell')\leq t\}}\notag\\
&\quad\times F_\ell\left(\ms Z_{T_0^{\bi_\ell}}(\omega_\ell'),T_0^{\bi_\ell}(\omega_\ell') \right)\notag\\
&= K_0^{\bbeta,\bi_{\ell-1}}(\tau_{\ell-2})(\omega_{\ell-2})\int_\Omega \d \P^{\bi_{\ell-1},\tau_{\ell-2}}_{ \ms Z_{\tau_{\ell-2}}(\omega_{\ell-2})}(\omega_{\ell-1}')\e^{\beta_{\bi_{\ell-1}} (T_0^{\bi_{\ell-1}}(\omega_{\ell-1}')-\tau_{\ell-2})}\1_{\{T^{\bi_{\ell-1}}_0(\omega_{\ell-1}')\leq t\}}\notag\\
&\quad\times F_{\ell-1}\left(\ms Z_{T_0^{\bi_{\ell-1}}}(\omega_{\ell-1}'),T_0^{\bi_{\ell-1}}(\omega_{\ell-1}') \right).\label{FK:auxconnect}
\end{align}
By  iterations, \eqref{FK:5} and \eqref{FK:auxconnect} imply the following identities for all $m\geq 3$ and $3\leq \ell\leq m$:
\begin{align}
&\quad\;\int_{0<s_1<\cdots<s_m<t}\d \bs s_m P^{\bbeta;\bi_1,\cdots,\bi_m}_{s_1,\cdots,s_m,t}f(z_0)\notag\\
&=2^mK_0^{\bbeta,\bi_1}(0)\int_\Omega \d \P^{\bi_1}_{z_0}(\omega_1)\int_0^t  \d  L^{\bi_1}_{\tau_1}(\omega_1)\e^{\beta_{\bi_1} \tau_1}\notag\\
&\quad\times K_0^{\bbeta,\bi_2}(\tau_{1})(\omega_1)\int_\Omega \d \P^{\bi_2,\tau_1}_{ \ms Z_{\tau_1}(\omega_1)}(\omega_2)\int_{\tau_1}^{t}  \d L^{\bi_2}_{\tau_2}(\omega_2)\e^{\beta_{\bi_2} (\tau_2-\tau_1)} \cdots\notag\\
&\quad \times  K_0^{\bbeta,\bi_{\ell-1}}(\tau_{\ell-2})(\omega_{\ell-2})\int_\Omega \d \P^{\bi_{\ell-1},\tau_{\ell-2}}_{ \ms Z_{\tau_{\ell-2}}(\omega_{\ell-2})}(\omega_{\ell-1})  \int_{\tau_{\ell-2}}^{t} \d L^{\bi_{\ell-1}}_{\tau_{\ell-1}}(\omega_{\ell-1})\e^{\beta_{\bi_{\ell-1}}( \tau_{\ell-1}-\tau_{\ell-2})}\notag\\
&\quad\times  K_0^{\bbeta,\bi_\ell}(\tau_{\ell-1})(\omega_{\ell-1}) \int_\Omega \d \P^{\bi_\ell,\tau_{\ell-1}}_{ \ms Z_{\tau_{\ell-1}}(\omega_{\ell-1})}(\omega_\ell')\e^{\beta_{\bi_\ell} (T_0^{\bi_\ell}(\omega_\ell')-\tau_{\ell-1})}\1_{\{T_0^{\bi_\ell}(\omega_\ell')\leq t\}}\notag\\
&\quad \times F_{\ell}\left(\ms Z_{T_0^{\bi_{\ell}}}(\omega_{\ell}'),T^{\bi_{\ell}}_0(\omega'_{\ell}) \right),\notag
\end{align}
and more generally, for all $m\geq 2$,
\begin{align}
&\quad\;\int_{0<s_1<\cdots<s_m<t}\d \bs s_m P^{\bbeta;\bi_1,\cdots,\bi_m}_{s_1,\cdots,s_m,t}f(z_0)\notag\\
&=2^mK_0^{\bbeta,\bi_1}(0)\int_\Omega \d \P^{\bi_1}_{z_0}(\omega_1)\int_0^t  \d  L^{\bi_1}_{\tau_1}(\omega_1)\e^{\beta_{\bi_1} \tau_1}\notag\\
&\quad\times  K_0^{\bbeta,\bi_2}(\tau_{1})(\omega_{1}) \int_\Omega \d \P^{\bi_2,\tau_{1}}_{ \ms Z_{\tau_{1}}(\omega_{1})}(\omega_2')\e^{\beta_{\bi_2} (T_0^{\bi_2}(\omega_2')-\tau_{1})}\1_{\{T_0^{\bi_2}(\omega_2')\leq t\}}F_{2}\left(\ms Z_{T_0^{\bi_{2}}}(\omega_{2}'),T^{\bi_{2}}_0(\omega'_{2}) \right)\notag\\
&=2^mK_0^{\bbeta,\bi_1}(0)\int_\Omega \d \P^{\bi_1}_{z_0}(\omega_1')\1_{\{T_0^{\bi_1}(\omega_1')\leq t\}} \e^{\beta_{\bi_1}T_0^{\bi_1}(\omega_1')}F_{1}\left(\ms Z_{T_0^{\bi_{1}}}(\omega_{1}'),T^{\bi_{1}}_0(\omega'_{1}) \right)\label{FK:8}.
\end{align}
We will close the proof of \eqref{sum:DBG} for all $m\geq 2$ in the next step by using \eqref{FK:8}.

To prove \eqref{FK:auxconnect} for $2\leq \ell\leq m$, consider its left-hand side and condition on $\{T_0^{\bi_\ell}(\omega_\ell')\leq t\}$: 
\begin{align}
&\quad\; K_0^{\bbeta,\bi_{\ell-1}}(\tau_{\ell-2})(\omega_{\ell-2})\int_\Omega \d \P^{\bi_{\ell-1},\tau_{\ell-2}}_{ \ms Z_{\tau_{\ell-2}}(\omega_{\ell-2})}(\omega_{\ell-1})\int_{\tau_{\ell-2}}^{t}  \d L^{\bi_{\ell-1}}_{\tau_{\ell-1}}(\omega_{\ell-1})\e^{\beta_{\bi_{\ell-1}} (\tau_{\ell-1}-\tau_{\ell-2})}\notag\\
&\quad  \times  K_0^{\bbeta,\bi_\ell}(\tau_{\ell-1})(\omega_{\ell-1})\int_\Omega \d \P^{\bi_\ell,\tau_{\ell-1}}_{ \ms Z_{\tau_{\ell-1}}(\omega_{\ell-1})}(\omega_\ell')\e^{\beta_{\bi_\ell} (T_0^{\bi_\ell}(\omega_\ell')-\tau_{\ell-1})}\1_{\{T_0^{\bi_\ell}(\omega_\ell')\leq t\}}\notag\\
&\quad\times F_\ell\left(\ms Z_{T_0^{\bi_\ell}}(\omega_\ell'),T_0^{\bi_\ell}(\omega_\ell') \right)\notag\\
 &= K_0^{\bbeta,\bi_{\ell-1}}(\tau_{\ell-2})(\omega_{\ell-2})\int_\Omega \d \P^{\bi_{\ell-1},\tau_{\ell-2}}_{ \ms Z_{\tau_{\ell-2}}(\omega_{\ell-2})}(\omega_{\ell-1}')\e^{\beta_{\bi_{\ell-1}} (T_0^{\bi_{\ell-1}}(\omega_{\ell-1}')-\tau_{\ell-2})}\1_{\{T^{\bi_{\ell-1}}_0(\omega_{\ell-1}')\leq t\}}\notag\\
&\quad \times \int_{\Omega}\d\P^{\bi_{\ell-1},T_0^{\bi_{\ell-1}}(\omega_{\ell-1}')}_{\ms Z_{T_0^{\bi_{\ell-1}}(\omega_{\ell-1}')}}(\omega_{\ell-1}) \int_{T^{\bi_{\ell-1}}_0(\omega_{\ell-1}')}^{t}  \d L^{\bi_{\ell-1}}_{\tau_{\ell-1}}(\omega_{\ell-1})\e^{\beta_{\bi_{\ell-1}} (\tau_{\ell-1}-T_0^{\bi_{\ell-1}}(\omega_{\ell-1}'))}\notag\\
&\quad \times  K_0^{\bbeta,\bi_\ell}(\tau_{\ell-1})(\omega_{\ell-1})\int_\Omega \d \P^{\bi_\ell,\tau_{\ell-1}}_{ \ms Z_{\tau_{\ell-1}}(\omega_{\ell-1})}(\omega_\ell')\e^{\beta_{\bi_\ell} (T_0^{\bi_\ell}(\omega_\ell')-\tau_{\ell-1})}\1_{\{T_0^{\bi_\ell}(\omega_\ell')\leq t\}}\notag\\
&\quad \times F_\ell\left(\ms Z_{T_0^{\bi_\ell}}(\omega_\ell'),T_0^{\bi_\ell}(\omega_\ell') \right).\label{FK:9}
\end{align}
To simplify the right-hand side, we fix the value of $T_0^{\bi_{\ell-1}}(\omega_{\ell-1}')$ by 
\begin{align}\label{Tr:fixedvalue}
r_{\ell-1}=T_0^{\bi_{\ell-1}}(\omega_{\ell-1}')
\end{align}
and rewrite the iterated integral over $[T_0^{\bi_{\ell-1}}(\omega_{\ell-1}'),t]$ in \eqref{FK:9} with
 the shorthand notation
\[
\Sigma r_\ell\,\defeq\,r_\ell+r_{\ell-1}
\]
when given $T_0^{\bs w\setminus\{w_{\bi_{\ell-1}}\}}(\omega_{\ell}')=r_\ell$. Therefore, 
\begin{align}
&\quad\; \int_\Omega \d\P^{\bi_{\ell-1},T_0^{\bi_{\ell-1}}(\omega_{\ell-1}')}_{\ms Z_{T_0^{\bi_{\ell-1}}(\omega_{\ell-1}')}}(\omega_{\ell-1})   \int_{T^{\bi_{\ell-1}}_0(\omega'_{\ell-1})}^{t}  \d L^{\bi_{\ell-1}}_{\tau_{\ell-1}}(\omega_{\ell-1})\e^{\beta_{\bi_{\ell-1}} (\tau_{\ell-1}-T_0^{\bi_{\ell-1}}(\omega_{\ell-1}'))}\notag\\
&\quad\;  \times  K_0^{\bbeta,\bi_\ell}(\tau_{\ell-1})(\omega_{\ell-1})\int_\Omega \d \P^{\bi_\ell,\tau_{\ell-1}}_{ \ms Z_{\tau_{\ell-1}}(\omega_{\ell-1})}(\omega_\ell')\e^{\beta_{\bi_\ell} (T_0^{\bi_\ell}(\omega_\ell')-\tau_{\ell-1})}\1_{\{T_0^{\bi_\ell}(\omega_\ell')\leq t\}}\notag\\
&\quad\;  \times F_\ell\left(\ms Z_{T_0^{\bi_\ell}}(\omega_\ell'),T_0^{\bi_\ell}(\omega_\ell') \right)\notag\\
&=\E^{\bi_{\ell-1}}_{\ms Z_{T_0^{\bi_{\ell-1}}(\omega_{\ell-1}')}}\Bigg[\int_0^{t-r_{\ell-1}}(\d  L^{\bi_{\ell-1}}_{\tau_{\ell-1}})\e^{\beta_\bi \tau_{\ell-1}}K_0^{\bbeta,\bi_\ell}(\tau_{\ell-1})\notag\\
&\quad \E^{\bi_\ell,\tau_{\ell-1}}_{\ms Z_{\tau_{\ell-1}}}\left[\e^{\beta_{\bi_\ell}(
T_0^{\bi_\ell}-\tau_{\ell-1}
)
}F_\ell\left(\ms Z_{T_0^{\bi_\ell}},T_0^{\bi_\ell}+r_{\ell-1}\right);T_0^{\bi_\ell}\leq t-r_{\ell-1}\right]
\Bigg]\notag\\
&=\frac{w_{\bi_{\ell-1}}}{w_{\bi_\ell}}\E^{\bbeta,\bs w,\bi_{\ell-1}}_{\ms Z_{T_0^{\bi_{\ell-1}}(\omega_{\ell-1}')}}\left[\frac{\e^{{A}^{\bbeta,\bs w}_0(T_0^{\bs w\setminus\{w_{\bi_{\ell-1}}\}})}}{2}w_{\bi_\ell}\right.\notag\\
&\quad \left. \E^{\bbeta,\bs w,\bi_\ell}_{\ms Z_{T_0^{\bs w\setminus\{w_{\bi_{\ell-1}}\}}} }\left[\frac{
\e^{{A}^{\bbeta,\bs w}_0(t-\Sigma r_\ell)}f(\ms Z_{t-\Sigma r_\ell})
}{2^{m-\ell+1}K_0^{\bbeta,\bs w}(t-\Sigma r_\ell)} ;  \left\{
\begin{array}{cc}
T^{\bi_\ell,m-\ell}_0\leq t-\Sigma r_\ell<T^{\bi_\ell,m-\ell+1}_0,\\
\ms Z_{T^{\bi_\ell,1}_0}\in  \CNwid,\cdots,\\
\ms Z_{T^{\bi_\ell,m-\ell}_0}\in \CNwib
\end{array}
\right\}\right]\right|_{r_\ell=T_0^{\bs w\setminus\{w_{\bi_{\ell-1}}\}}};\notag\\
&\quad\left.\left\{
\begin{array}{cc}
T_0^{\bs w\setminus\{w_{\bi_{\ell-1}}\}}\leq t- r_{\ell-1},\\
\ms Z_{T_0^{\bs w\setminus\{w_{\bi_{\ell-1}}\}}}\in \CNwie
\end{array}
\right\}
\right]\notag
\end{align}
by Proposition~\ref{prop:connector} and the definition \eqref{def:Fell} of $F_\ell$. Also, by the strong Markov property, we obtain from the last equality that
\begin{align}
&\quad\; \int_\Omega \d\P^{\bi_{\ell-1},T_0^{\bi_{\ell-1}}(\omega_{\ell-1}')}_{\ms Z_{T_0^{\bi_{\ell-1}}(\omega_{\ell-1}')}}(\omega_{\ell-1})   \int_{T^{\bi_{\ell-1}}_0(\omega'_{\ell-1})}^{t}  \d L^{\bi_{\ell-1}}_{\tau_{\ell-1}}(\omega_{\ell-1})\e^{\beta_{\bi_{\ell-1}} (\tau_{\ell-1}-T_0^{\bi_{\ell-1}}(\omega_{\ell-1}'))}\notag\\
&\quad\;  \times  K_0^{\bbeta,\bi_\ell}(\tau_{\ell-1})(\omega_{\ell-1})\int_\Omega \d \P^{\bi_\ell,\tau_{\ell-1}}_{ \ms Z_{\tau_{\ell-1}}(\omega_{\ell-1})}(\omega_\ell')\e^{\beta_{\bi_\ell} (T_0^{\bi_\ell}(\omega_\ell')-\tau_{\ell-1})}\1_{\{T_0^{\bi_\ell}(\omega_\ell')\leq t\}}\notag\\
&\quad\;  \times F_\ell\left(\ms Z_{T_0^{\bi_\ell}}(\omega_\ell'),T_0^{\bi_\ell}(\omega_\ell') \right)\notag\\
&=w_{\bi_{\ell-1}}\E^{\bbeta,\bs w,\bi_{\ell-1}}_{\ms Z_{T_0^{\bi_{\ell-1}}(\omega_{\ell-1}')}}\left[\frac{
\e^{{A}^{\bbeta,\bs w}_0(t-r_{\ell-1})}f(\ms Z_{t-r_{\ell-1}})
}{2^{m-\ell+2}K_0^{\bbeta,\bs w}(t-r_{\ell-1})}  ;\left\{
\begin{array}{cc}
T^{\bi_{\ell-1},m-\ell+1}_0\leq t-r_{\ell-1}<T^{\bi_{\ell-1},m-\ell+2}_0,\\
\ms Z_{T^{\bi_{\ell-1},1}_0}\in  \CNwie,\cdots,\\
\ms Z_{T^{\bi_{\ell-1},m-\ell+1}_0}\in \CNwib
\end{array}
\right\}
 \right]\notag\\
&= F_{\ell-1}\left(\ms Z_{T_0^{\bi_{\ell-1}}}(\omega_{\ell-1}'),T_0^{\bi_{\ell-1}}(\omega_{\ell-1}') \right),\notag
\end{align}
where the last equality holds by using the value of $r_{\ell-1}$ from \eqref{Tr:fixedvalue} and the definition \eqref{def:Fell} for $F_{\ell-1}$. Applying the last equality to \eqref{FK:9} proves \eqref{FK:auxconnect}.\medskip 

\noindent {\bf Step 3.}
Recall the stopping times defined in \eqref{def:Tm1-3}. 
Then, by \eqref{Pinhomo:id-3} and the strong Markov property at $T_0^{1}$ under $\P_{z_0}^{\bbeta,\bs w}$, \eqref{FK:8} gives
\begin{align*}
\int_{0<s_1<\cdots\atop <s_m<t}\d \bs s_m P^{\bbeta;\bi_1,\cdots,\bi_m}_{s_1,\cdots,s_m,t}f(z_0)
 =\E^{\bbeta,\bs w}_{z_0}\left[\frac{ \e^{{A}^{\bbeta,\bs w}_0(t)}K^{\bbeta,\bs w}_0(0) }{K_0^{\bbeta,\bs w}(t)} f(\ms Z_{t}) ;
\left\{
\begin{array}{cc}
T^m_0\leq t<T^{m+1}_0,\\
\ms Z_{T^1_0}\in \CNwia,\cdots,\\
\ms Z_{T^m_0}\in  \CNwib
\end{array}
\right\}\right],
\end{align*}
which is \eqref{sum:DBG} for $m\geq 2$. The proof of Theorem~\ref{thm:main2} (2$\cc$) is complete.
\end{proof}

\begin{proof}[Proof of Theorem~\ref{thm:main2} (3$\cc$)]
By \eqref{Pinhomo:idsum-3},
the required formulas in \eqref{goal:FK} follow upon using  \eqref{FK:m=0+} and \eqref{sum:DBG}.
\end{proof}

\subsection{Transformations of the intermediate parts}\label{sec:connector}
In this subsection, we prove Proposition~\ref{prop:connector} by using semi-discrete approximations. To set up these approximations, we define
\begin{align}\label{def:an}
a_n=a_n(N_0)\,\defeq\,\frac{nt}{2^{N_0}},\quad n\in \{0,1,\cdots,2^{N_0}\},\quad N_0\in \Bbb N.
\end{align}
Note that $\{a_n(N_0);n\in \{0,1,\cdots,2^{N_0}\}\}$ is increasing in $N_0$. Also,
\begin{align}
\forall\;\tau\in (0,t],\quad \1_{(\tau,t]}(T_0^{\bj})
&=\sum_{n=1}^{2^{N_0}}\1_{(a_{n-1},a_n]\cap (\tau,t]}(T_0^{\bj})=
\sum_{n=1}^{2^{N_0}}\1_{\{\tau\leq a_n\}}\1_{(a_{n-1}\vee \tau,a_n]}(T_0^{\bj}),\label{discrete}
\end{align}
and whenever $\bj\neq \bi$,
\begin{align}\label{connector:aux1}
\E^{\bi}_{z_0}\left[\int_0^\infty \d L^{\bi}_{\tau}\1_{\{Z^{\bj}_{\tau}=0\}}\right]=0
\end{align}
by \eqref{def:SDE2-3} so that 
\begin{align}\label{connector:aux2}
\E_{z_0}^{\bi}\left[\int_0^{t}\d  L^{\bi}_{\tau}\e^{\beta_\bi \tau}K_0^{\bbeta,\bj}(\tau)\E^{\bj,\tau}_{\ms Z_{\tau}}\left[\e^{\beta_\bj (T_0^{\bj}-\tau)}F(\ms Z_{T_0^{\bj}},T_0^{\bj});T_0^{\bj}=\tau\right]\right]=0.
\end{align}
By \eqref{discrete} and \eqref{connector:aux2}, the following semi-discretization of the left-hand side of \eqref{connector:20} holds:
\begin{align}
&\quad \;\E_{z_0}^{\bi}\left[\int_0^{t}\d  L^{\bi}_{\tau}\e^{\beta_\bi \tau}K_0^{\bbeta,\bj}(\tau)\E^{\bj,\tau}_{\ms Z_{\tau}}\left[\e^{\beta_\bj (T_0^{\bj}-\tau)}F(\ms Z_{T_0^{\bj}},T_0^{
\bj});T_0^{\bj}\leq  t\right]
\right]\notag\\
\begin{split}\label{conn:approx}
&=\sum_{n=1}^{2^{N_0}}\E_{z_0}^{\bi}\Bigg[\int_0^{a_n}\d  L^{\bi}_{\tau}\e^{\beta_\bi \tau}K_0^{\bbeta,\bj}(\tau)\E^{\bj,\tau}_{\ms Z_{\tau}}\left[\e^{\beta_\bj (T_0^{\bj}-\tau)}F(\ms Z_{T_0^{
\bj}},T_0^{
\bj});a_{n-1}\vee \tau<T_0^{\bj}\leq a_n\Bigg]
\right].
\end{split}
\end{align}

To use \eqref{conn:approx}, we need some perturbations of time for the right-hand side of \eqref{conn:approx}. The specific forms of the perturbations are in the following lemma. 

\begin{lem}\label{lem:connectorapprox}
Fix $t\in (0,\infty)$, $\bi\neq \bj$, and let $z_0\in \CNwi$.\medskip

\noindent {\rm (1$\cc$)} It holds that 
\begin{align}\label{connector:lim}
\lim_{\delta\searrow 0}\sup_{0\leq \tau\leq t}\E_{z_0}^{\bi}\left[\int_{0}^{t}\d  L^{\bi}_{\tau}K_0^{\bbeta,\bj}(\tau)\P^{\bj}_{\ms Z_{\tau}}(\tau< T_0^{\bj}\leq\tau+\delta)\right]=0.
\end{align}

\noindent {\rm (2$\cc$)} It holds that
\begin{align}
&\quad \;\E_{z_0}^{\bi}\left[\int_0^{t}\d  L^{\bi}_{\tau}\e^{\beta_\bi \tau}K_0^{\bbeta,\bj}(\tau)\E^{\bj,\tau}_{\ms Z_{\tau}}\left[\e^{\beta_\bj (T_0^{\bj}-\tau)}F(\ms Z_{T_0^{\bj}},T_0^{
\bj});T_0^{\bj}\leq  t\right]
\right]\notag\\
\begin{split}
&=\lim_{N_0\to\infty}\sum_{n=2}^{2^{N_0}} \E_{z_0}^{\bi}\Bigg[\int_0^{a_{n-1}}\d  L^{\bi}_{\tau}\e^{\beta_\bi \tau}K_0^{\bbeta,\bj}(\tau)\\
&\quad \times\E^{\bj,\tau}_{\ms Z_{\tau}}\left[\e^{\beta_\bj (a_{n-1}-\tau)}\E^{\bj}_{\ms Z_{a_{n-1}}}\left[\e^{\beta_\bj T_0^{\bj}}F(\ms Z_{T_0^{
\bj}},T_0^{
\bj});T_0^{\bj}\leq a_1\right];a_{n-1}<T_0^{\bj}\right]\Bigg]
\label{hope}
\end{split}
\end{align}
for all bounded $F\in \B_+(\CN\times [0,\infty))$.
\end{lem}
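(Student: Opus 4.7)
\textbf{Proof plan for Lemma~\ref{lem:connectorapprox}.}

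For part (1$\cc$), the central observation is that on the support $\{Z^{\bi}_\tau=0\}$ of $\d L^{\bi}_\tau$, the coordinate $Z^{\bj}_\tau$ is almost surely nonzero for $\bj\neq\bi$ by \eqref{connector:aux1}, and $\ms Z_\tau$ restricted to this set is a measurable function of the auxiliary Brownian motions $W^{\bi'}$ and $\{W^k\}_{k\notin\bi}$, which are independent of the relative motion $\{Z^{\bi}_s\}$ by \eqref{def:SDE2-3}. I would first condition on these auxiliary Brownian motions to treat the random integrand as deterministic in the $\bi$-fiber, then invoke the DY formula \eqref{DY:law2-3} (case $z^0=z_0^{\bi}=0$) to convert the expected local-time integral into a Riemann integral against the explicit density $\e^{-\beta_{\bi}\tau}\s^{\beta_{\bi}}(\tau)/(4\pi)$. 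The remaining integrand features $K_0^{\bbeta,\bj}(\tau)\P^{\bj}_{\ms Z_\tau}(\tau<T_0^{\bj}\leq\tau+\delta)$ evaluated at $\ms Z_\tau$ with $Z^{\bj}_\tau\neq 0$, and the probability factor vanishes pointwise as $\delta\searrow 0$ by absolute continuity of the hitting-time distribution at nonzero starting points for the 2D Bessel-like component under $\P^{\bj}$. Dominated convergence---using that $K_0^{\bbeta,\bj}(\tau)$ has at worst an integrable logarithmic singularity against the density above---then yields the required uniform limit.

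For part (2$\cc$), I would begin from the semi-discretization \eqref{conn:approx} and split each summand's local-time integral as $\int_0^{a_n}=\int_0^{a_{n-1}}+\int_{a_{n-1}}^{a_n}$. For the first piece, on $\{\tau\leq a_{n-1}<T_0^{\bj}\leq a_n\}$, the strong Markov property of the stochastic one-$\bj$ motion at the deterministic time $a_{n-1}$ under $\P^{\bj,\tau}_{\ms Z_\tau}$ produces an identity of the form
\[
\E^{\bj,\tau}_{\ms Z_\tau}\bigl[\e^{\beta_{\bj}(T_0^{\bj}-\tau)}F(\ms Z_{T_0^{\bj}},T_0^{\bj});a_{n-1}<T_0^{\bj}\leq a_n\bigr]=\E^{\bj,\tau}_{\ms Z_\tau}\bigl[\e^{\beta_{\bj}(a_{n-1}-\tau)}G_n(\ms Z_{a_{n-1}});a_{n-1}<T_0^{\bj}\bigr],
\]
with $G_n(z)\defeq\E^{\bj}_{z}[\e^{\beta_{\bj}T_0^{\bj}}F(\ms Z_{T_0^{\bj}},T_0^{\bj});T_0^{\bj}\leq a_1]$; this reproduces the nested structure on the right-hand side of \eqref{hope}. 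For the second piece, $\tau\in(a_{n-1},a_n]$ forces $T_0^{\bj}-\tau\in(0,a_n-a_{n-1}]=(0,t\cdot 2^{-N_0}]$, and after summing over $n$ this remainder is controlled by the quantity estimated in part (1$\cc$) with $\delta=t\cdot 2^{-N_0}$, hence vanishes as $N_0\to\infty$.

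The principal subtlety is the careful bookkeeping between the shifted probability $\P^{\bj,\tau}_{\ms Z_\tau}$---under which $T_0^{\bj}\geq\tau$ is a wall-clock time---and the unshifted $\P^{\bj}_{\ms Z_{a_{n-1}}}$---under which the first hitting time is counted from the post-restart origin. Verifying that the exponential weight $\e^{\beta_{\bj}(T_0^{\bj}-\tau)}$ telescopes correctly into $\e^{\beta_{\bj}(a_{n-1}-\tau)}\cdot\e^{\beta_{\bj}T_0^{\bj}}$ across the restart is the algebraic heart of the Markov step, and handling the joint randomness of $\ms Z_\tau$, $\d L^{\bi}_\tau$, and the post-restart $\bj$-motion requires a double-conditioning argument. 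Combined with a clean application of part (1$\cc$) to discard the short-window remainders, this produces the claimed identity \eqref{hope}.
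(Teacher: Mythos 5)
Your proposal reproduces the paper's argument in all essentials: for part (2$\cc$) you perform exactly the paper's decomposition of \eqref{conn:approx} into the $\int_0^{a_{n-1}}$ main terms, rewritten by the Markov property at the deterministic time $a_{n-1}$ (with the exponential weight split as $\e^{\beta_\bj(a_{n-1}-\tau)}\e^{\beta_\bj(T_0^{\bj}-a_{n-1})}$), plus the $\int_{a_{n-1}}^{a_n}$ remainders, which after summation are dominated by the quantity in part (1$\cc$) with $\delta=t2^{-N_0}$ and hence vanish. For part (1$\cc$), your conditioning-on-the-auxiliary-Brownian-motions plus DY-density route is only a minor variant of the paper's, which instead proves $\E^{\bi}_{z_0}[\int_0^t\d L^{\bi}_\tau K_0^{\bbeta,\bj}(\tau)]<\infty$ directly (using $z_0^{\bj}\neq 0$, the independence structure on $\{Z^{\bi}_\tau=0\}$ from \eqref{SDE:Z}, the $K_0$ asymptotics, and $\E^{\bi}_{z_0}[L^{\bi}_t]<\infty$) and then concludes by dominated convergence together with the continuity of the hitting-time distribution, i.e.\ the same ingredients you invoke.
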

\begin{proof}
\noindent {\bf (1$\cc$)} Note that under $\P^{\bi}_{z_0}$ for any $z_0\in \CN$,
the equations of $\{Z^\bj_t\}$ from \eqref{def:SDE2-3} for $\bj\neq \bi$ can be written as follows where $\bi=(i\prime,i)$ and $\bj=(j\prime,j)$: 
\begin{equation}
\begin{cases}\label{SDE:Z}
\hfill Z^{(j,i\prime)}_t\displaystyle 
=\left(Z_0^{(j,i\prime)}+\frac{Z_0^{\bi}}{2}\right)-\frac{Z^{\bi}_t}{2}+
\frac{\two W^j_t-W^{\bi\prime}_t}{2},& j>i\prime,\\
\vspace{-.4cm}\\
\hfill  \displaystyle Z^{(i\prime,j)}_t\displaystyle =\left(Z_0^{(i\prime,j)}-\frac{Z_0^{\bi}}{2}\right)+\frac{Z^{\bi}_t}{2}+
\frac{W^{\bi\prime}_t-\two W^j_t}{2},& j<i\prime,\;j\neq i,\\
\vspace{-.4cm}\\
\hfill  \displaystyle Z^{(j,i)}_t\displaystyle 
=\left(Z_0^{(j,i)}-\frac{Z_0^{\bi}}{2}\right)+\frac{Z^{\bi}_t}{2}+
\frac{\two W^j_t-W^{\bi\prime}_t}{2},& j>i,\;j\neq i\prime,\\
\vspace{-.4cm}\\
\hfill  \displaystyle Z^{(i,j)}_t\displaystyle 
=\left(Z_0^{(i,j)}+\frac{Z_0^{\bi}}{2}\right)-\frac{Z^{\bi}_t}{2}+
\frac{W^{\bi\prime}_t-\two W^j_t}{2}, &j<i,\\
\vspace{-.4cm}\\
\hfill \displaystyle Z^{\bj}_t=Z^\bj_0+\frac{W^{j\prime}_t-W^j_t}{\two},& \bj\cap\bi=\varnothing. 
\end{cases}
\end{equation}
For example, when $j>i\prime$, 
\begin{align*}
Z^{(j,i\prime)}_t&=\frac{Z^j_t-Z^{i\prime}_t}{\two}=\frac{(z_0^j+W^j_t)-\frac{(z_0^{\bi\prime}+W_t^{\bi\prime})+Z^\bi_t}{\two}}{\two}=\left(Z_0^{(j,i\prime)}+\frac{Z_0^{\bi}}{2}\right)-\frac{Z^{\bi}_t}{2}+
\frac{\two W^j_t-W^{\bi\prime}_t}{2}.
\end{align*}

For the proof of Lemma~\ref{lem:connectorapprox} (1$\cc$), we only use \eqref{SDE:Z} for $z_0\in \CNwi$.
It suffices to show, for any $t_1>t$,  
\begin{align}\label{EL:continuity}
s\mapsto \E^{\bi}_{z_0}\left[\int_0^t \d L^{\bi}_\tau K_0^{\bbeta,\bj}(\tau)\P^{\bj}_{\ms Z_{\tau}}(T_0^\bj\leq s)\right]\in \C([0,t_1]).
\end{align}
Moreover, by the dominated convergence theorem, \eqref{connector:aux1} and the continuity of $t\mapsto \P^{\bj}_{z_1}(T_0^{\bj}\leq t)$ whenever $z^1\in \CN$ satisfies  $ z^\bj_1\neq 0$ \cite[(2.9)]{DY:Krein-3}, \eqref{EL:continuity} holds as soon as we have
\begin{align}\label{EL:continuity1}
\E^{\bi}_{z_0}\left[\int_0^t \d L^{\bi}_\tau K_0^{\bbeta,\bj}(\tau)\right]<\infty.
\end{align}

To obtain \eqref{EL:continuity1}, we use \eqref{SDE:Z} and the condition $z^\bi_0=0$ by $z_0\in \CNwi$
that whenever $w_\bj>0$ and $Z^{\bi}_{\tau}=0$,
$K_0^{\bbeta,\bj}(\tau)
=K_0(\sqrt{2\beta_\bj}|z_0^{\bj}+\sigma \widetilde{W}_{\tau}|)$.
Here, $\sigma>0$ is a constant, $\widetilde{W}$ is a two-dimensional standard Brownian motion with $\widetilde{W}_0=0$ and independent of $\{Z_t^{\bi}\}$, and $z_0^{\bj}\neq 0$ by the assumption $z_0\in \CNwi$ and $w_\bj>0$. 
Therefore, by the asymptotic representations  \eqref{K00-3}  and \eqref{K0infty-3} of $K_0(x)$ as $x\to\infty$ and as $x\to 0$, we get, for all $\bi\neq \bj$ with $w_\bj>0$, 
\begin{align}
&\quad\; \E^{\bi}_{z_0}\left[\int_0^t \d L^{\bi}_\tau K_0^{\bbeta,\bj}(\tau)\right]\notag\\
&\leq C(z_0,\beta_\bj)\E_{z_0}^{\bi}\biggl[\int_0^t \d L^{\bi}_\tau\biggl(
\E\biggl[\big|\log |z_0^{\bj}+\sigma \widetilde{W}_{\tau}|\big|;|z_0^{\bj}+\sigma \widetilde{W}_{\tau}|\leq \frac{|z_0^{\bj}|}{2}\biggr]+1\biggr)\biggr]<\infty.\label{LT:continuity1}
\end{align}
Here, the last inequality holds since $\E^\bi_{z_0}[L^\bi_t]<\infty$ by \cite[Theorem~2.1 (3$\cc$) and Proposition~4.5 (2$\cc$)]{C:SDBG1-3} and
\begin{align*}
\E\biggl[\big|\log |z_0^{\bj}+\sigma \widetilde{W}_{\tau}|\big|;|z_0^{\bj}+\sigma \widetilde{W}_{\tau}|\leq \frac{|z_0^{\bj}|}{2}\biggr]&=\int_{|z|\leq |z_0^{\bj}|/2}\big|\log |z|\big|\cdot \frac{1}{2\pi\sigma \tau}\exp\biggl(-\frac{|z-z_0^{\bj}|^2}{2\sigma\tau}\biggr)\d z\\
&\leq \int_{|z|\leq |z_0^{\bj}|/2}\big|\log |z|\big|\cdot \frac{1}{2\pi \sigma \tau}\exp\biggl(-\frac{|z_0^{\bj}|^2}{8\sigma\tau}\biggr)\d z,
\end{align*}
which is bounded in $\tau\in (0,\infty)$ by $z_0^{\bj}\neq 0$. 
We have proved Lemma~\ref{lem:connectorapprox} (1$\cc$).\medskip 

\noindent {\bf (2$\cc$)} We start by rewriting \eqref{conn:approx}:
\begin{align}
&\quad \;\E_{z_0}^{\bi}\left[\int_0^{t}\d  L^{\bi}_{\tau}\e^{\beta_\bi \tau}K_0^{\bbeta,\bj}(\tau)\E^{\bj,\tau}_{\ms Z_{\tau}}\left[\e^{\beta (T_0^{\bj}-\tau)}F(\ms Z_{T_0^{\bj}},T_0^{
\bj});T_0^{\bj}\leq  t\right]
\right]\notag\\
&=\sum_{n=1}^{2^{N_0}}\E_{z_0}^{\bi}\biggl[\int_0^{a_{n-1}}\d  L^{\bi_\bi}_{\tau}\e^{\beta_\bi \tau}K_0^{\bbeta,\bj}(\tau)\E^{\bj,\tau}_{\ms Z_{\tau}}\biggl[\e^{\beta_\bj (T_0^\bj-\tau)}F(\ms Z_{T_0^{
\bj}},T_0^{
\bj});a_{n-1}<T_0^{\bj}\leq a_n\biggr]
\biggr]\notag\\
&\quad+\sum_{n=1}^{2^{N_0}}R_{n},\label{connector:approx4}
\end{align}
where
\begin{align*}
R_{n}&\,\defeq\,\E_{z_0}^{\bi}\biggl[\int_{a_{n-1}}^{a_n}\d  L^{\bi}_{\tau}\e^{\beta_\bi \tau}K_0^{\bbeta,\bj}(\tau)\E^{\bj,\tau}_{\ms Z_{\tau}}\biggl[\e^{\beta_\bj (T_0^{\bj}-\tau)}F(\ms Z_{T_0^{
\bj}},T_0^{
\bj});a_{n-1}\vee \tau<T_0^{\bj}\leq a_n\biggr]
\biggr],
\end{align*}
and the equality in \eqref{connector:approx4} holds also because $a_{n-1}\vee\tau=a_{n-1}$ for all $0\leq \tau\leq a_{n-1}$. 
These terms $R_{n}$ satisfy the following bounds:
\begin{align*}
|R_{n}|&\leq C(t,F,\beta_\bi,\beta_\bj)\E_{z_0}^{\bi}\biggl[\int_{a_{n-1}}^{a_n}\d  L^{\bi}_{\tau}K_0^{\bbeta,\bj}(\tau)\P^{\bj,\tau}_{\ms Z_{\tau}}\left( \tau<T_0^{\bj}\leq \tau+\frac{t}{2^{N_0}}\right)\biggr].
\end{align*}
Hence, by (1$\cc$),
$\lim_{N_0\to\infty}\sum_{n=1}^{2^{N_0}}R_{n}=0$.

Let us finish off the proof of (2$\cc$). 
By \eqref{connector:approx4}, we obtain
\begin{align*}
&\quad\;\E_{z_0}^{\bi}\left[\int_0^{t}\d  L^{\bi}_{\tau}\e^{\beta_\bi \tau}K_0^{\bbeta,\bj}(\tau)\E^{\bj,\tau}_{\ms Z_{\tau}}\left[\e^{\beta_\bj (T_0^{\bj}-\tau)}F(\ms Z_{T_0^{\bj}},T_0^{
\bj});T_0^{\bj}\leq  t\right]
\right]\\
&=\lim_{N_0\to\infty}\sum_{n=1}^{2^{N_0}}\E_{z_0}^{\bi}\biggl[\int_0^{a_{n-1}}\d  L^{\bi_\bi}_{\tau}\e^{\beta_\bi \tau}K_0^{\bbeta,\bj}(\tau)\E^{\bj,\tau}_{\ms Z_{\tau}}\biggl[\e^{\beta_\bj (T_0^\bj-\tau)}F(\ms Z_{T_0^{
\bj}},T_0^{
\bj});a_{n-1}<T_0^{\bj}\leq a_n\biggr]
\biggr]\\
&=\lim_{N_0\to\infty}\sum_{n=2}^{2^{N_0}}\E_{z_0}^{\bi}\biggl[\int_0^{a_{n-1}}\d  L^{\bi_\bi}_{\tau}\e^{\beta_\bi \tau}K_0^{\bbeta,\bj}(\tau)\E^{\bj,\tau}_{\ms Z_{\tau}}\biggl[\e^{\beta_\bj (T_0^\bj-\tau)}F(\ms Z_{T_0^{
\bj}},T_0^{
\bj});a_{n-1}<T_0^{\bj}\leq a_n\biggr]
\biggr]\\
&=\lim_{N_0\to\infty}\sum_{n=2}^{2^{N_0}} \E_{z_0}^{\bi}\biggl[\int_0^{a_{n-1}}\d  L^{\bi}_{\tau}\e^{\beta_\bi \tau}K_0^{\bbeta,\bj}(\tau)\\
&\quad \times\E^{\bj,\tau}_{\ms Z_{\tau}}\biggl[\e^{\beta_\bj (a_{n-1}-\tau)}\E^{\bj}_{\ms Z_{a_{n-1}}}\left[\e^{\beta_\bj T_0^{\bj}}F(\ms Z_{T_0^{
\bj}},T_0^{
\bj});T_0^{\bj}\leq a_1\right];a_{n-1}<T_0^{\bj}\biggr]\biggr].
\end{align*}
Here, we drop the summand indexed by $n=1$ to get the second equality, which can be justified by using (1$\cc$); the third equality uses the Markov property.  
The proof of (2$\cc$) is complete. 
\end{proof}

\begin{proof}[Proof of Proposition~\ref{prop:connector}]
By Lemma~\ref{lem:connectorapprox} (2$\cc$), it suffices to prove that 
\begin{align}
&\quad\lim_{N_0\to\infty}\sum_{n=2}^{2^{N_0}} \E_{z_0}^{\bi}\biggl[\int_0^{a_{n-1}}\d  L^{\bi}_{\tau}\e^{\beta_\bi \tau}K_0^{\bbeta,\bj}(\tau)\notag\\
&\quad \times\E^{\bj,\tau}_{\ms Z_{\tau}}\left[\e^{\beta_\bj (a_{n-1}-\tau)}\E^{\bj}_{\ms Z_{a_{n-1}}}\left[\e^{\beta_\bj T_0^{\bj}}F(\ms Z_{T_0^{
\bj}},T_0^{
\bj});T_0^{\bj}\leq a_1\right];a_{n-1}<T_0^{\bj}\right]\biggr]\notag\\
&=\frac{w_\bi}{w_\bj}
\E^{\bbeta,\bs w,\bi}_{z_0}\Bigg[\frac{\e^{{A}^{\bbeta,\bs w}_0(T_0^{\bi,1})}}{2}F(\ms Z_{T_0^{\bi,1}},T_0^{\bi,1});\left\{
\begin{array}{cc}
T_0^{\bi,1}\leq t,\\
\ms Z_{T_0^{\bi,1}}\in \CNwj
\end{array}
\right\}
\Bigg].\label{goal:connector}
\end{align}

Since $\P^\bj(Z^\bk_s\neq 0,\;\forall\;\bk\in \mc E_N)=1$ for all $s>0$ by the existence of the probability density of $Z^\bj_s$ \cite[Theorem~2.1 (1$\cc$)]{C:SDBG1-3}, 
by \eqref{DY:com-3},
the sums on the left-hand side of \eqref{goal:connector} satisfies 
\begin{align}
&\;\quad \sum_{n=2}^{2^{N_0}} \E_{z_0}^{\bi}\Bigg[\int_0^{a_{n-1}}\d  L^{\bi}_{\tau}\e^{\beta_\bi \tau}K_0^{\bbeta,\bj}(\tau)\notag\\ &\quad\times \E^{\bj,\tau}_{\ms Z_{\tau}}\left[\e^{\beta_\bj (a_{n-1}-\tau)}\E^{\bj}_{\ms Z_{a_{n-1}}}\left[\e^{\beta_\bj T_0^{\bj}}F(\ms Z_{T_0^{
\bj}},T_0^{\bj});T_0^{\bj}\leq a_1\right];a_{n-1}<T_0^{\bj}\right]\Bigg]\notag\\
&= \sum_{n=2}^{2^{N_0}} \E_{z_0}^{\bi}\left[\int_0^{a_{n-1}}\d  L^{\bi}_{\tau}\e^{\beta_\bi \tau}K_0^{\bbeta,\bj}(\tau)\E^{(0)}_{\ms Z_{\tau}}\left[\frac{K_0^{\bbeta,\bj}(a_{n-1}-\tau)}{K^{\bbeta,\bj}_0(0)}\E^{\bj}_{ \ms Z_{a_{n-1}-\tau}}\big[\e^{\beta_\bj T_0^{\bj}}F(\ms Z_{T_0^{
\bj}},T_0^{\bj});T_0^{\bj}\leq a_1\big]\right]\right]\notag\\
&= \sum_{n=2}^{2^{N_0}} \E_{z_0}^{\bi}\left[\int_0^{a_{n-1}}\d  L^{\bi}_{\tau}\e^{\beta_\bi \tau}\E^{(0)}_{\ms Z_{\tau}}\left[K_0^{\bbeta,\bj}(a_{n-1}-\tau)\E^{\bj}_{\ms Z_{a_{n-1}-\tau}}\big[\e^{\beta_\bj T_0^{\bj}}F(\ms Z_{T_0^{
\bj}},T_0^{\bj});T_0^{\bj}\leq a_1\big]\right]\right]\notag\\
&= \sum_{n=2}^{2^{N_0}} \E_{z_0}^{\bi}\left[\frac{\e^{\beta_\bi a_{n-1}}}{2K_0^{\bbeta,\bi}(a_{n-1})}K_0^{\bbeta,\bj}(a_{n-1})
\E^{\bj}_{\ms Z_{a_{n-1}}}\big[\e^{\beta_\bj T_0^{\bj}}F(\ms Z_{T_0^{
\bj}},T_0^{\bj});T_0^{\bj}\leq a_1\big]\right],\label{connector:int}
\end{align}
where the second equality performs the cancellation of $K^{\bbeta,\bj}_0(\tau)$ under $\P^{\bi}_{z_0}$ and
$ 1/K^{\bbeta,\bj}_0(0)$ under $\P^{(0)}_{\ms Z_\tau}$, which can be justified by \eqref{connector:aux1}, and the last equality uses
 \eqref{exp:LT-3} in the case of $z^{\bi}_0=0$.

The next step is to transform all the expectations on the right-hand side of \eqref{connector:int} into expectations under $\P^{\bbeta,\bs w,\bi}$. To this end, we write the right-hand side of \eqref{connector:int}  as 
\begin{align}
&\;\quad  \sum_{n=2}^{2^{N_0}} \E_{z_0}^{\bi}\left[\frac{\e^{\beta_\bi a_{n-1}}}{2K_0^{\bbeta,\bi}(a_{n-1})}K_0^{\bbeta,\bj}(a_{n-1})
\E^{\bj}_{\ms Z_{a_{n-1}}}\big[\e^{\beta_\bj T_0^{\bj}}F(\ms Z_{T_0^{\bj}},T_0^{\bj});T_0^{\bj}\leq a_1\big]\right]\notag\\
&= \sum_{n=2}^{2^{N_0}} \E_{z_0}^{\bi}\biggl[\frac{\e^{\beta_\bi a_{n-1}}}{2K_0^{\bbeta,\bi}(a_{n-1})}K_0^{\bbeta,\bj}(a_{n-1})
\frac{K_0^{\bbeta,\bs w}(a_{n-1})}{w_\bj K_0^{\bbeta,\bj}(a_{n-1})} \E^{\bbeta,\bs w}_{\ms Z_{a_{n-1}}}\left[\e^{{A}^{\bbeta,\bs w}_0(T_0^\bj)}F(\ms Z_{T_0^{\bj}},T_0^{\bj});T_0^{\bs w}=T_0^{\bj}\leq a_1\right]\biggr]\notag
\end{align}
which follows from \eqref{Pinhomo:id-3}
since for any fixed $s>0$, $\P^\bi(\ms Z_{s}\in \CNw)=1$, and we have also used \eqref{def:A-3} and \eqref{dec:finalA}. After some cancellations, the last equality gives
\begin{align}
&\;\quad  \sum_{n=2}^{2^{N_0}} \E_{z_0}^{\bi}\left[\frac{\e^{\beta_\bi a_{n-1}}}{2K_0^{\bbeta,\bi}(a_{n-1})}K_0^{\bbeta,\bj}(a_{n-1})
\E^{\bj}_{\ms Z_{a_{n-1}}}\big[\e^{{A}^{\bbeta,\bs w}_0(T_0^\bj)}F(\ms Z_{T_0^{\bj}},T_0^{\bj});T_0^{\bj}\leq a_1\big]\right]\notag\\
&= \frac{1}{w_\bj}\sum_{n=2}^{2^{N_0}} \E_{z_0}^{\bi}\biggl[
\frac{\e^{\beta_\bi a_{n-1}}K_0^{\bbeta,\bs w}(a_{n-1})}{2K_0^{\bbeta,\bi}(a_{n-1})} \E^{\bbeta,\bs w}_{\ms Z_{a_{n-1}}}\left[\e^{{A}^{\bbeta,\bs w}_0(T_0^\bj)}F(\ms Z_{T_0^{\bj}},T_0^{\bj});T_0^{\bs w}=T_0^{\bj}\leq a_1\right]\biggr]\notag\\
&=\frac{w_\bi}{w_\bj} \sum_{n=2}^{2^{N_0}}
 \E_{z_0}^{\bbeta,\bs w,\bi}\biggl[\frac{\e^{{A}^{\bbeta,\bs w}_0(a_{n-1})}}{2}\E^{\bbeta,\bs w}_{\ms Z_{a_{n-1}}}\left[\e^{{A}^{\bbeta,\bs w}_0(T_0^\bj)}F(\ms Z_{T_0^{\bj}},T_0^{\bj});T_0^{\bs w}=T_0^{\bj}\leq a_1\right];a_{n-1}<T_0^{\bi,1}\biggr],\notag
 \end{align}
 where the last equality applies \eqref{def:Q21-3} with
$z_0\in \CNwi$, in which case $w_\bi K^{\bbeta,\bi}_0(0)/K^{\bbeta,\bs w}_0(0)$ is defined to be $1$, and again, we have also used \eqref{def:A-3} and \eqref{dec:finalA}. To continue from the last equality, 
we use the Markov property at time $a_{n-1}$ and the fact that $a_{n-1}+a_1=a_n$ for all $2\leq n\leq 2^{N_0}$ so that 
 \begin{align}
 &\;\quad  \sum_{n=2}^{2^{N_0}} \E_{z_0}^{\bi}\left[\frac{\e^{\beta_\bi a_{n-1}}}{2K_0^{\bbeta,\bi}(a_{n-1})}K_0^{\bbeta,\bj}(a_{n-1})
\E^{\bj}_{\ms Z_{a_{n-1}}}\big[\e^{\beta_\bj T_0^{\bj}}F(\ms Z_{T_0^{\bj}},T_0^{\bj});T_0^{\bj}\leq a_1\big]\right]\notag\\
 \begin{split}
&= \frac{w_\bi}{w_\bj}\sum_{n=2}^{2^{N_0}}
 \E_{z_0}^{\bbeta,\bs w,\bi}\Bigg[\frac{\e^{{A}^{\bbeta,\bs w}_0(T_0^{\bi,1})}}{2}F(\ms Z_{T_0^{\bi,1}},T_0^{\bi,1}); \left\{
 \begin{array}{cc}
 a_{n-1}<T_0^{\bi,1}=T_0^{\bj}\leq a_{n},\\
 Z^{\bi}_t\neq 0,\forall\;t\in [a_{n-1},T_0^{\bi,1}]
 \end{array}
 \right\}
 \Bigg].\label{connector:int1}
 \end{split}
\end{align}
Here, we use the event $\{Z^{\bi}_t\neq 0,\;\forall\;t\in [a_{n-1},T_0^{\bi,1}]\}$  to make up the difference between $T_0^{\bi\complement}=T_0^{\bi,1}$ and $T_0^{\mc E_N}$ when applying the Markov property. Note that since $\P^{\bbeta,\bs w,\bi}_{z_0}(T_0^1>0)=1$, we can add the term with $n=1$ to the right-hand side of \eqref{connector:int1} without changing the equality when taking the limit.
Hence, by \eqref{hope}, \eqref{connector:int} and \eqref{connector:int1},
\begin{align}
&\quad\;\E_{z_0}^{\bi}\left[\int_0^{t}\d  L^{\bi}_{\tau}\e^{\beta_\bi \tau}K_0^{\bbeta,\bj}(\tau)\E^{\bj,\tau}_{\ms Z_{\tau}}\left[\e^{\beta_\bj (T_0^{\bj}-\tau)}F(\ms Z_{T_0^{\bj}},T_0^{
\bj});T_0^{\bj}\leq  t\right]
\right]\notag\\
&=\lim_{N_0\to\infty}\frac{w_\bi}{w_\bj}\sum_{n=1}^{2^{N_0}}
 \E_{z_0}^{\bbeta,\bs w,\bi}\Bigg[\frac{\e^{{A}^{\bbeta,\bs w}_0(T_0^{\bi,1})}}{2}F(\ms Z_{T_0^{\bi,1}},T_0^{\bi,1}); \left\{
 \begin{array}{cc}
 a_{n-1}<T_0^{\bi,1}=T_0^{\bj}\leq a_{n},\\
 Z^{\bi}_t\neq 0,\forall\;t\in [a_{n-1},T_0^{\bi,1}]
 \end{array}
 \right\}
 \Bigg].\label{connector:int2}
\end{align}

Finally, we show that 
\begin{align}
&\quad\lim_{N_0\to\infty} \frac{w_\bi}{w_\bj}\sum_{n=1}^{2^{N_0}}
 \E_{z_0}^{\bbeta,\bs w,\bi}\Bigg[\frac{\e^{{A}^{\bbeta,\bs w}_0(T_0^{\bi,1})}}{2}F(\ms Z_{T_0^{\bi,1}},T_0^{\bi,1}); \left\{
 \begin{array}{cc}
 a_{n-1}<T_0^{\bi,1}=T_0^{\bj}\leq a_{n},\\
 Z^{\bi}_t\neq 0,\forall\;t\in [a_{n-1},T_0^{\bi,1}]
 \end{array}
 \right\}
 \Bigg]\notag\\
 &=\frac{w_\bi}{w_\bj}
\E^{\bbeta,\bs w,\bi}_{z_0}\Bigg[\frac{\e^{{A}^{\bbeta,\bs w}_0(T_0^{\bi,1})}}{2}F(\ms Z_{T_0^{\bi,1}},T_0^{\bi,1});\left\{
\begin{array}{cc}
T_0^{\bi,1}\leq t,\\
\ms Z_{T_0^{\bi,1}}\in \CNwj
\end{array}
\right\}
\Bigg].\label{connector:int3}
\end{align}
Note that $\{(a_{n-1},a_n];1\leq n\leq 2^{N_0}\}=\{(a_{n-1}(N_0),a_n(N_0)];1\leq n\leq 2^{N_0}\}$ covers $(0,t]$,
$\lim_{N_0\to\infty}\max_{1\leq n\leq 2^{N_0}}(a_n-a_{n-1})=0$
by \eqref{def:an}, and $\{a_n(N_0);n\in \{0,\cdots, 2^{N_0}\}\}$ is increasing in $N_0$. Hence, the sequence of events 
\[
\bigcup_{n=1}^{2^{N_0}}\Big\{a_{n-1}<T_0^{\bi,1}=T_0^{\bj}\leq a_{n},\; Z^{\bi}_t\neq 0,\forall\;t\in [a_{n-1},T_0^{\bi,1}]\Big\},\quad N_0\in \Bbb N,
\]
is increasing in $N_0$ to 
\[
\left\{ 
0< T_0^{\bi,1}=T_0^{\bj}\leq t,\; \exists\;\delta\in (0,t)\mbox{ s.t. }Z^{\bi}_t\neq 0,\;\forall\;t\in (T_0^{\bi,1}-\delta,T_0^{\bi,1}]
\right\},
\]
which is $\P^{\bbeta,\bs w,\bi}_{z_0}$-a.s. equal to $\{0<T_0^{\bi,1}=T_0^{\bj}\leq t\}$  by the NTC of the stochastic many-$\delta$ motions \cite[Proposition~3.15 (7$\cc$)]{C:SDBG2-3}. Hence, \eqref{connector:int3} holds by monotone convergence. Combining \eqref{connector:int2} and \eqref{connector:int3} gives the required identity \eqref{connector:20}. The proof is complete.
\end{proof}


\begin{thebibliography}{00}
\bibitem{AGHH:2D-3}
{\sc Albeverio, S.}, {\sc Gesztesy, F.}, {\sc H\o{}egh-Krohn, R.} and {\sc Holden, H.} (1987).
Point interactions in two dimensions: Basic properties, approximations
  and applications to solid state physics.
{\em Journal f\"ur die reine und angewandte Mathematik} {\bf 380} 87--107.
\href{https://doi.org/10.1515/crll.1987.380.87}{\texttt{doi:10.1515/crll.1987.380.87}}.

\bibitem{BC:1D-3}
{\sc Bertini, L.} and {\sc Cancrini, N.} (1995).
The stochastic heat equation: Feynman-Kac formula and intermittence.
{\em Journal of Statistical Physics} {\bf 78} 1377–1401.
\href{https://doi.org/10.1007/BF02180136}{\texttt{doi:10.1007/BF02180136}}.

\bibitem{CSZ:Mom-3}
{\sc Caravenna, F.}, {\sc Sun, R.} and {\sc Zygouras, N.} (2019).
On the moments of the $(2+1)$-dimensional directed polymer and
  stochastic heat equation in the critical window.
{\em Communications in Mathematical Physics} {\bf 372} 385--440.
\href{https://doi.org/10.1007/s00220-019-03527-z}{\texttt{doi:10.1007/s00220-019-03527-z}}.

\bibitem{C:DBG-3}
{\sc Chen, Y.-T.} (2024). Delta-Bose gas from the viewpoint of the two-dimensional stochastic heat equation. \emph{Annals of Probability} {\bf 52} 127--187. \href{https://doi.org/10.1214/23-AOP1649}{\texttt{doi:10.1214/23-AOP1649}}.

\bibitem{C:BES-3}
{\sc Chen, Y.-T.} (2022+). Two-dimensional delta-Bose gas: skew-product relative motions. To appear in \emph{Annals of Applied Probability}, available at \href{https://arxiv.org/abs/2207.06331}{\texttt{arXiv:2207.06331}}.

\bibitem{C:SDBG1-3}
{\sc Chen, Y.-T.} (2024+). Stochastic motions of the two-dimensional many-body delta-Bose gas, I: One-$\delta$ motions. Preprint.

\bibitem{C:SDBG2-3}
{\sc Chen, Y.-T.} (2024+). Stochastic motions of the two-dimensional many-body delta-Bose gas, II. Many-$\delta$ motions. Preprint.

\bibitem{DY:Krein-3}
{\sc Donati-Martin, C.} and {\sc Yor, M.} (2006). Some explicit Krein representations of certain subordinators, including the Gamma process. \emph{Publications of the Research Institute for Mathematical Sciences} {\bf 42}, 879--895. \href{https://doi.org/10.2977/PRIMS/1166642190}{\texttt{doi:10.2977/PRIMS/1166642190}}.

\bibitem{GQT-3}
{\sc Gu, Y.}, {\sc Quastel, J.} and {\sc Tsai, L.-T.} (2021).
Moments of the 2{D} {S}{H}{E} at criticality. \emph{Probability and Mathematical Physics} {\bf 2} 179--219.
\href{https://msp.org/pmp/2021/2-1/p05.xhtml}{\texttt{doi:10.2140/pmp.2021.2.179}}.

\bibitem{Lebedev-3}
{\sc Lebedev, N.~N.} (1972).
\newblock {\em Special Functions \& Their Applications}.
\newblock Dover Publication.

\end{thebibliography}
\end{document}